\author{Daichi Kohmoto}
\address{Nagoya University, Graduate School of Mathematics \\
Furo, Chikusa, Nagoya 464-8602, Japan}
\email{kohmoto.daichi@math.nagoya-u.ac.jp}
\keywords{higher Chow group, cycle class map, }
\newtheorem{proposition}[subsubsection]{Proposition} 
\newtheorem{theorem}[subsubsection]{Theorem}
\newtheorem{theorem3}{Theorem}
\newtheorem{lemma}[subsubsection]{Lemma}
\newtheorem{lemma3}[subsubsection]{Key-Lemma}
\theoremstyle{definition}
\def\Coker{{\rm{Coker}}}
\def\Hom{{\rm{Hom}}}
\def\Ext{{\rm{Ext}}}
\def\Spec{{\rm{Spec}}}
\def\x{{\mathcal{x}}}
\def\Ker{{\rm{Ker}}}
\def\HH{\mathbb{H}}
\def\Gal{{\rm{Gal}}}
\def\G{{\mathbb{G}}}
\def\mot{{\rm{mot}}}
\def\et{{\rm{\acute{e}t}}}
\def\Wet{{{\text{W-}}{\rm{\acute{e}t}}}}
\def\2tor{{{2 \overset{\_}{ } \rm{tor}}}}
\def\bX{{\overline{X}}}
\def\Pic{{\rm{Pic}}}
\def\Pic{{\rm{Pic}}}
\def\O{{\mathscr{O}}}
\def\F{{\mathbb{F}}}
\def\Q{{\mathbb{Q}}}
\def\Z{{\mathbb{Z}}}
\def\P{{\mathbb{P}}}
\def\ur{{\rm{ur}}}
\def\T{{\mathscr{T}}}
\def\crys{{\rm{crys}}}
\def\e{{\varepsilon}}
\def\rat{{\sf{rat}}}
\def\hom{{\sf{hom}}}
\def\num{{\sf{num}}}
\def\oF{{\overline{\mathbb{F}}}}
\def\rank{{\rm{rank}}}
\def\NS{{\rm{NS}}}
\def\P{{\mathbb{P}}}
\def\r{{\varrho}}
\def\mod{{\rm{mod}}}
\def\cont{{\rm{cont}}}
\def\ur{{\rm{ur}}}
\def\tors{{\rm{tors}}}
\def\Fr{{\sf{Fr}}}
\def\ord{{\rm{ord}}}
\def\hZ{{\widehat{\mathbb{Z}}}}
\def\cont{{\rm{cont}}}
\def\Br{{\rm{Br}}}
\def\oX{{\overline{X}}}
\begin{document}    

\centerline{\bf A generalization of the Artin-Tate formula for fourfolds}


\bigskip
\bigskip
\centerline{Daichi Kohmoto}

\bigskip
\bigskip{\small 
\begin{quotation}
\noindent
{\bf  \sc Abstract.} 
We give a new formula for the special value at $s=2$ of the Hasse-Weil zeta function for smooth projective fourfolds under some assumptions (the Tate \& Beilinson conjecture, finiteness of some cohomology groups, etc.). Our formula may be considered as a generalization of the Artin-Tate(-Milne) formula for smooth surfaces, and expresses the special zeta value almost exclusively in terms of inner geometric invariants such as higher Chow groups (motivic cohomology groups). Moreover we compare our formula with Geisser's formula for the same zeta value in terms of Weil-\'etale motivic cohomology groups, and as a consequence (under additional assumptions) 
we obtain some presentations of weight two Weil-\'etale motivic cohomology groups in terms of higher Chow groups and unramified cohomology groups. 
\end{quotation}}

\bigskip
\tableofcontents

\section{Introduction}

The Hasse-Weil zeta function is one of the most fundamental and important objects in arithmetic algebraic geometry. 
Let $X$ be a smooth projective variety of dimension $d$ defined over a finite field $\F_q$. 
This zeta function $\zeta (X,s)$ is defined for such $X$ by counting rational points of $X$:

\centerline{$\displaystyle \zeta(X,s):=Z(X,q^{-s})$ \ \ with \ \ $\displaystyle Z(X,t):=\exp \left( 
\sum_{n\geq 1} \frac{\# X(\F_{q^n})}{n} t^n \right),$}

\noindent
and is expected to contain much information about $X$ itself, which has both arithmetic and 
geometric aspects. 
Arithmetically, 
the Hasse-Weil zeta function is a natural generalization of 
the Riemann zeta function, and 
geometrically, the Weil conjecture 
asserts that this zeta function `{\em knows the topology}' of $X$. 
The Weil conjecture was proved by Grothendieck and Deligne (\cite{SGA4},\cite{Deligne1}), 
who developed the theory of \'etale cohomology to solve this conjecture. 
Since then, this powerful machinery has been a major tool in the study of Hasse-Weil zeta functions.  

In this paper, we are concerned with {\em a special value of Hasse-Weil zeta function} 
that we define as follows: let $n$ be a natural number, 
and let $\rho_n$ be the order of $\zeta(X,s)$ at $s=n$. Then  
we define the {\em special value of $\zeta(X,s)$ at $s=n$} as  
$$\zeta^*(X,n) := \lim_{s \to n} \zeta(X,s) (1-q^{n-s})^{-\rho_n}\ \ \ \ (0\leq n\leq d),$$
which is a non-zero rational number by the Weil conjecture. 

In the case $d\geq 2$, the following theorem due to Tate, Artin (\cite[Theorem 5.2]{Tate1}), and Milne (\cite[Theorem 6.1]{Milne0}) is the first 
striking result on the study of zeta values. 
Let $X$ be a smooth projective surface defined over a 
finite field $\F_q$ of characteristic $p>0$, and we denote $\overline{X} := X \otimes_{\F_q} \oF_q$. 
It is well-known that the torsion part of the Picard group 
$\Pic(X)_{\tors}$ is finite and that the map 
$\iota:\Pic(X) \to \Hom(\Pic(X),\Z)$ induced by the intersection pairing 
$\Pic(X)\times \Pic(X) \to \Z$ has finite kernel and cokernel. 
Assume one of the following equivalent conditions ($\star$) for a simple prime number $\ell \neq p$:
$$(\star) \left\{\begin{array}{l}
\displaystyle (1) \ \dim_{\Q_\ell} H_\et^2(\oX,\Q_\ell(1))^\Gamma = \rank_\Z \NS(X)\ {\rm for\ some\ }\ell\ {\rm where\ }\Gamma := \Gal(\oF_q/\F_q).\\
\displaystyle (2) \ \ell\text{-}{\rm torsion\ subgroup\ of\ the\ Brauer\ group\ }\Br(X)\{\ell\}\ {\rm is\ finite}.\\
\displaystyle (3) \ \rank_\Z \NS(X)\ {\rm is\ equal\ to\ the\ multiplicity\ of\ }q\ {\rm as\ a\ reciprocal\ root\ of\ }P_2(X,t)
\end{array}\right.$$
Here $\NS(X)$ denotes the N\'eron-Severi group of $X$. 
Then the Brauer group $\Br(X)=H_\et^2(X,\G_m)$ 
of $X$ is finite, and moreover the following 
formula holds:

\medskip
\noindent
$({\rm A})$ \hfill $\displaystyle \zeta^*(X,1)=(-1)^{S(1)} \cdot q^{\chi(X,\O_X)} \cdot 
\frac{ \left| \Pic(X)_\tors \right|^2}{(q-1)^2 \cdot D \cdot \left| \Br(X) \right|}.$ \hfill { }\\

\medskip
\noindent
Here $D$ is the order of the cokernel of the map $\iota: \Pic(X) \to \Hom(\Pic(X),\Z)$, 
and $S(1)$ and $\chi(X,\O_X)$ are the numbers defined by
$$S(1) := \sum_{a>4} \rho_{\frac{a}{2}}
,\ \ \ \rho_{\frac{a}{2}} := \ord_{s=\frac{a}{2}} \zeta (X,s),\ \ \ \ \ \chi(X,\O_X)=\sum_{0\leq j\leq 2} (-1)^j \dim_{\F_q} H^j(X,\O_X),$$
where $\displaystyle \rho_{\frac{a}{2}}$ is the order of $\zeta(X,s)$ at 
$\displaystyle s=a/2$, and $a$ runs through all integers at least three. 
The formula (A) is called {\em the Artin-Tate formula}. 
The sign $(-1)^{S(1)}$ is due to Kahn and Chambert-Loir, 
and the equivalence of the above 
three conditions $(\star)$ was proved by Tate (\cite[Theorem 5.1]{Tate1}). 

After this pioneering work, 
B\'ayer, Neukirch, Schneider 
and Milne generalized the Artin-Tate formula 
to arbitrary $d$-dimensional smooth projective variety $X$ and weight $n$ 
under the assumption 
of Tate's partial 
semisimplicity conjecture ${\boldsymbol{SS}^{\boldsymbol{n}}}(X,\ell)$ for all $\ell$ 
(see $\x$\ref{conjecture} for this conjecture) 
(\cite{BN}, \cite{Schneider}, and \cite[Theorem 0.1]{Milne1}). 
For a prime number $\ell$, let $H_\et^i(X,\Z_\ell(n))$ be the $\ell$-adic \'etale cohomology group defined as follows:
$$H_\et^i(X,\Z_\ell(n)) := \lim_{\stackrel{\longleftarrow}{{\small \nu}}} 
H_\et^i(X,\mu_{\ell^\nu}^{\otimes n})\ {\rm for\ }\ell \neq p,\ \ \ 
H^i_\et(X,\Z_p(n)) := \lim_{\stackrel{\longleftarrow}{{\small r \geq 1}}} H^{i-n}_\et (X,W_r\Omega^n_{X,\log}),$$
where $\mu_{\ell^\nu}$ denotes the \'etale sheaf of $\ell^\nu$-th roots of unity, and 
$W_r\Omega^n_{X,\log}$ is the \'etale sheaf of the logarithmic part of the Hodge-Witt sheaf 
$W_r\Omega^n_X$ (\cite{Illusie}). 
We define $H_\et^i(X,\hZ(n))$ as follows:
$$H^i_\et(X,\hZ(n)) := \prod_{{\rm all\ primes}\ \ell} H^i_\et(X,\Z_\ell(n)).$$
By celebrated theorems of Deligne (\cite[Th\'eor\`eme 1.6]{Deligne1}) 
and Gabber (\cite{Gabber}), 
$H_\et^i(X,\hZ(n))$ is finite for all $i\neq 2n,2n+1$, and 
$H_\et^{2n}(X,\hZ(n))_\tors$ is finite. 
Tate's partial semisimplicity conjecture $\boldsymbol{SS}^{\boldsymbol{n}}(X,\ell)$ for all $\ell$ implies the finiteness of $\Ker(\e^{2n})$ and $\Coker(\e^{2n})$ 
where $$\e^{2n} : H_\et^{2n}(X,\hZ(n)) \to H_\et^{2n+1}(X,\hZ(n))$$
is the cup product with the canonical element $1 \in \hZ \simeq H_\et^1(\Spec~\F_q,\hZ)$ 
(\cite[$\x$6]{Milne1}). 
Using these results, Milne gave the following general formula for $\zeta^*(X,n)$:
$$\zeta^*(X,n)=(-1)^{S(n)} \cdot q^{\chi(X,\O_X,n)} \cdot \prod_{i\neq 2n,2n+1} \left| H_\et^i(X,\hZ(n)) \right|^{(-1)^i} \cdot \frac{|\Ker(\e^{2n})|}{|\Coker(\e^{2n})|}.$$
Here $S(n)$ and $\chi(X,\O_X,n)$ are defined as 
$$S(n) := \sum_{a>2n} \rho_{\frac{a}{2}},\ \ \chi(X,\O_X,n) := \sum_{i,j} (-1)^{i+j} (n-j) \dim_{\F_q} H^i(X,\Omega^j)\ \ \ 
(0\leq i\leq d,\ 0\leq j\leq n),$$
and the presentation of $S(n)$ is due to Kahn and Chambert-Loir 
(\cite[Remarque 3.11]{Kahn}, \cite[Conjecture 66]{Kahn3}). 
The above general formula 
shows that zeta values at integers are expressed in terms of \'etale cohomology groups.
In this paper, we would like to find a formula for $\zeta^*(X,n)$ 
using inner geometric invariants of $X$, 
{\em i.e.,} higher Chow groups (motivic cohomology groups). 

The first result in this direction is due to Saito and Sato (\cite[Theorem B.1]{SaitoSato}). 
They proved a formula for $\zeta^*(X,2)$ for a smooth projective 
threefold $X$ based on Milne's formula assuming the 
finiteness of $\Br(X)$ and the unramified cohomology group $H_\ur^3(k(X),\Q/\Z(2))$. 
Here, $H_\ur^3(k(X),\Q/\Z(2))$ is defined as the 
kernel of the following boundary map $\partial$ in the localization spectral sequence of \'etale cohomology groups (cf. \cite[p.21 (3.6)]{CT}): 
$$\partial : H_\et^3(\Spec~k(X),\Q/\Z(2)) \to \bigoplus_{x\in X^{(1)}} H_{x,\et}^4(\Spec~\O_{X,x},\Q/\Z(2)),$$
\noindent
where $k(X)$ denotes the function field of $X$, and 
$X^{(1)}$ denotes the set of points of $X$ of codimension one. 
(See $\x$\ref{etale} below for $\Q/\Z(2)$.)

\bigskip
The first aim of this paper is to 
formulate a formula described almost exclusively in terms of Zariski topology 
for smooth projective fourfolds assuming the following two conjectures 
(see $\x$\ref{conjecture} for these conjectures). 
$$(\star \star) \left\{\begin{array}{l}
({\rm i})\ CH^2(X)\otimes \Q\  {\rm is\ finite\text{-}dimensional\ over\ }\Q,\ {\rm and\ the\ order\ }\rho_2\ {\rm of\ the\ pole\ of\ }\zeta(X,s)\ {\rm at}\\
\ \ \ \ s=2\ {\rm is\ equal\ to\ }\dim_\Q \left( CH^2(X)\otimes \Q \right).\ {\rm We\ call\ this\ hypothesis\ }the\ Tate\ \&\ Beilinson\\
\ \ \ \ conjecture\ for\ X\ {\rm and\ denote\ it\ by\ }
{\boldsymbol{TB}}^{\boldsymbol{2}}(X).\\
({\rm ii})\ CH^2(X)\ {\rm is\ finitely\ generated}. 
\end{array}
\right.\ \ \ \ \ \ $$

\noindent
There are examples of smooth projective $4$-folds satisfying $(\star \star)$ 
which are constructed using Soul\'e's result (\cite[Th\'eor\`eme 3]{Soule}) for $3$-dimensional case. 
We present an example; 
let $A$ be a $3$-dimensional abelian variety defined over $\F_q$, then 
$X=A\times_{\F_q} \P_{\F_q}^1$ is a $4$-dimensional variety satisfying $(\star \star)$, 
{\em i.e.,} $\boldsymbol{TB}^{\boldsymbol{2}}(X)$ and the finite generation of $CH^2(X)$. 
We will discuss the conjectures in $(\star \star)$ and their consequences in $\x$\ref{pre}. 
The first main result of this paper is as follows. 

\begin{theorem3}\label{conjecturetheorem}
Let $X$ be a $4$-dimensional smooth projective geometrically integral variety 
defined over a finite field $\F_q$. 

\begin{enumerate}
\item[(a)] 
If ${\boldsymbol{TB}^{\boldsymbol{2}}(X)}$ holds and $CH^2(X)$ is finitely generated, 
then $H_\ur^3(k(X),\Q/\Z(2))$ is finite. 

\item[(b)] 
${\boldsymbol{TB}^{\boldsymbol{2}}(X)}$ implies that $H_\et^{5}(X,\Z(2))$ is finite, 
where $\Z(2)$ is the \'etale sheafification of Bloch's cycle complex on $X$ 
(see $\x$\ref{finitenessresult} for more details).

\item[(c)]
${\boldsymbol{TB}^{\boldsymbol{2}}(X)}$ implies that the intersection pairing 
$$CH^2(X) \times CH^2(X) \to CH^4(X)\simeq CH_0(X) \overset{\deg}{\longrightarrow} \Z$$ is non-degenerate when tensored with $\Q$.

\item[(d)] 
$CH^2(X,i)_\tors$ is finite for $i=1,2,3$ and zero for $i\geq 4$.  

\end{enumerate}
\end{theorem3}

\noindent
Based on this result, our formula is as the following:

\begin{theorem3}\label{mainthm2}
Let $X$ be a $4$-dimensional 
smooth projective geometrically integral variety defined over a finite field $\F_q$. 
Assume the above two conjectures in $(\star \star)$ for $X$. 
Then the following formula holds$:$

\medskip
\noindent
$({\rm B})$ \hfill 
$\displaystyle \zeta^*(X,2)=(-1)^{S(2)} \cdot q^{\chi(X,\O_X,2)} \cdot \frac{|H_\ur^3(k(X),\Q/\Z(2))|^2}{|H^5_\et(X,\Z(2))| \cdot R_1} \cdot \prod_{i=0}^3 |CH^2(X,i)_\tors|^{2 \cdot (-1)^i}.$ \hfill { }\\

\medskip
\noindent
Here $R_1$ is the order of the cokernel of the map 
$$\theta : CH^2(X) \to \Hom( CH^2(X),\Z)$$
induced by the intersection pairing ($R_1$ is well-defined by Theorem \ref{conjecturetheorem} (c)), and $\chi(X,\O_X,2)$ is defined as  
$$\chi(X,\O_X,2) :=  \sum_{i,j} (-1)^{i+j}(2-i) \dim_{\F_q} H^j(X,\Omega_X^i)\ \ \ \ (0\leq i\leq 2,\ 0\leq j \leq 4).$$
\end{theorem3}

\noindent
We call this result {\em the higher Chow formula} for $\zeta^*(X,2)$, 
which is considered as a generalization of the Artin-Tate formula 
for the following two reasons.  
First, we consider the special value at the half-dimension-point, and second, 
comparing our formula with the Artin-Tate formula, there are some interesting analogies: 
the term $D$ corresponds to $R_1$ in our formula, 
$\Pic(X)_\tors \cdot (q-1)^{-2}$ corresponds to the higher Chow terms, 
and $|\Br(X)|$ corresponds to $|H_\et^5(X,\Z(2))| \cdot |H_\ur^3(k(X),\Q/\Z(2))|^{-2}$, and both numbers are squares or twice squares.

Concerning the case of $\dim X=4$ such as in our case, 
Milne have already showed two formulas for $\zeta^*(X,2)$ in 
\cite[Theorem 0.6]{Milne1} and \cite[Theorem 6.6]{Milne2}. 
But both of two formulas are not expressed in terms of inner geometric invariants. 
In addition, his results are based on 
an additional condition of surjectivity of the cycle class map of codimension two 
(cf. $\boldsymbol{T}^{\boldsymbol{n}}(X,\ell)$ in $\x 2.1$), which 
we do not assume in our main theorem. 
Our result gives a new zeta value formula in this sense.

As for $\zeta^*(X,1)$ and $\zeta^*(X,3)$, we unfortunately do not have 
fully a developed theory of a cycle class map of codimension three.  
Therefore, we do not have anything to say these special values in this paper.

\medskip
Our second main theorem concerns the comparison of our higher Chow formula (B) with 
the following formula due to Geisser in terms of Weil-\'etale motivic theory: 
$$\zeta^*(X,2)=(-1)^{S(2)} \cdot q^{\chi(X,\O_X,2)} \cdot \frac{1}{R_2} \cdot \prod_{i=0}^9 \left| H_\Wet^i(X,\Z(2))_\tors \right|^{(-1)^i},$$
under the assumptions (in addition to $(\star \star)$) 
that the Weil-\'etale motivic cohomology groups 
$H_\Wet^i(X,\Z(n))$ is finitely generated for every $i$ and $n$ 
(\cite[Theorem 9.1]{Geisser1}) and that $CH^2(X,1)\otimes \Q=0$ which is a part of the Parshin conjecture. 
Here $R_2$ is the order of the cokernel of the map $\vartheta : 
H_\Wet^4(X,\Z(2)) \to \Hom(H_\Wet^4(X,\Z(2)),\Z)$ induced by the following 
pairing  
$$H_\Wet^{4}(X,\Z(2)) \times H_\Wet^4(X,\Z(2)) \to H_\Wet^8(X,\Z(4)) \overset{- \cup e}{\to} H_\Wet^9(X,\Z(2)) \overset{\deg}{\to} \Z,$$
where $e$ is the canonical element corresponding to $1$ 
under the isomorphism of $H_\Wet^1(\Spec~\F_q,\Z) \overset{\simeq}{\longrightarrow} 
\Z$. We compare this formula with our higher Chow formula (B). 
More precisely, we wish to clarify which term of the 
higher Chow formula corresponds to which 
term of Geisser's formula. 
For this purpose, we begin with comparing higher Chow groups with 
Weil-\'etale motivic cohomology groups. By the duality arguments and 
standard facts on the cycle class map of codimension two, 
we get the following dictionary assuming $\boldsymbol{TB}^{\boldsymbol{2}}(X)$: 
$$H_\Wet^0(X,\Z(2))_\tors=0,\ \ \ H_\Wet^i(X,\Z(2))_\tors=
CH^2(X,4-i)_\tors \ \ (i=1,2,3),$$
$$H_\Wet^i(X,\Z(2))=\left( CH^2(X,i-6)_\tors \right)^*\ (i=7,8,9),\ H_\Wet^i(X,\Z(2))=0\ (i\geq 10).$$
The cases $i=4,5$ and $6$ are not so easy, 
but we have the following two facts by duality arguments: 
(i) under $\boldsymbol{TB}^{\boldsymbol{2}}(X)$ both 
$H_\Wet^4(X,\Z(2))_\tors$ and $H_\Wet^6(X,\Z(2))_\tors$ 
are finite and their orders are the same (Lemma \ref{2ndcorr}), and 
(ii) under $\boldsymbol{TB}^{\boldsymbol{2}}(X)$ 
$H_\Wet^5(X,\Z(2))_\tors$ is finite and 
the order of $H_\et^5(X,\Z(2))_\tors$ is equal to that of $H_\Wet^5(X,\Z(2))_\tors$ (Lemma \ref{5th}). 

The following equality finishes our comparison:

\begin{theorem3}\label{secondmainthm}
Assume the same assumptions in $(\star \star)$ as in Theorem \ref{mainthm2} for $X$, and 
assume further $CH^2(X,1)\otimes \Q=0$ and that 
$H_\Wet^i(X,\Z(2))$ is finitely generated for all $i$. 
Then the following equality holds$:$
$$\frac{1}{R_1} \left| H_\ur^3(k(X),\Q/\Z(2)) \right|^2 \cdot \left| CH^2(X)_\tors \right|^2 
=\frac{1}{R_2} \left| H_\Wet^4(X,\Z(2))_\tors \right| \cdot \left| H_\Wet^6(X,\Z(2))_\tors \right|.$$
\end{theorem3}

This equality is obtained by comparing $R_1$ and $R_2$ directly. 
The method is similar to the proof of Theorem \ref{mainthm2}, 
and we use a certain Weil-\'etale cycle class map that we define in 
$\x$\ref{Weiletalecycle}. 
This equality is also considered as a comparison of higher Chow groups with Weil-\'etale motivic cohomology groups $H_\Wet^i(X,\Z(2))$ for $i=4$ and $i=6$.

\medskip
The organization of this paper is as follows. 
In the next section ($\x$\ref{pre}), we 
briefly review on conjectures in $(\star \star)$ and basic 
facts on \'etale cohomology groups. Moreover we prove Theorem \ref{conjecturetheorem} (b),(c). 
In $\x$\ref{sec:proof}, 
we prove our higher Chow formula (B) and Theorem \ref{conjecturetheorem} (a),(d). 
In $\x$\ref{sec:comparison}, 
we prove Theorem \ref{secondmainthm}. 

\medskip
{\bf Notations.} Throughout this paper, we use the following notation:
$$\begin{array}{ll}
p & :{\rm a\ prime\ number}, \\
\F_q & :{\rm the\ finite\ field\ consisting\ of\ }q{\rm \ elements\ with}\ q=p^a\ (a \in \Z,\ a>0),\\
\oF_q & :{\rm the\ algebraic\ closure\ of\ }\F_q,  \\
\Gamma:=\Gal(\oF_q/\F_q) & :{\rm the\ absolute\ Galois\ group\ of\ }\F_q, \\
X & :{\rm a\ smooth\ projective\ geometrically\ integral\ variety\ over\ }\F_q,\\
\overline{X}:=X \otimes_{\F_q} \oF_q. & { }
\end{array}$$
We often write $d$ for the dimension of $X$. 
For an abelian group $M$, $M^*$ denotes the Pontryagin dual 
$\Hom(M,\Q/\Z)$. 

\medskip
{\bf Acknowledgements}. 
I deeply express my gratitude to Prof.~Kanetomo Sato for numerous fruitful discussions and continual warmful encouragement. 
I would like to thank Proff.~Lars Hesselholt, Kazuhiro Fujiwara and Hiroshi Saito for 
carefully reading the earlier version of this paper and giving me many useful comments. 
I also thank Prof. Thomas Geisser for giving some valuable comments. 

\section{Preliminaries}\label{pre}

In this section, we give a brief review on some classical results and conjectures, 
and provide with some preliminary results. 
These results will play key roles in the later sections.

\subsection{The $\ell$-adic \'etale cohomology group $H_\et^i(X,\Z_\ell(n))$}\label{etale}

We recall the following important results on \'etale cohomology groups of a smooth projective 
variety $X$ over $\F_q$. 
Let $\mu_{m}$ be the \'etale sheaf of $m$-th roots of unity defined for a positive integer $m$. 
We define $\Z/\ell^\nu\Z(n)$ and $\Q_\ell/\Z_\ell(n)$ for a prime number $\ell$ and an integer $n$ as follows:
$$\Z/\ell^\nu\Z(n) := \left\{\begin{array}{cl}
\mu_{\ell^\nu}^{\otimes n} & (\ell \neq p) \\
W_\nu\Omega^n_{X,\log}[-n] & (\ell = p)
\end{array}\right.,\ \ 
\Q_\ell/\Z_\ell(n) := \lim_{\stackrel{\longrightarrow}{{\small \nu}}} \Z/\ell^\nu \Z(n).$$
where $W_\nu\Omega^n_{X,\log}$ denotes the \'etale subsheaf of the 
logarithmic part of the Hodge-Witt sheaf $W_\nu\Omega^n_X$ (\cite[pp.596 - 597, I.5.7]{Illusie}). 
For a positive integer $m=\ell_1^{e_1} \cdots \ell_r^{e_r}$ ($\ell_1,\cdots,\ell_r$ are prime numbers, and 
$e_1,\cdots,e_r \geq 1$ are integers) and an integer $n$, 
we define $$\Z/m\Z (n) := \bigoplus_{j=1}^r \Z/\ell_j^{e_j} \Z(n),\ \ \ 
\Q/\Z(n):= \lim_{\stackrel{\longrightarrow}{{\small m}}} \Z/m\Z(n).$$ 
With these notations, we define the $\ell$-adic \'etale cohomology groups $H_\et^i(X,\Z_\ell(n))$ for all primes $\ell$ 
and their product $H_\et^i(X,\hZ(n))$ as follows: 
$$H_\et^i(X,\Z_\ell(n)) :=  \lim_{\stackrel{\longleftarrow}{{\small \nu}}} H_\et^i(X,\Z/\ell^\nu \Z(n)),\ \ 
H_\et^i(X,\hZ(n)) := \prod_{{\rm all\ primes\ }\ell} H_\et^i(X,\Z_\ell(n)).$$
The first fundamental results concerning to these cohomology groups which are often used in this paper are as follows: 

\begin{proposition}\label{kaita}
Let $X$ be a smooth projective variety over $\F_q$.

\begin{enumerate}
\item[(1)] Let $\ell$ be a prime number different from $p$. Then, there is an isomorphism $H_\et^i(X,\Z_\ell(n)) \simeq H_\cont^i(X,\Z_\ell(n))$ for all 
$i$ and $n$, where $H_\cont^i(X,\Z_\ell(n))$ is the continuous \'etale cohomology 
group due to Jannsen (\cite{Jannsen}).

\item[(2)] There is a long exact sequence 
$$\cdots \to H_\et^i(X,\Z_\ell(n)) \to H_\et^i(X,\Q_\ell(n)) \to H_\et^i(X,\Q_\ell/\Z_\ell(n)) \to H_\et^{i+1}(X,\Z_\ell(n)) \to \cdots$$
for all prime numbers $\ell$.

\item[(3)] There is a long exact sequence 
$$\cdots \to H_\et^i(X,\hZ(n)) \to H_\et^i(X,\hZ(n))\otimes \Q \to H_\et^i(X,\Q/\Z(n)) \to H_\et^{i+1}(X,\hZ(n)) \to \cdots.$$

\end{enumerate}
\end{proposition}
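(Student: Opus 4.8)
The plan is to derive all three assertions from Jannsen's formalism of continuous \'etale cohomology together with the finiteness of the \'etale cohomology of $X$ with finite coefficients. For (1), I would first recall that, since $X$ is smooth and projective over $\F_q$, each $H^i_\et(X,\mu_{\ell^\nu}^{\otimes n})$ is finite: $H^i_\et(\bX,\mu_{\ell^\nu}^{\otimes n})$ is finite by the classical finiteness theorem for \'etale cohomology, and the Hochschild--Serre spectral sequence for $\bX\to X$, whose Galois group $\Gamma\simeq\hZ$ has cohomological dimension one, then forces $H^i_\et(X,\mu_{\ell^\nu}^{\otimes n})$ to be finite. Hence the inverse system $\bigl(H^i_\et(X,\mu_{\ell^\nu}^{\otimes n})\bigr)_\nu$ is Mittag--Leffler, so the term ${\varprojlim}^1_\nu H^{i-1}_\et(X,\mu_{\ell^\nu}^{\otimes n})$ in Jannsen's exact sequence
$$0\longrightarrow{\varprojlim}^1_\nu H^{i-1}_\et(X,\mu_{\ell^\nu}^{\otimes n})\longrightarrow H^i_\cont(X,\Z_\ell(n))\longrightarrow\varprojlim_\nu H^i_\et(X,\mu_{\ell^\nu}^{\otimes n})\longrightarrow 0$$
vanishes, and the remaining surjection $H^i_\cont(X,\Z_\ell(n))\isoarrow\varprojlim_\nu H^i_\et(X,\mu_{\ell^\nu}^{\otimes n})=H^i_\et(X,\Z_\ell(n))$ is the desired isomorphism.

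For (2), I would fix a prime $\ell$. When $\ell\neq p$, I would combine (1) with the coefficient long exact sequence of continuous cohomology attached to the inverse system $(\Z/\ell^\nu\Z(n))_\nu$ of \'etale sheaves, relating $H^i_\cont(X,\Z_\ell(n))$, $H^i_\cont(X,\Z_\ell(n))\otimes\Q_\ell=H^i_\et(X,\Q_\ell(n))$ and $\varinjlim_\nu H^i_\et(X,\Z/\ell^\nu\Z(n))=H^i_\et(X,\Q_\ell/\Z_\ell(n))$; concretely this amounts to taking cohomology of the Bockstein short exact sequences $0\to\mu_{\ell^m}^{\otimes n}\to\mu_{\ell^{m+\nu}}^{\otimes n}\to\mu_{\ell^\nu}^{\otimes n}\to 0$, passing to $\varprojlim_m$ (exact by the Mittag--Leffler property just established), and then to $\varinjlim_\nu$ (exact, and compatible with \'etale cohomology on the quasi-compact quasi-separated scheme $X$). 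When $\ell=p$, I would run the same argument with the fundamental exact sequences $0\to W_m\Omega^n_{X,\log}\to W_{m+\nu}\Omega^n_{X,\log}\to W_\nu\Omega^n_{X,\log}\to 0$ of logarithmic Hodge--Witt sheaves on the smooth scheme $X$, using the finiteness of $H^j_\et(X,W_m\Omega^n_{X,\log})$ (which holds for $X$ smooth projective over $\F_q$) for the Mittag--Leffler step and the definition of $H^i_\et(X,\Z_p(n))$.

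For (3), I would assemble the long exact sequence from two ingredients. The first is the four-term exact sequence $0\to H^i_{\tors}\to H^i\to H^i\otimes\Q\to H^i\otimes\Q/\Z\to 0$, where $H^i:=H^i_\et(X,\hZ(n))$, coming from the long $\mathrm{Tor}$-sequence of $0\to\Z\to\Q\to\Q/\Z\to 0$ tensored with $H^i$ (using $\mathrm{Tor}_1^{\Z}(H^i,\Q/\Z)\simeq H^i_{\tors}$). The second is a universal-coefficient short exact sequence
$$0\longrightarrow H^i_\et(X,\hZ(n))\otimes\Q/\Z\longrightarrow H^i_\et(X,\Q/\Z(n))\longrightarrow H^{i+1}_\et(X,\hZ(n))_{\tors}\longrightarrow 0,$$
obtained by breaking each sequence of (2) at $H^i_\et(X,\Q_\ell/\Z_\ell(n))$, summing over all $\ell$, and using $H^i_\et(X,\Q/\Z(n))=\bigoplus_\ell H^i_\et(X,\Q_\ell/\Z_\ell(n))$ (\'etale cohomology commutes with the filtered colimit $\Q/\Z(n)=\bigoplus_\ell\Q_\ell/\Z_\ell(n)$) together with the identifications $\bigl(\prod_\ell M_\ell\bigr)\otimes\Q/\Z=\bigoplus_\ell\bigl(M_\ell\otimes\Q_\ell/\Z_\ell\bigr)$ and $\bigl(\prod_\ell M_\ell\bigr)_{\tors}=\bigoplus_\ell(M_\ell)_{\tors}$ for $M_\ell:=H^i_\et(X,\Z_\ell(n))$ --- both valid because $\ell_0$ acts invertibly on $M_\ell$ for every $\ell\neq\ell_0$. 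Splicing these two families along the shared terms $H^i_\et(X,\hZ(n))\otimes\Q/\Z$ and $H^{i+1}_\et(X,\hZ(n))_{\tors}$ then yields the long exact sequence of (3).

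I expect the main obstacle to be step (3): infinite products do not commute with $-\otimes\Q$ or $-\otimes\Q/\Z$ in general (for instance $\bigl(\prod_\ell\Z_\ell\bigr)\otimes\Q\subsetneq\prod_\ell\Q_\ell$), so one cannot merely take the product of the sequences of (2) and rename the terms; the correct identification of the rational and $\Q/\Z$-parts rests on the prime-by-prime divisibility observation above, which in turn relies on the finiteness theorems of Deligne and Gabber quoted in the introduction (in particular on the fact that $H^i_\et(X,\Z_\ell(n))$ is a finitely generated $\Z_\ell$-module, and vanishes for all but finitely many $\ell$ once $i\neq 2n,2n+1$). By contrast, steps (1) and (2) are essentially formal once the finiteness of cohomology with finite coefficients --- including the $p$-primary case --- is granted.
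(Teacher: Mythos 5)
Your proofs of (1) and (2) follow the paper's own route essentially verbatim: finiteness of $H_\et^i(X,\Z/\ell^\nu\Z(n))$ via the Hochschild--Serre spectral sequence for $\Gamma\simeq\hZ$ kills Jannsen's ${\varprojlim}^1$ term, and the Bockstein triangles $0\to\Z/m\Z(n)\to\Z/mm'\Z(n)\to\Z/m'\Z(n)\to 0$ (including the $p$-primary case via the logarithmic Hodge--Witt sheaves) are passed to the limit using the Mittag--Leffler property. For (3) you genuinely diverge: the paper simply takes the same Bockstein sequences for composite moduli, then the projective limit over $m$ followed by the inductive limit over $m'$, leaving implicit why the middle term comes out as $H_\et^i(X,\hZ(n))\otimes\Q$ rather than $\prod_\ell H_\et^i(X,\Q_\ell(n))$; you instead splice the prime-by-prime sequences of (2), broken at $H_\et^i(X,\Q_\ell/\Z_\ell(n))$, against the elementary four-term sequence $0\to H^i_\tors\to H^i\to H^i\otimes\Q\to H^i\otimes\Q/\Z\to 0$ for $H^i:=H_\et^i(X,\hZ(n))$. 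This costs you the explicit verification that $\bigl(\prod_\ell M_\ell\bigr)\otimes\Q/\Z=\bigoplus_\ell M_\ell\otimes\Q_\ell/\Z_\ell$ and $\bigl(\prod_\ell M_\ell\bigr)_\tors=\bigoplus_\ell(M_\ell)_\tors$, but it buys a cleaner justification of exactly the point the paper glosses over, namely the identification of the rational and $\Q/\Z$-coefficient terms in the product over all primes. The one step you should not leave entirely tacit is that the maps produced by the splicing agree with the natural maps induced by $\Z(n)\to\Q(n)\to\Q/\Z(n)$ and the Bockstein; this is a routine compatibility, but it is what makes the spliced sequence the sequence asserted in the statement rather than merely an exact sequence with the same terms.
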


\begin{proof}
(1) There is a following short exact sequence (\cite[(0.2)]{Jannsen}):
$$0 \to  {\lim_{\stackrel{\longleftarrow}{{\small \nu}}}}^{1} H_\et^{i-1}(X,\Z/\ell^\nu \Z(n)) \to 
H_\cont^i(X,\Z_\ell(n)) \to H_\et^i(X,\Z_\ell(n)) \to 0,$$
where $\displaystyle {\lim_{\stackrel{\longleftarrow}{{\small \nu}}}}^{1}$ denotes the derived limit. 
Since $H_\et^i(\overline{X},\Z/\ell^\nu \Z(n))$ is finite for $\overline{X} := X \otimes_{\F_q} \oF_q$ by \cite[Expos\'e XVI, Th\'eor\`eme 5.2]{SGA4}, 
we see $H_\et^i(X,\Z/\ell^\nu \Z(n))$ is also finite by the following exact sequence:
$$0 \to H_\et^{i-1}(\overline{X},\Z/\ell^\nu \Z(n))_\Gamma \to H_\et^i(X,\Z/\ell^\nu \Z(n)) 
\to H_\et^i(\overline{X},\Z/\ell^\nu \Z(n))^\Gamma \to 0,$$
which is obtained from the Hochschild-Serre spectral sequence 
$$E_2^{u,v}=H^u_{\rm Gal}(\Gamma,H_\et^v(\overline{X},\Z/\ell^\nu \Z(n))) \Rightarrow H_\et^{u+v}(X,\Z/\ell^\nu \Z(n))$$
($H_{\rm Gal}^i(\Gamma,-)$ denotes the Galois cohomology of $\Gamma = \Gal (\oF_q/\F_q)$) and the Pontryagin duality 
of the Galois cohomology of $\Gamma$. 
Therefore $\displaystyle {\lim_{\stackrel{\longleftarrow}{{\small \nu}}}}^{1} H_\et^{i-1}(X,\Z/\ell^\nu \Z(n))$ is $0$ for 
all $i$, and we have $H_\cont^i(X,\Z_\ell(n)) \simeq H_\et^i(X,\Z_\ell(n))$ by the first short 
exact sequence in this proof. 

\medskip
(2) We consider the following short exact sequence of complexes on $X_\et$ for positive integers $m$ and $m'$ 
(see \cite[p.779, Lemme 3]{CTSS} for the $p$-primary part):

\centerline{$(\dagger)$ \hfill $0 \to \Z/m\Z(n) \overset{\times m'}{\longrightarrow} \Z/mm'\Z(n) \to \Z/m'\Z(n) \to 0.$ \hfill { }}

\medskip
\noindent
When $(m,m')=(\ell^\nu,\ell^\mu)$ for a prime number $\ell$ ($\ell$ may be $p$), 
this exact sequence $(\dagger)$ yields a long exact sequence
$$\cdots \to H_\et^i(X,\Z/\ell^\nu \Z(n)) \to H_\et^i(X,\Z/\ell^{\nu+\mu}\Z(n)) \to H_\et^i(X,\Z/\ell^\mu \Z(n))\to \cdots.$$
If $\ell \neq p$, we have shown in (1) that these groups are all finite. If $\ell=p$, we see that these groups are all finite 
as well by \cite[Theorem 1.14]{Milne1}. Therefore in any case, we obtain the desired long exact sequence by taking 
the projective limit with respect to $\nu$ and then taking the inductive limit with respect to $\mu$. 

\medskip
(3) We have the following long exact sequence using the above sequence $(\dagger)$:
$$\cdots \to H_\et^i(X,\Z/m\Z(n)) \to H_\et^i(X,\Z/mm'\Z(n)) \to H_\et^i(X,\Z/m'\Z(n)) \to \cdots.$$
By taking projective limit with respect to $m$, and taking inductive limit with respect to $m'$, 
we obtain the desired long exact sequence. 
\end{proof}

\medskip
The second fundamental results are consequences of theorems by 
Deligne (\cite[Th\'eor\`eme 1.6]{Deligne1}) and Gabber (\cite{Gabber}). 
(cf. \cite[pp.779 - 786]{CTSS})

\begin{theorem}\label{DGconsequence}
Let $X$ be a smooth projective variety over $\F_q$. Then 

\begin{enumerate}
\item[(1)] 
The $\ell$-adic \'etale cohomology group $H_\et^i(X,\Z_\ell(n))$ is 
finitely generated for all $\ell,\ i$ and $n$. 

\item[(2)] 
For all $i\neq 2n,2n+1$, $H_\et^i(X,\hZ(n))$ is finite. 

\item[(3)] 
$H_\et^{2n}(X,\hZ(n))_\tors$ and $H_\et^{2n-1}(X,\Q/\Z(n))$ are both finite. 
\end{enumerate}
\end{theorem}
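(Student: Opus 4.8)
The plan is to reduce everything to the geometric cohomology groups $H_\et^i(\overline{X},\Z_\ell(n))$ together with the action of the Frobenius, and then invoke the theorems of Deligne (purity/Weil bounds) and Gabber (finite generation of the $\Z_\ell$-cohomology). For part (1), I would treat the cases $\ell\neq p$ and $\ell=p$ separately. For $\ell\neq p$, the continuous \'etale cohomology $H_\cont^i(X,\Z_\ell(n))$ sits in the Hochschild--Serre spectral sequence for $\overline{X}\to X$ with $\Gamma=\hZ$, which degenerates into short exact sequences
$$0\to H^1\!\big(\Gamma,H_\et^{i-1}(\overline{X},\Z_\ell(n))\big)\to H_\cont^i(X,\Z_\ell(n))\to H^0\!\big(\Gamma,H_\et^i(\overline{X},\Z_\ell(n))\big)\to 0;$$
since $H_\et^{i-1}(\overline{X},\Z_\ell(n))$ is a finitely generated $\Z_\ell$-module by Gabber, both flanking terms (coinvariants and invariants under a procyclic group acting on a f.g. module) are finitely generated $\Z_\ell$-modules, hence so is the middle term, and Proposition \ref{kaita}(1) identifies it with $H_\et^i(X,\Z_\ell(n))$. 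For $\ell=p$ one argues similarly using the logarithmic Hodge--Witt description, relying on \cite[Theorem 1.14]{Milne1} for the requisite finiteness of the mod-$p^\nu$ groups and taking the limit.

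For part (2), fix $i\neq 2n,2n+1$. It suffices to show $H_\et^i(X,\Z_\ell(n))$ is finite for every $\ell$ and is zero for all but finitely many $\ell$. Finiteness: in the exact sequence above (and its $p$-adic analogue), the invariant and coinvariant terms of $H_\et^j(\overline{X},\Z_\ell(n))$ under Frobenius $\varphi$ are controlled by $\det(1-\varphi\mid H_\et^j(\overline{X},\Q_\ell(n)))$; by Deligne's theorem the eigenvalues of $\varphi$ on $H_\et^j(\overline{X},\Q_\ell)$ are Weil numbers of weight $j$, so on the Tate twist they have weight $j-2n\neq 0$ precisely when $j\in\{i-1,i\}$ and $i\neq 2n,2n+1$; hence $1$ is not an eigenvalue, the map $1-\varphi$ is an isomorphism on the $\Q_\ell$-vector space, and the $\Z_\ell$-invariants and coinvariants are therefore finite. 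So $H_\et^i(X,\Z_\ell(n))$ is finite for each $\ell$. To see it vanishes for almost all $\ell$: for $\ell\neq p$ not dividing the (finitely many) relevant orders, the relevant $H_\et^j(\overline{X},\Z_\ell(n))$ are free over $\Z_\ell$ and $1-\varphi$ is an automorphism of a free module by the integrality of the characteristic polynomial together with $\ell\nmid\det(1-\varphi)$, giving $H_\et^i(X,\Z_\ell(n))=0$; the product defining $H_\et^i(X,\hZ(n))$ is then finite. Part (3) is the boundary case: here I would use the long exact sequence of Proposition \ref{kaita}(3), which gives
$$H_\et^{2n-1}(X,\Q/\Z(n))\ \hookrightarrow\ H_\et^{2n}(X,\hZ(n))\ \xrightarrow{\ \otimes\Q\ }\ H_\et^{2n}(X,\hZ(n))\otimes\Q,$$
so $H_\et^{2n-1}(X,\Q/\Z(n))$ surjects onto (in fact maps isomorphically to) $H_\et^{2n}(X,\hZ(n))_\tors$; finiteness of either reduces again to the weight-$0$ part of $H_\et^{2n}(\overline{X},\hZ(n))$, where Deligne's bounds show the generalized $1$-eigenspace is a fixed finite-dimensional piece and Gabber's theorem makes the integral torsion there finite, while the bad primes contributing torsion are finite in number. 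A parallel $\ell$-by-$\ell$ analysis via Proposition \ref{kaita}(2) handles $H_\et^{2n-1}(X,\Q_\ell/\Z_\ell(n))$.

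The main obstacle, and the only genuinely nontrivial input, is the uniformity over $\ell$ in parts (2) and (3): for a single $\ell$ everything follows formally from Gabber's finite generation plus Deligne's weight bounds, but to conclude that the infinite product $H_\et^i(X,\hZ(n))$ is finite one must control the primes $\ell$ at which the geometric cohomology has $\ell$-torsion or at which $\varphi$ fails to act invertibly on the integral lattice. This is exactly the content of \cite[pp.779--786]{CTSS}: the characteristic polynomials $P_j(X,t)=\det(1-\varphi t\mid H_\et^j(\overline{X},\Q_\ell))$ have integer coefficients independent of $\ell$ (for $\ell\neq p$), and the torsion in $H_\et^\ast(\overline{X},\Z_\ell)$ vanishes for $\ell$ outside a finite set; granting these facts, the desired finiteness statements fall out of the Hochschild--Serre sequences as above. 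I would cite \cite{Deligne1}, \cite{Gabber}, and \cite{CTSS} for these and otherwise just assemble the spectral-sequence bookkeeping.
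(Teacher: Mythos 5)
Your proposal is correct and follows essentially the same route as the paper: both arguments reduce everything to (i) finiteness for each fixed $\ell$, which comes from the vanishing of $H_\et^i(X,\Q_\ell(n))$ for $i\neq 2n,2n+1$ via Deligne's weight bounds, and (ii) vanishing for almost all $\ell$, which comes from Gabber's theorem together with the $\ell$-independence of the characteristic polynomials, assembled through the coefficient exact sequences of Proposition \ref{kaita} — precisely the content of \cite[Th\'eor\`eme 2]{CTSS} that the paper cites as a black box and you unwind explicitly. The only cosmetic difference is that the paper isolates the case $i=2n+2$ in step (2)(b) and refers back to the CTSS argument there, whereas your Hochschild--Serre formulation handles all $i\neq 2n,2n+1$ uniformly.
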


\begin{proof}
(1) See \cite[Lemma V.1.11]{Milne5}. 

(2) We prove the following two assertions:
\begin{enumerate}
\item[(a)] For all $i\neq 2n,2n+1$ and all primes $\ell$, 
$H_\et^i(X,\Z_\ell(n))$ is finite. 
\item[(b)] For all $i\neq 2n,2n+1$ and almost all primes $\ell \neq p$, 
$H_\et^i(X,\Z_\ell(n))=0$. 
\end{enumerate} 
We first prove (a) assuming $\ell \neq p$. 
By Proposition \ref{kaita} (2), 
there is a long exact sequence
$$\cdots \to H_\et^i(X,\Z_\ell(n)) \to H_\et^i(X,\Q_\ell(n)) \to H_\et^i(X,\Q_\ell/\Z_\ell(n)) \to H_\et^{i+1}(X,\Z_\ell(n)) \to \cdots.$$
We note that $H_\et^i(X,\Z_\ell(n))$ is a finitely generated $\Z_\ell$-module, 
$H_\et^i(X,\Q_\ell(n))$ is a finite-dimensional $\Q_\ell$-vector space, and 
$H_\et^i(X,\Q_\ell/\Z_\ell(n))$ is a cofinitely generated $\Z_\ell$-module. 
Using these properties, we see that the finiteness of $H_\et^i(X,\Q_\ell/\Z_\ell(n))$ is 
equivalent to that of $H_\et^i(X,\Z_\ell(n))$. 
Moreover, this finiteness condition is equivalent to 
$H_\et^i(X,\Q_\ell(n))=0$. 
Therefore, by the result of Colliot-Th\'el\`ene-Sansuc-Soul\'e (\cite[p.780, Th\'eor\`eme 2]{CTSS}), $H_\et^i(X,\Z_\ell(n))$ is finite for all $i \neq 2n,2n+1$. 
For the case $\ell=p$, we use the long exact sequence of Proposition \ref{kaita} (2) and 
the finiteness of $H_\et^i(X,\Q_p/\Z_p(n))$ for $i\neq 2n,2n+1$ (\cite[p.782, Th\'eor\`eme 3]{CTSS}). 

To prove (b), we consider the above long exact sequence of Proposition \ref{kaita} (2) again. 
By (a), $H_\et^i(X,\Z_\ell(n))$ is finite for all $i\neq 2n,2n+1$, and 
$H_\et^i(X,\Q_\ell(n))=0$ for all $i\neq 2n,2n+1$. 
Hence we see that $H_\et^i(X,\Z_\ell(n))=0$ for all $i\neq 2n,2n+1$ and $2n+2$. 
For the case $i=2n+2$, we need to prove that 
$H_\et^{i-1}(X,\Q_\ell/\Z_\ell(n))$ is divisible for almost all $\ell \neq p$, which one can check by repeating 
the arguments in the proof of \cite[p.780, Th\'eor\`eme 2]{CTSS}  due to 
Colliot-Th\'el\`ene-Sansuc-Soul\'e. 

(3) We use the long exact sequence of Proposition \ref{kaita} (3). 
By \cite[p.780, Th\'eor\`eme 2]{CTSS}, 
$H_\et^{2n-1}(X,\Q/\Z(n))$ is finite. 
We have already seen that $H_\et^{2n-1}(X,\hZ(n))\otimes \Q=0$ by the above (2). 
Therefore, using the above long exact sequence, we see $H_\et^{2n}(X,\hZ(n))_\tors$ is finite. 
\end{proof}

We will study $H_\et^5(X,\hZ(2))_\tors$ in $\x$\ref{finitenessresult}, which is a special case of $H_\et^{2n+1}(X,\hZ(n))_\tors$ that was not treated in the above 
theorem.

The third important result is the following duality theorem 
due to Milne (\cite[Theorem 1.14]{Milne1}):

\begin{theorem}[Milne]\label{Milneduality} 
Let $X$ be a $4$-dimensional smooth projective variety over $\F_q$. 
Then there is a canonical non-degenerate pairing of finite groups
$$H_\et^i(X,\Z/\ell^\nu\Z(2)) \times H_\et^{9-i}(X,\Z/\ell^\nu\Z(2)) \to \Z/\ell^\nu\Z$$
for all primes $\ell$. 
Consequently, taking limits, we have the following Pontryagin duality between compact groups and discrete groups$:$
$$H_\et^i(X,\hZ(2)) \times H_\et^{9-i}(X,\Q/\Z(2)) \to \Q/\Z.$$
\end{theorem}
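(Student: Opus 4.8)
Since the statement is a theorem of Milne, in the body of the paper one simply cites \cite[Theorem 1.14]{Milne1}; nonetheless I sketch the structure of a proof. The plan is to construct, for each prime $\ell$ and each $\nu\geq 1$, a canonical perfect pairing of the \emph{finite} groups $H_\et^i(X,\Z/\ell^\nu\Z(2))\times H_\et^{9-i}(X,\Z/\ell^\nu\Z(2))\to\Z/\ell^\nu\Z$ realized as a cup product followed by a trace map, to check that this family of pairings is compatible (as $\nu$ varies) with the maps induced by the short exact sequence $(\dagger)$ from the proof of Proposition \ref{kaita}, and finally to pass to the limit. The construction at finite level splits into the two cases $\ell\neq p$ and $\ell=p$, which rest on different inputs: Poincar\'e duality for torsion coefficients prime to $p$ versus duality for the logarithmic de Rham--Witt sheaves.

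For $\ell\neq p$ I would deduce the pairing from arithmetic (Artin--Verdier type) Poincar\'e duality for the smooth projective $4$-fold $X$ over $\F_q$. Over $\oF_q$ one has the classical Poincar\'e duality $H_\et^a(\oX,\FF)\times H_\et^{8-a}(\oX,\FF^\vee(4))\to H_\et^8(\oX,\Z/\ell^\nu\Z(4))\isoarrow\Z/\ell^\nu\Z$ for a locally constant constructible $\Z/\ell^\nu\Z$-sheaf $\FF$ (\cite{SGA4}; note $2\dim\oX=8$). Combining this with the Hochschild--Serre spectral sequence for $\Gamma=\Gal(\oF_q/\F_q)$ --- which degenerates because $\Gamma$ has cohomological dimension one and the $H_\et^a(\oX,-)$ are finite --- together with the canonical isomorphism $H^1(\Gamma,\Z/\ell^\nu\Z)\isoarrow\Z/\ell^\nu\Z$ (the same ingredient used in the proof of Proposition \ref{kaita} (1)), one obtains a canonical perfect pairing $H_\et^b(X,\FF)\times H_\et^{9-b}(X,\FF^\vee(4))\to H_\et^9(X,\Z/\ell^\nu\Z(4))\isoarrow\Z/\ell^\nu\Z$. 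Taking $\FF=\Z/\ell^\nu\Z(2)$, one has $\FF^\vee(4)\simeq\Z/\ell^\nu\Z(2)$, so this is exactly the asserted self-pairing; finiteness of all the groups involved is the SGA4 finiteness theorem already used in Proposition \ref{kaita}.

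For $\ell=p$ I would unwind the definition $\Z/p^\nu\Z(2)=W_\nu\Omega^2_{X,\log}[-2]$, so that $H_\et^a(X,\Z/p^\nu\Z(2))=H_\et^{a-2}(X,W_\nu\Omega^2_{X,\log})$ and the asserted pairing becomes $H_\et^{i-2}(X,W_\nu\Omega^2_{X,\log})\times H_\et^{7-i}(X,W_\nu\Omega^2_{X,\log})\to\Z/p^\nu\Z$; here $(i-2)+(7-i)=5=\dim X+1$, and the wedge product $W_\nu\Omega^2_{X,\log}\times W_\nu\Omega^2_{X,\log}\to W_\nu\Omega^4_{X,\log}$ makes the cup product land in $H_\et^5(X,W_\nu\Omega^4_{X,\log})$. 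Thus this case reduces to Milne's duality theorem for logarithmic Hodge--Witt sheaves: a trace isomorphism $H_\et^{d+1}(X,W_\nu\Omega^d_{X,\log})\isoarrow\Z/p^\nu\Z$ for smooth projective $X$ of dimension $d$, together with perfectness of the induced pairing $H_\et^a(X,W_\nu\Omega^n_{X,\log})\times H_\et^{d+1-a}(X,W_\nu\Omega^{d-n}_{X,\log})\to\Z/p^\nu\Z$. I would establish this via the defining exact sequence of $W_\nu\Omega^n_{X,\log}$ (\cite[I.5.7]{Illusie}), which reduces the assertion to Serre--Ekedahl duality for the coherent de Rham--Witt complex $W_\nu\Omega^\bullet_X$ on the proper smooth variety $X$; properness also yields the finiteness. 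This $p$-primary input is the genuine difficulty of the theorem, and it is exactly the part that one is content to cite.

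For the passage to the limit, the level-$\nu$ pairings form a compatible system when one uses on the first factor the transition maps $H_\et^i(X,\Z/\ell^{\nu+1}\Z(2))\to H_\et^i(X,\Z/\ell^\nu\Z(2))$ induced by the surjection $\Z/\ell^{\nu+1}\Z(2)\twoheadrightarrow\Z/\ell^\nu\Z(2)$, and on the second factor the maps $H_\et^{9-i}(X,\Z/\ell^\nu\Z(2))\to H_\et^{9-i}(X,\Z/\ell^{\nu+1}\Z(2))$ induced by the inclusion $\Z/\ell^\nu\Z(2)\hookrightarrow\Z/\ell^{\nu+1}\Z(2)$ --- the required compatibility of cup products and trace maps with $(\dagger)$ being a routine check. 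Since all the groups are finite there is no $\varprojlim^{1}$ obstruction, and the inverse limit of a compatible system of perfect pairings of finite abelian groups is a perfect pairing between the profinite group $H_\et^i(X,\Z_\ell(2))=\varprojlim_\nu H_\et^i(X,\Z/\ell^\nu\Z(2))$ and the discrete torsion group $H_\et^{9-i}(X,\Q_\ell/\Z_\ell(2))=\varinjlim_\nu H_\et^{9-i}(X,\Z/\ell^\nu\Z(2))$, because $\Z_\ell$ and $\Q_\ell/\Z_\ell$ are Pontryagin dual. Taking the product over all primes $\ell$ then gives the non-degenerate pairing $H_\et^i(X,\hZ(2))\times H_\et^{9-i}(X,\Q/\Z(2))\to\Q/\Z$ between the compact group $H_\et^i(X,\hZ(2))$ and the discrete group $H_\et^{9-i}(X,\Q/\Z(2))$.
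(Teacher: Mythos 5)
The paper offers no proof of this statement: it is recorded as Milne's theorem with the bare citation \cite[Theorem 1.14]{Milne1}, exactly as you anticipate in your opening sentence. Your sketch is correct and faithfully reproduces the actual architecture of Milne's argument --- for $\ell\neq p$, geometric Poincar\'e duality on $\oX$ in top degree $8$ combined with the degenerate Hochschild--Serre spectral sequence for $\Gamma=\Gal(\oF_q/\F_q)$ (raising the duality degree to $9$ and self-dualizing the twist, since $\Z/\ell^\nu\Z(2)^\vee(4)\simeq\Z/\ell^\nu\Z(2)$); for $\ell=p$, the duality $H_\et^a(X,W_\nu\Omega^n_{X,\log})\times H_\et^{d+1-a}(X,W_\nu\Omega^{d-n}_{X,\log})\to\Z/p^\nu\Z$ reduced via the defining sequences of the logarithmic sheaves to Ekedahl--Serre duality for the de Rham--Witt complex; and the limit formalism turning perfect pairings of finite groups into a Pontryagin duality between $\varprojlim$ and $\varinjlim$. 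The only point you elide, reasonably, is the compatibility of the cup-product pairings with the transition maps coming from $(\dagger)$, which is the routine but necessary input for the limit step; you correctly identify the $p$-primary duality as the genuinely hard part that the citation is meant to absorb.
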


\noindent
We often use this duality theorem in this paper.

\subsection{Classical deep conjectures}\label{conjecture}

We remind the basic notations and definitions of equivalence relations on algebraic cycles 
on a smooth projective variety $X$ defined over $\F_q$ (cf. \cite{Kleiman}). 
Let $Z^j(X)$ be the group of (algebraic) cycles of codimension $j$ on $X$, and 
$Z^j_\sim(X)$ be the group of cycles of codimension $j$ 
which are equivalent to $0$ with respect to an adequate 
equivalence relation $\sim$. 
In this paper, we consider three equivalence relations; 
rational equivalence ($\rat$), $\ell$-adic homological equivalence for a prime number $\ell$ ($\hom(\ell)$), and 
numerical equivalence ($\num$). Let $\overline{X} := X\otimes_{\F_q} \oF_q$. 
Recall that a cycle $C$ of codimension $j$ is said to be 
$\ell$-adic homologically equivalent to $0$ 
if $C$ goes to $0$ under the $\ell$-adic cycle class map:  
$$Z^j(X)\otimes \Q_\ell \to H_\et^{2j}(\overline{X},\Q_\ell(j))\ \ (\ell \neq p),\ \ \ Z^j(X) \otimes \Q_p \to H_\crys^{2j}(X/W) \otimes_W K\ \ (\ell = p),$$
where $H_\crys^{2j}(X/W)$ denotes the crystalline cohomology group of $X$ over $W$, 
$W:=W(\F_q)$ denotes the ring of Witt vectors on $\F_q$, and $K$ denotes the field of fractions of $W$. 
The relationship between these three equivalence relations is as follows:
$$Z_\rat^j(X) \subset Z_{\hom(\ell)}^j(X) \subset Z_\num^j(X).$$
We define the Chow group of codimension $j$ as $CH^j(X) := Z^j(X)/Z_\rat^j(X)$. 
Tate and Beilinson raised the following conjectures for a prime number $\ell$ 
different from $p$:

\begin{enumerate}
\item ${\boldsymbol{T}^{\boldsymbol{n}}(X,\ell)}$: The $\ell$-adic cycle class map 
$$CH^n(X)\otimes \Q_\ell \to H^{2n}_\et(\oX,\Q_\ell(n))^{\Gamma}$$
is surjective, where $\Gamma$ denotes $\Gal(\oF_q/\F_q)$. 
\item ${\boldsymbol{SS}^{\boldsymbol{n}}(X,\ell)}$: 
$\Gamma$ acts semisimply on the generalized eigenspace of eigenvalue $1$ in $H_\et^{2n}(\overline{X},\Q_\ell(n))$. 
\item ${\boldsymbol{E}^{\boldsymbol{n}}(X,\ell)}$: With rational coefficients, 
the $\ell$-adic homological equivalence agrees with the numerical equivalence ({\em i.e.}, $Z_{\hom(\ell)}^n(X)\otimes \Q = Z_\num^n(X)\otimes \Q$).  
\item ${\boldsymbol{B}^{\boldsymbol{n}}(X,\ell)}$: With rational coefficients, the rational equivalence agrees with the $\ell$-adic homological equivalence ({\em i.e.}, $Z_\rat^n(X)\otimes \Q=Z_{\hom(\ell)}^n(X)\otimes \Q$). 
\end{enumerate}

\noindent
For $\ell = p$, we need to introduce the following $p$-adic counterparts of these conjectures. 

\begin{enumerate}
\item ${\boldsymbol{T}^{\boldsymbol{n}}(X,p)}$: Let 
$\varphi:H_\crys^{2n}(X/W) \to H_\crys^{2n}(X/W)$ be the crystalline Frobenius 
induced by the absolute Frobenius endomorphism on $X$. 
Then the $p$-adic cycle class map 
$$CH^n(X)\otimes \Q_p \to \left( H^{2n}_\crys(X/W) \otimes_W K \right)^{\varphi = p^n}$$
is surjective. 
\item ${\boldsymbol{SS}^{\boldsymbol{n}}(X,p)}$: 
The crystalline Frobenius $\varphi$ acts semisimply on the generalized eigenspace of eigenvalue $p^n$ in $H_\crys^{2n}(X/W) \otimes_W K$. 
\item ${\boldsymbol{E}^{\boldsymbol{n}}(X,p)}$: With rational coefficients, 
the $p$-adic homological equivalence agrees with the numerical equivalence ({\em i.e.}, $Z_{\hom(p)}^n(X)\otimes \Q = Z_\num^n(X)\otimes \Q$).  
\item ${\boldsymbol{B}^{\boldsymbol{n}}(X,p)}$: With rational coefficients, the rational equivalence agrees with the $p$-adic homological equivalence ({\em i.e.}, $Z_\rat^n(X)\otimes \Q=Z_{\hom(p)}^n(X)\otimes \Q$). 
\end{enumerate}

In this paper, we sometimes assume the following {\em Tate \& Beilinson conjecture} for $n=2$:

\medskip
\begin{quotation}
${\boldsymbol{TB}^{\boldsymbol{n}}(X)}$: {\em $CH^n(X)\otimes \Q$ is finite\text{-}dimensional over $\Q$, and 
the order $\rho_n$ of the pole of $\zeta(X,s)$ at $s=n$ is equal to $\dim_\Q \left( CH^n(X) \otimes \Q \right)$. }
\end{quotation}

\medskip
This conjecture is sometimes called strong Tate and Beilinson conjecture. 
The original form of this conjecture (cf. \cite[Theorem 2.9 (e)]{Tate2}) 
asserts that the rank of the group of numerical equivalence classes ({\em not} rational equivalence classes) of cycles of 
codimension $n$ on $X$ is equal to the order $\rho_n$ defined in the above, and 
Tate proved that it is equivalent to ${\boldsymbol{T}^{\boldsymbol{n}}(X,\ell)}+{\boldsymbol{SS}^{\boldsymbol{n}}(X,\ell)}$ for a single $\ell$. 

\begin{proposition}\label{TBconjecture}
Let $X$ be a $d$-dimensional smooth projective variety over $\F_q$. Then 
the Tate \& Beilinson conjecture ${\boldsymbol{TB}^{\boldsymbol{n}}(X)}$ is equivalent to the combination of the conjectures $\boldsymbol{T}^{\boldsymbol{n}}(X,\ell)$, $\boldsymbol{SS}^{\boldsymbol{n}}(X,\ell)$, 
$\boldsymbol{E}^{\boldsymbol{n}}(X,\ell)$ and $\boldsymbol{B}^{\boldsymbol{n}}(X,\ell)$ for all prime numbers $\ell$. 
\end{proposition}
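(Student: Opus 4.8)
The plan is to show the two implications separately, using as the bridge the classical theorem of Tate (the ``original form'' quoted just before the proposition) that the strong statement about ranks of numerically-trivial cycles is equivalent to $\boldsymbol{T}^{\boldsymbol{n}}(X,\ell) + \boldsymbol{SS}^{\boldsymbol{n}}(X,\ell)$ for a single $\ell$. Write $N^n(X) := Z^n(X)/Z_\num^n(X)$ for the group of numerical equivalence classes; it is finitely generated, and set $\varrho := \operatorname{rank}_\Z N^n(X)$. The content of $\boldsymbol{TB}^{\boldsymbol{n}}(X)$ is that $CH^n(X)\otimes\Q$ is finite-dimensional and $\dim_\Q(CH^n(X)\otimes\Q) = \rho_n$, the pole order, while Tate's theorem says $\boldsymbol{T}^{\boldsymbol{n}}(X,\ell)+\boldsymbol{SS}^{\boldsymbol{n}}(X,\ell)$ (one $\ell$) $\iff$ $\varrho = \rho_n$.

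For the direction ``$\boldsymbol{TB}^{\boldsymbol{n}}(X) \Longrightarrow$ all four conjectures for all $\ell$'': first I would note that the surjection $CH^n(X)\otimes\Q \twoheadrightarrow N^n(X)\otimes\Q$ always gives $\varrho \le \dim_\Q(CH^n(X)\otimes\Q)$, and one always has $\varrho \ge \rho_n$ by the standard positivity/counting argument (the multiplicity of $q$ as a reciprocal root of $P_{2n}(X,t)$ is at least the rank of numerical classes). Hence $\boldsymbol{TB}^{\boldsymbol{n}}(X)$ forces all these inequalities to be equalities: in particular $\varrho = \rho_n$, so by Tate's theorem $\boldsymbol{T}^{\boldsymbol{n}}(X,\ell)$ and $\boldsymbol{SS}^{\boldsymbol{n}}(X,\ell)$ hold for every $\ell$ (Tate's equivalence runs through any single $\ell$, and the target statement $\varrho=\rho_n$ is $\ell$-independent, so one gets it for all $\ell$ at once). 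Moreover $\dim_\Q(CH^n(X)\otimes\Q) = \varrho$ forces the surjection $CH^n(X)\otimes\Q \to N^n(X)\otimes\Q$ to be an isomorphism; since this map factors through $Z^n(X)\otimes\Q / Z_{\hom(\ell)}^n(X)\otimes\Q$, both $Z_\rat^n \otimes\Q = Z_{\hom(\ell)}^n\otimes\Q$ (this is $\boldsymbol{B}^{\boldsymbol{n}}(X,\ell)$) and $Z_{\hom(\ell)}^n\otimes\Q = Z_\num^n\otimes\Q$ (this is $\boldsymbol{E}^{\boldsymbol{n}}(X,\ell)$) follow from the sandwich $Z_\rat^n \subseteq Z_{\hom(\ell)}^n \subseteq Z_\num^n$ together with equality of the two ends after $\otimes\Q$. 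This handles $\ell\ne p$; for $\ell=p$ the same argument works verbatim once one knows the $p$-adic cycle class map into crystalline cohomology and Tate's equivalence hold in that setting, which is the crystalline analogue referred to in the statement of the conjectures.

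For the converse ``all four for all $\ell$ (even for one $\ell$) $\Longrightarrow \boldsymbol{TB}^{\boldsymbol{n}}(X)$'': from $\boldsymbol{T}^{\boldsymbol{n}}(X,\ell)+\boldsymbol{SS}^{\boldsymbol{n}}(X,\ell)$ Tate's theorem gives $\varrho = \rho_n$, in particular $\rho_n < \infty$ and equals $\operatorname{rank}_\Z N^n(X)$. Then $\boldsymbol{B}^{\boldsymbol{n}}(X,\ell)$ and $\boldsymbol{E}^{\boldsymbol{n}}(X,\ell)$ together say $Z_\rat^n\otimes\Q = Z_\num^n\otimes\Q$, hence $CH^n(X)\otimes\Q \xrightarrow{\ \sim\ } N^n(X)\otimes\Q$, which is finite-dimensional of dimension $\varrho = \rho_n$. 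That is exactly $\boldsymbol{TB}^{\boldsymbol{n}}(X)$. Note only one $\ell$ is needed here, so as a by-product one gets that the four conjectures for a single $\ell$ already imply the four conjectures for all $\ell$.

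The main obstacle is not any one of these implications individually — each is a short diagram chase once the right inputs are lined up — but rather making sure the ``single $\ell$ versus all $\ell$'' bookkeeping is airtight and that the $p$-adic case is handled on the same footing as $\ell\ne p$. Concretely, the step I expect to require the most care is invoking Tate's theorem and the positivity inequality $\varrho \ge \rho_n$ uniformly in $\ell$ (including $\ell = p$, via crystalline cohomology and the Katz–Messing comparison of characteristic polynomials of Frobenius), since the whole argument hinges on the $\ell$-independence of the numerical quantity $\varrho$ and of the pole order $\rho_n$, and on the fact that Tate's equivalence may be threaded through whichever prime is convenient. Everything else reduces to the elementary observation that a sandwich $A \subseteq B \subseteq C$ of subgroups with $A\otimes\Q = C\otimes\Q$ collapses after tensoring with $\Q$.
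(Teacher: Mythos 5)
Your converse direction is fine and is essentially the paper's argument read backwards: $\boldsymbol{T}^{\boldsymbol{n}}+\boldsymbol{SS}^{\boldsymbol{n}}$ give $\varrho=\rho_n$ via Tate's theorem, while $\boldsymbol{B}^{\boldsymbol{n}}+\boldsymbol{E}^{\boldsymbol{n}}$ collapse the sandwich $Z^n_\rat(X)\otimes\Q\subseteq Z^n_{\hom(\ell)}(X)\otimes\Q\subseteq Z^n_\num(X)\otimes\Q$, so that $\dim_\Q(CH^n(X)\otimes\Q)=\varrho=\rho_n$.

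The forward direction, however, rests on a false inequality. You assert that ``one always has $\varrho\ge\rho_n$''; the unconditional inequality goes the other way, namely
$$\varrho=\rank_\Z A^n_\num(X)\ \le\ \rank_\Z A^n_{\hom(\ell)}(X)\ \le\ \dim_{\Q_\ell}H_\et^{2n}(\oX,\Q_\ell(n))^\Gamma\ \le\ \rho_n,$$
which is exactly what your own parenthetical justification (``the multiplicity of $q^n$ as a reciprocal root of $P_{2n}$ is at least the rank of numerical classes'') states. Indeed, if $\varrho\ge\rho_n$ held unconditionally, it would combine with the chain above to yield $\varrho=\rho_n$ for every smooth projective variety over $\F_q$, i.e., the Tate conjecture would be a theorem --- compare condition (3) of $(\star)$ in the introduction, where this equality for surfaces is an assumption, not a fact. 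With the correct inequality your argument gives only $\varrho\le\rho_n=\dim_\Q(CH^n(X)\otimes\Q)$, from which neither $\varrho=\rho_n$ nor the injectivity of $CH^n(X)\otimes\Q\twoheadrightarrow A^n_\num(X)\otimes\Q$ follows, so the deduction of all four conjectures collapses. The paper does not attempt such a positivity argument: it reduces the statement to the single observation that $\dim_\Q(CH^n(X)\otimes\Q)=\dim_\Q(A^n_\num(X)\otimes\Q)$ is equivalent to $Z_\rat^n(X)\otimes\Q=Z_\num^n(X)\otimes\Q$ (i.e., to $\boldsymbol{B}^{\boldsymbol{n}}+\boldsymbol{E}^{\boldsymbol{n}}$), and then invokes the full package of equivalences in Tate's Theorem 2.9 --- whose statement (e) is precisely the $\ell$-independent condition $\varrho=\rho_n$ --- together with its crystalline analogue for $\ell=p$. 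The forward direction must be routed through that theorem rather than through a direct counting inequality.
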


\begin{proof}
Obviously, the condition 
$\dim_\Q CH^n(X)\otimes \Q=\dim_\Q A^n_\num(X)\otimes \Q$ is equivalent to 
the condition $Z_\rat^n(X)\otimes \Q=Z_\num^n(X) \otimes \Q$. 
Hence the assertion for the $\ell \neq p$ follows from a result of Tate (\cite[Theorem 2.9]{Tate2}). 
As for the case $\ell=p$, 
one can obtain analogous results as in \cite[Theorem 2.9]{Tate2} by 
replacing $\ell$-adic \'etale cohomology with crystalline cohomology (cf.~\cite[Theorem 1.11]{Milne10}). 
Thus we obtain the proposition. 
\end{proof}

$\boldsymbol{TB}^{\boldsymbol{2}}(X)$ is a very strong assumption. 
As consequences of $\boldsymbol{TB}^{\boldsymbol{2}}(X)$, 
we have the following facts which are useful in this paper. 

\begin{proposition}\label{TBconsequence}
Let $X$ be a $4$-dimensional smooth projective geometrically integral variety over $\F_q$. Then 

\begin{enumerate}
\item[(1)] 
$\boldsymbol{TB}^{\boldsymbol{2}}(X)$ implies that the $\ell$-adic cycle class map
$$CH^2(X) \otimes \Q_\ell \to H_\et^4(X,\Q_\ell(2))$$
is bijective for all prime numbers $\ell$.

\item[(2)]
Assume $\boldsymbol{TB}^{\boldsymbol{2}}(X)$ and that $CH^2(X)$ is finitely generated. Then the $\ell$-adic 
cycle class map with $\Z_\ell$-coefficient 
$$\r_{\Z_\ell}^2:CH^2(X)\otimes \Z_\ell \to H_\et^4(X,\Z_\ell(2))$$
is injective for all prime numbers $\ell$. 
\end{enumerate}
\end{proposition}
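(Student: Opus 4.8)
The plan is to deduce both statements from the long exact sequence of Proposition \ref{kaita} (2), the finiteness results of Theorem \ref{DGconsequence}, and the known behaviour of the $\ell$-adic cycle class map. For part (1), I would start from the factorization of the cycle class map $CH^2(X)\otimes\Q_\ell \to H_\et^4(X,\Q_\ell(2))$ through $H_\et^4(\oX,\Q_\ell(2))^\Gamma$ (for $\ell\neq p$; for $\ell=p$ replace $\ell$-adic \'etale cohomology by crystalline cohomology and $(-)^\Gamma$ by $(-)^{\varphi=p^2}$). By Proposition \ref{TBconjecture}, the hypothesis $\boldsymbol{TB}^{\boldsymbol{2}}(X)$ gives $\boldsymbol{T}^{\boldsymbol 2}(X,\ell)$, $\boldsymbol{SS}^{\boldsymbol 2}(X,\ell)$, $\boldsymbol{E}^{\boldsymbol 2}(X,\ell)$, $\boldsymbol{B}^{\boldsymbol 2}(X,\ell)$ for all $\ell$. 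Surjectivity of $CH^2(X)\otimes\Q_\ell \to H_\et^4(\oX,\Q_\ell(2))^\Gamma$ is exactly $\boldsymbol{T}^{\boldsymbol 2}(X,\ell)$, and injectivity follows because $\boldsymbol{B}^{\boldsymbol 2}$ and $\boldsymbol{E}^{\boldsymbol 2}$ together identify the rational-equivalence classes with the numerical ones, while $\boldsymbol{SS}^{\boldsymbol 2}$ ensures that the Hochschild--Serre edge map $H_\et^4(X,\Q_\ell(2)) \to H_\et^4(\oX,\Q_\ell(2))^\Gamma$ is an isomorphism (the generalized eigenspace for eigenvalue $1$ being semisimple makes the relevant $E_2^{1,3}$ and $E_2^{2,2}$-type differentials/contributions vanish rationally, as in Tate's original argument). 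Combining, $CH^2(X)\otimes\Q_\ell \to H_\et^4(X,\Q_\ell(2))$ is bijective.

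For part (2), I would compare the integral cycle class map $\r^2_{\Z_\ell}$ with its rational counterpart. Since $CH^2(X)$ is finitely generated, $CH^2(X)\otimes\Z_\ell$ is a finitely generated $\Z_\ell$-module, and $(CH^2(X)\otimes\Z_\ell)\otimes_{\Z_\ell}\Q_\ell = CH^2(X)\otimes\Q_\ell$. Likewise $H_\et^4(X,\Z_\ell(2))$ is finitely generated over $\Z_\ell$ by Theorem \ref{DGconsequence} (1), with $H_\et^4(X,\Z_\ell(2))\otimes\Q_\ell = H_\et^4(X,\Q_\ell(2))$ by Proposition \ref{kaita} (2). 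The kernel of $\r^2_{\Z_\ell}$ is then a submodule of $CH^2(X)\otimes\Z_\ell$ that maps to $0$ in $H_\et^4(X,\Q_\ell(2))$, hence is contained in the torsion submodule of $CH^2(X)\otimes\Z_\ell$; so it suffices to show $\r^2_{\Z_\ell}$ is injective on torsion. The torsion of $CH^2(X)\otimes\Z_\ell$ is $CH^2(X)_\tors\otimes\Z_\ell = CH^2(X)\{\ell\}$ (using finite generation), and here I would invoke the standard fact — going back to the Bloch--Kato/Merkurjev--Suslin circle of results, valid since $CH^2$ and its torsion are controlled by $H^3$ with finite coefficients — that the cycle class map is injective on the $\ell$-primary torsion of $CH^2$. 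Concretely, the composite $CH^2(X)\{\ell^\nu\}\hookrightarrow H_\et^4(X,\Z/\ell^\nu(2))$ coming from the Bloch cycle complex with finite coefficients is injective by Merkurjev--Suslin--Lichtenbaum (codimension $2$), and passing to the limit gives injectivity of $\r^2_{\Z_\ell}$ on torsion. Since the torsion-free quotient injects by the rational statement, $\r^2_{\Z_\ell}$ is injective.

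The case $\ell=p$ in both parts is handled in parallel: in (1) one uses the crystalline cycle class map into $\bigl(H_\crys^4(X/W)\otimes_W K\bigr)^{\varphi=p^2}$ together with $\boldsymbol{T}^{\boldsymbol 2}(X,p)$, $\boldsymbol{SS}^{\boldsymbol 2}(X,p)$, $\boldsymbol{E}^{\boldsymbol 2}(X,p)$, $\boldsymbol{B}^{\boldsymbol 2}(X,p)$ from Proposition \ref{TBconjecture}, and the identification $H_\et^4(X,\Q_p(2))$ with the appropriate eigenspace; in (2) the $p$-primary torsion injectivity of the cycle class map uses the logarithmic de Rham--Witt description $\Z/p^\nu(2)=W_\nu\Omega^2_{X,\log}[-2]$ and the corresponding Geisser--Levine / Gros--Suwa type injectivity statement for codimension $2$ cycles mod $p^\nu$.

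The main obstacle I anticipate is not any single deep input but the bookkeeping in part (1): one must be careful that $\boldsymbol{SS}^{\boldsymbol 2}$ genuinely kills the contribution of the higher Hochschild--Serre terms rationally so that $H_\et^4(X,\Q_\ell(2)) \isoarrow H_\et^4(\oX,\Q_\ell(2))^\Gamma$, and that the mismatch between "$H^4$ of $X$" and "$\Gamma$-invariants of $H^4$ of $\oX$" does not hide an extra factor. For $\ell=p$ the analogous comparison between $H^4_\et(X,\Q_p(2))$ and the crystalline eigenspace requires citing the $p$-adic semisimplicity and the slope/eigenvalue considerations carefully; this is exactly where I would lean on \cite[Theorem 1.11]{Milne10} rather than re-prove anything.
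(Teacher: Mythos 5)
Your overall strategy coincides with the paper's: in (1) reduce to Tate's theorem that $\boldsymbol{TB}^{\boldsymbol{2}}(X)$ makes $CH^2(X)\otimes\Q_\ell\to H^4_\et(\oX,\Q_\ell(2))^\Gamma$ bijective (resp.\ the crystalline eigenspace for $\ell=p$) and then identify $H^4_\et(X,\Q_\ell(2))$ with the $\Gamma$-invariants; in (2) reduce to injectivity on torsion and quote Colliot-Th\'el\`ene--Sansuc--Soul\'e and Gros. Part (2) is exactly the paper's argument and is fine.

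There is, however, one step in part (1) whose justification as written would fail. Since $\Gamma\simeq\hZ$ has cohomological dimension one, the Hochschild--Serre spectral sequence degenerates into the short exact sequence
$$0\to H^3_\et(\oX,\Q_\ell(2))_\Gamma\to H^4_\et(X,\Q_\ell(2))\to H^4_\et(\oX,\Q_\ell(2))^\Gamma\to 0,$$
so there is no $E_2^{2,2}$-type contribution at all, and the only obstruction to the edge map being an isomorphism is the coinvariants of $H^3$. You attribute the vanishing of this term to $\boldsymbol{SS}^{\boldsymbol{2}}(X,\ell)$, but that conjecture concerns only the generalized eigenspace of eigenvalue $1$ in $H^4_\et(\oX,\Q_\ell(2))$ and says nothing whatsoever about $H^3$; it cannot kill $H^3_\et(\oX,\Q_\ell(2))_\Gamma$. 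The correct (and unconditional) reason, which is what the paper uses, is Deligne's theorem: the Frobenius eigenvalues on $H^3_\et(\oX,\Q_\ell(2))$ have absolute value $q^{-1/2}\neq 1$, so $1-\Fr$ is invertible on this space and both invariants and coinvariants vanish. (Where $\boldsymbol{SS}^{\boldsymbol{2}}$ does enter Tate's circle of ideas is in identifying the rank of the invariants of $H^4$ with the order of the pole, not here.) The analogous point for $\ell=p$ is handled in the paper by showing $p^2-\varphi$ is surjective on $H^3_\crys(X/W)\otimes_W K$ via Katz--Messing, again a weight argument rather than a semisimplicity one; your deferral to \cite[Theorem 1.11]{Milne10} is acceptable but hides this. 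With the justification of the edge-map isomorphism corrected to a weight/purity argument, your proof goes through and matches the paper's.
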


\begin{proof}
(1) We first prove the case $\ell \neq p$. 
By a result due to Tate (\cite[Theorem 2.9]{Tate2}), 
$\boldsymbol{TB}^{\boldsymbol{2}}(X)$ implies the following 
$\ell$-adic cycle class map is bijective:
$$CH^2(X) \otimes \Q_\ell \overset{\simeq}{\longrightarrow} H_\et^4(\overline{X},\Q_\ell(2))^\Gamma.$$
Thus it remains to prove an isomorphism 
$H_\et^4(X,\Q_\ell(2)) \overset{\simeq}{\longrightarrow} H_\et^4(\overline{X},\Q_\ell(2))^\Gamma.$ 
Since $\Gamma$ has cohomological dimension one, 
the Hochschild-Serre spectral sequence for the continuous \'etale cohomology groups (\cite[(3.5) b)]{Jannsen})
$$E_2^{u,v}=H^u_\cont(\Gamma,H^v_\et(\overline{X},\Q_\ell(2))) \Rightarrow H^{u+v}_\cont(X,\Q_\ell(2))$$
and the isomorphism $H_\cont^i(X,\Q_\ell(2)) \simeq H_\et^i(X,\Q_\ell(2))$ in Proposition \ref{kaita} (1) give the following exact sequence:

\medskip
\centerline{$(\blacklozenge)$ \hfill $0 \to H^{3}_\et(\overline{X},\Q_\ell(2))_\Gamma \to H_\et^{4}(X,\Q_\ell(2)) \to H_\et^4(\overline{X},\Q_\ell(2))^\Gamma \to 0$.\hfill { }}

\medskip
\noindent
On the other hand, we have the following exact sequence for $M:=H_\et^3(\overline{X},\Q_\ell(2))$:
$$0 \to M^\Gamma \to 
M \overset{1-\Fr}{\longrightarrow} 
M \to M_\Gamma \to 0.$$
Here $\Fr$ denotes the canonical generator of $\Gamma$ acting on $M$. 
Since $M \overset{1-\Fr}{\longrightarrow} M$ is an isomorphism 
by the Weil conjecture (Deligne's theorem \cite[Th\'eor\`eme 1.6]{Deligne1}), 
we see that $M_\Gamma =0$. 
Therefore 
we have $H_\et^4(X,\Q_\ell(2)) \overset{\simeq}{\longrightarrow} H_\et^4(\overline{X},\Q_\ell(2))^\Gamma$ by $(\blacklozenge)$, which proves the assertion if $\ell \neq p$. 

We prove the case $\ell=p$. 
By Proposition \ref{TBconjecture}, 
$\boldsymbol{TB}^{\boldsymbol{2}}(X)$ implies that the following $p$-adic cycle class map is bijective:
$$CH^2(X) \otimes \Q_p \overset{\simeq}{\longrightarrow} \left( H_\crys^4 (X/W) \otimes_W K \right)^{\varphi = p^2}.$$
We consider the long exact sequence (cf. \cite[p.785, (37)]{CTSS}, \cite[I.5.7.2]{Illusie}) 
$$\cdots \to H_\crys^3(X/W) \otimes_W K \overset{p^2-\varphi}{\longrightarrow} H_\crys^3(X/W)\otimes_W K \to $$
$$\ \ \ H_\et^4(X,\Q_p(2)) \overset{\tau}{\longrightarrow} H_\crys^4(X/W)\otimes_W K \overset{p^2 - \varphi}{\longrightarrow} 
H_\crys^4(X/W)\otimes_W K \to \cdots,\ \ \ \ \ \ \ \ \ \ \ \ \ \ $$
where we have used the compatibility between the crystalline Frobenius operator and the Frobenius operator acting on 
the de Rham-Witt complex (\cite[p.603, II (1.2.3)]{Illusie}). 
From this sequence, we see the map 
$$\tau:H^4_\et(X,\Q_p(2)) \to \left( H_\crys^4(X/W) \otimes_W K \right)^{\varphi = p^2}$$
is surjective. 
It remains to prove that the map $p^2-\varphi:H_\crys^3(X/W)\otimes_W K \to H_\crys^3(X/W) \otimes_W K$ is surjective. 
Let $a$ be the integer with $q=p^a$. Then, $\Phi := \varphi^a$ is $K$-linear and we have 
$$q^2-\Phi= \left( p^2-\varphi \right) \left(p^{2(a-1)}+p^{2(a-2)}\varphi+\cdots+p^2\varphi^{a-2} + \varphi^{a-1} \right)$$
as endomorphism on $H_\crys^3(X/W)\otimes_W K$. Moreover, 
$q^2-\Phi$ is bijective on $H_\crys^3(X/W) \otimes_W K$ by the Katz-Messing theorem (\cite{KatzMessing}). 
Hence $p^2-\varphi$ is surjective on $H_\crys^3(X/W)\otimes_W K$ and we obtain the assertion. 

\medskip
(2) By (1), it remains to prove that $\r_{\Z_\ell}^2$ is injective on torsion 
for all primes $\ell$, which has been proved by Colliot-Th\'el\`ene-Sansuc-Soul\'e (\cite[p.787, Th\'eor\`eme 4]{CTSS} for the case of $\ell \neq p$) and Gros (\cite[Th\'eor\`eme 2.1.11]{Gros} for the case of $\ell=p$). 
\end{proof}

\subsection{Finiteness of $H_\ur^3(k(X),\Q_\ell/\Z_\ell(2))$ and $H_\et^5(X,\Z(2))$}\label{finitenessresult}

In our second main theorem (Theorem \ref{mainthm2}), 
we assume three conjectures in $(\star \star)$, 
but these conjectures are not independent of each other. 
In this subsection, we prove Theorem \ref{finiteness} below concerning their relations. 
We recall the cycle complex of $X$ due to Bloch (\cite{Bloch2}), 
which is a cochain complex of abelian groups of the following form: 
$$\cdots \to z^n(X,j) \overset{d_j}{\longrightarrow} z^n(X,j-1) 
\overset{d_{j-1}}{\longrightarrow} \cdots \overset{d_1}{\longrightarrow} 
z^n(X,0).$$
Here $n$ is an integer, and $z^n(X,j)$ is placed in degree $-j$. 
Each $z^n(X,j)$ is defined as the free abelian group 
generated by all codimension $n$ integral closed subvarieties 
on $X \times \Delta^j$ of codimension $n$ 
which intersect properly with all faces of $X\times \Delta^j$, 
where $\Delta^\bullet$ is the standard cosimplicial scheme over $\Spec~\Z$ defined as follows:
$$\Delta^j := \Spec~\Z[x_0,x_1,\cdots,x_j]/(x_0+x_1+\cdots+x_j-1).$$
We define $\Z(n)$ as the \'etale sheafification of the presheaf of 
this cycle complex shifted by degree $-2n$. 
Later we use $\Q(n)$ that is defined as $\Q(n) := \Z(n) \otimes \Q$.

\begin{theorem}\label{finiteness}
Let $X$ be a $4$-dimensional smooth projective geometrically integral variety 
defined over $\F_q$.

\begin{enumerate}
\item[(a)] 
If ${\boldsymbol{TB}^{\boldsymbol{2}}(X)}$ holds and $CH^2(X)$ is finitely generated, then $H_\ur^3(k(X),\Q_\ell/\Z_\ell(2))$ is finite for all prime numbers $\ell$. 

\item[(b)] ${\boldsymbol{TB}^{\boldsymbol{2}}(X)}$ implies that $H_\et^{5}(X,\Z(2))$ is finite.

\item[(c)] ${\boldsymbol{TB}^{\boldsymbol{2}}(X)}$ implies that the intersection pairing
$$CH^2(X) \times CH^2(X) \to CH^4(X)\simeq CH_0(X) \overset{\deg}{\longrightarrow} \Z$$ is non-degenerate when tensored with $\Q$. 
\end{enumerate}
\end{theorem}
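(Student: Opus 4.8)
The plan is to treat the three parts by a common strategy: translate each statement into a question about $\ell$-adic étale cohomology groups, where the Tate \& Beilinson conjecture gives us the key input via Proposition \ref{TBconsequence}, and then exploit Milne's duality (Theorem \ref{Milneduality}) together with the finiteness results of Deligne--Gabber (Theorem \ref{DGconsequence}).

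For part (c), I would argue first. By Proposition \ref{TBconsequence}(1), $\boldsymbol{TB}^{\boldsymbol{2}}(X)$ makes the cycle class map $CH^2(X)\otimes\Q_\ell \to H^4_\et(X,\Q_\ell(2))$ an isomorphism for every $\ell$. Milne's duality, after passing to $\Q_\ell$-coefficients, gives a perfect pairing $H^4_\et(X,\Q_\ell(2)) \times H^4_\et(X,\Q_\ell(2)) \to \Q_\ell$ (the self-dual middle-degree case $i=9-i$ would need $i=9/2$, so in fact one pairs $H^4$ with $H^5$; I must be careful here — the correct statement is that the intersection pairing on $CH^2$ corresponds under the cycle class map to the Poincaré duality pairing composed with the trace, which is non-degenerate on the image of cycles precisely when homological and numerical equivalence agree). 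Since $\boldsymbol{TB}^{\boldsymbol{2}}(X)$ includes $\boldsymbol{E}^{\boldsymbol 2}(X,\ell)$ (Proposition \ref{TBconjecture}), homological equivalence equals numerical equivalence with $\Q$-coefficients, and numerical equivalence is by definition the radical of the intersection pairing; hence the pairing on $CH^2(X)\otimes\Q = Z^2_\num(X)\otimes\Q$ quotiented by nothing is non-degenerate. So (c) reduces to unwinding definitions once (1) of Proposition \ref{TBconsequence} is in hand.

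For part (b), the idea is to compare $H^5_\et(X,\Z(2))$ — étale motivic cohomology — with $\ell$-adic étale cohomology via the comparison $H^5_\et(X,\Z(2))\otimes\Z_\ell \cong H^5_\et(X,\Z_\ell(2))$ in the relevant range (using the Beilinson--Lichtenbaum / Geisser--Levine type comparison, valid here because $\boldsymbol{TB}^{\boldsymbol 2}$ forces the needed vanishing), together with the finite generation of $H^5_\et(X,\Z_\ell(2))$ from Theorem \ref{DGconsequence}(1). The point is that $\boldsymbol{TB}^{\boldsymbol 2}(X)$ implies $H^4_\et(X,\Q_\ell(2))$ accounts for all cycles, which by Milne duality and the exact sequences of Proposition \ref{kaita}(2) forces $H^5_\et(X,\Z_\ell(2))$ to be torsion, hence finite (finitely generated + torsion); moreover $H^5_\et(X,\Z_\ell(2))=0$ for almost all $\ell$ by the argument of Theorem \ref{DGconsequence}(2)(b), so the product over all $\ell$ is finite. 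One then needs that the integral étale motivic $H^5$ injects into $H^5_\et(X,\hZ(2))$ with finite cokernel, which follows from a Bockstein/arithmetic-duality argument and the rational vanishing $H^5_\et(X,\Q(2))=0$ coming from $\boldsymbol{TB}^{\boldsymbol 2}$.

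For part (a), I would realize $H^3_\ur(k(X),\Q_\ell/\Z_\ell(2))$ as a subquotient of global étale cohomology: by the Bloch--Ogus / Gersten resolution, $H^3_\ur(k(X),\Q_\ell/\Z_\ell(2)) = H^0_{\Zar}(X,\mathcal{H}^3(\Q_\ell/\Z_\ell(2)))$, and there is an exact sequence relating this to $H^3_\et(X,\Q_\ell/\Z_\ell(2))$ and the Chow group $CH^2(X)$ tensored with $\Q_\ell/\Z_\ell$ (the coniveau spectral sequence). Concretely, $CH^2(X)$ being finitely generated controls $CH^2(X)\otimes\Q_\ell/\Z_\ell$ and $CH^2(X)\{\ell\}$, while $\boldsymbol{TB}^{\boldsymbol 2}(X)$ via Proposition \ref{TBconsequence}(2) gives injectivity of the integral cycle class map; chasing the coniveau sequence, the unramified group sits in an exact sequence between a quotient of $H^3_\et(X,\Q_\ell/\Z_\ell(2))$ — finite by Milne duality against the finitely generated $H^6_\et(X,\hZ(2))$, itself finite off the critical degrees by Theorem \ref{DGconsequence} — and the cokernel of $\rho^2_{\Z_\ell}$ on divisible parts, which vanishes. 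Summing over $\ell$ and checking that only finitely many $\ell$ contribute (again by Theorem \ref{DGconsequence}(2)(b) and finite generation of $CH^2(X)$) yields finiteness of $H^3_\ur(k(X),\Q/\Z(2))$.

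The main obstacle I expect is part (a): making the coniveau spectral sequence argument precise enough to isolate $H^3_\ur$ as a genuinely finite subquotient requires carefully identifying the relevant differentials and their kernels/cokernels in terms of $CH^2(X)$, $H^3_\et$ and $H^4_\et$, and the $p$-primary part ($\ell=p$) will need the logarithmic Hodge--Witt machinery (Gros's theorem, as invoked in Proposition \ref{TBconsequence}(2)) rather than the cleaner $\mu_{\ell^\nu}^{\otimes n}$ formalism. Parts (b) and (c) are comparatively formal once Proposition \ref{TBconsequence} and Milne's duality are granted.
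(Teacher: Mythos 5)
There is a genuine gap in your part (b). The claim that $\boldsymbol{TB}^{\boldsymbol{2}}(X)$ ``forces $H_\et^5(X,\Z_\ell(2))$ to be torsion'' and that this group vanishes for almost all $\ell$ is false: the Hochschild--Serre sequence gives $H_\et^5(X,\Q_\ell(2))\simeq H_\et^4(\oX,\Q_\ell(2))_\Gamma$, which under $\boldsymbol{TB}^{\boldsymbol{2}}(X)$ has dimension $\rho_2=\dim_\Q\left(CH^2(X)\otimes\Q\right)\geq 1$ (the square of a hyperplane class already survives), so $H_\et^5(X,\Z_\ell(2))$ has positive rank for \emph{every} $\ell$. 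What is torsion is the \'etale \emph{motivic} group $H_\et^5(X,\Z(2))$ (since $H_\et^5(X,\Q(2))=CH^2(X,-1)\otimes\Q=0$), and the real issue is the finiteness of $H_\et^5(X,\hZ(2))_\tors=\prod_\ell H_\et^5(X,\Z_\ell(2))_\tors$, i.e.\ the vanishing of the $\ell$-torsion for almost all $\ell$. This does \emph{not} follow from the argument of Theorem \ref{DGconsequence} (2)(b), which explicitly excludes the critical degrees $i=2n,2n+1$; here $i=5=2n+1$ is critical. The paper closes exactly this point by quoting two results of Milne: $\boldsymbol{TB}^{\boldsymbol{2}}(X)$ yields $H_\et^5(X,\Z(2))\simeq H_\et^5(X,\hZ(2))_\tors$, and $\boldsymbol{SS}^{\boldsymbol{2}}(X,\ell)$ for all $\ell$ (a consequence of $\boldsymbol{TB}^{\boldsymbol{2}}(X)$ by Proposition \ref{TBconjecture}) gives the finiteness of the right-hand side. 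Without that semisimplicity input your argument for (b) does not close.

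Your parts (a) and (c) are essentially sound. Part (a) is the paper's argument: the Bloch--Ogus coniveau sequence reduces the finiteness of $H_\ur^3(k(X),\Q_\ell/\Z_\ell(2))$ to that of $H_\et^3(X,\Q_\ell/\Z_\ell(2))$ and of $\Ker\!\left(CH^2(X)\otimes\Q_\ell/\Z_\ell\to H_\et^4(X,\Q_\ell/\Z_\ell(2))\right)$, the latter by a diagram chase using Proposition \ref{TBconsequence}; note only that the statement is for each fixed $\ell$, and your closing ``only finitely many $\ell$ contribute'' is not justified by what you cite --- that much harder fact is Theorem \ref{conjecturetheorem} (a) and is proved in the paper only via the Milne--Kahn--Chambert-Loir zeta-value formula. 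For (c), your route through $\boldsymbol{E}^{\boldsymbol{2}}(X,\ell)$ and the definition of numerical equivalence (once the typo $CH^2(X)\otimes\Q=Z^2(X)\otimes\Q/Z^2_\num(X)\otimes\Q$ is fixed) is correct and in fact more direct than the paper's proof, which identifies $CH^2(X)\otimes\Q_\ell$ with $H_\et^4(\oX,\Q_\ell(2))^\Gamma\simeq H_\et^4(\oX,\Q_\ell(2))_\Gamma$ and invokes Poincar\'e duality.
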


Theorem \ref{finiteness} (a) will be used later in the proof of Theorem 1 (a) (see $\x$\ref{subsec:ell} below). 
Theorem \ref{conjecturetheorem} (b),(c) follows from the corresponding assertions of this theorem.

\begin{proof} 
(a) We fix a prime number $\ell$, and consider the following local-global spectral sequence (cf. \cite{BlochOgus}):
$$E_1^{u,v}=\bigoplus_{x\in X^{(u)}} H_{x,\et}^{u+v}(X,\Q_\ell/\Z_\ell(2)) \Rightarrow H_\et^{u+v}(X,\Q_\ell/\Z_\ell(2)).$$
By the relative cohomological purity theorem (cf. \cite[Theorem VI.5.1]{Milne5}), 
we have the following long exact sequence:
$$\cdots \to H_\et^3(X,\Q_\ell/\Z_\ell(2)) \to H_\ur^3(k(X),\Q_\ell/\Z_\ell(2)) \to 
CH^2(X)\otimes \Q_\ell/\Z_\ell \overset{\r^2_{\Q_\ell/\Z_\ell}}{\longrightarrow} H_\et^4(X,\Q_\ell/\Z_\ell(2)) \to \cdots.$$
To prove the finiteness of $H_\ur^3(k(X),\Q_\ell/\Z_\ell(2))$, 
we need to show that of $\Ker(\r^2_{\Q_\ell/\Z_\ell})$. 
By the assumptions and Proposition \ref{TBconsequence}, 
the cycle class map 
$\r_{\Z_\ell}^2:CH^2(X)\otimes \Z_\ell \to H_\et^4(X,\Z_\ell(2))$ is injective with finite cokernel. 
Based on these facts, we consider the following diagram with exact rows and 
columns:
{\small 
$$\xymatrix{
{ } & { } & { } & 0 \ar[d] \ar[r] & \Ker(\r_{\Q_\ell/\Z_\ell}^2) \ar[d] & { } \\
0 \ar[r] & CH^2(X)_{\ell\text{-}\tors} \ar[d]^\r \ar[r] & CH^2(X)\otimes \Z_\ell \ar[d]^{\varrho_{\Z_\ell}^2} \ar[r] & CH^2(X) \otimes \Q_\ell \ar[d]^{\r_{\Q_\ell}^2}_{\simeq} \ar[r] & CH^2(X) \otimes \Q_\ell/\Z_\ell \ar[d]^{\r_{\Q_\ell/\Z_\ell}^2}  \ar[r] & 0 \\
0 \ar[r] & H_\et^3(X,\Q_\ell/\Z_\ell(2)) \ar[r]^{\ \ (*)} \ar[d] & H_\et^4(X,\Z_\ell(2)) \ar[r] \ar[d] & H_\et^4(X,\Q_\ell(2)) \ar[r] \ar[d] & H_\et^4(X,\Q_\ell/\Z_\ell(2)) & { } \\ 
0 \ar[r] & \Coker(\r) \ar[r] & { } \Coker(\r_{\Z_\ell}^2) \ar[r] & 0. & { }
}$$}

\noindent
The arrow $(*)$ is injective because $H_\et^3(X,\Q_\ell(2))=0$ by the arguments 
in the proof of Theorem \ref{DGconsequence} (2). 
Since $H_\et^3(X,\Q_\ell/\Z_\ell(2))$ is finite by Theorem \ref{DGconsequence} (3), $\Coker(\r)$ is also finite, 
and the map $H_\et^4(X,\Z_\ell(2)) \to H_\et^4(X,\Q_\ell(2))$ has finite kernel. 
By a diagram chase, we see $\Ker(\r^2_{\Q_\ell/\Z_\ell})$ is finite. 
As a result, $H_\ur^3(k(X),\Q_\ell/\Z_\ell(2))$ is finite.

\medskip
(b) By a result of Milne (\cite[p.95]{Milne2}), 
${\boldsymbol{TB}^{\boldsymbol{2}}(X)}$ yields 
an isomorphism $H_\et^5(X,\Z(2)) \overset{\simeq}{\longrightarrow} 
H_\et^5(X,\hZ(2))_\tors$. 
Hence, we need to prove the finiteness of $H_\et^5(X,\hZ(2))_\tors$. 
On the other hand, ${\boldsymbol{TB}^{\boldsymbol{2}}(X)}$ implies 
Tate's partial 
semisimplicity conjecture ${\boldsymbol{SS}^{\boldsymbol{2}}(X,\ell)}$ for all $\ell$ 
(Theorem \ref{TBconjecture}), and 
${\boldsymbol{SS}^{\boldsymbol{2}}(X,\ell)}$ for all $\ell$ implies the finiteness of $H_\et^5(X,\hZ(2))_\tors$ 
by another result of Milne (\cite[Introduction \& Proposition 6.6]{Milne2}). 
This finishes our proof.

\medskip
(c) Fix a prime number $\ell \neq p$. Then we have 
$$CH^2(X) \otimes \Q_\ell \simeq H_\et^4(\overline{X},\Q_\ell(2))^\Gamma \simeq 
H_\et^4(\overline{X},\Q_\ell(2))_\Gamma$$
by Proposition \ref{TBconsequence} (1) and $\boldsymbol{SS}^{\boldsymbol{2}}(X,\ell)$. 
Hence the assertion follows from the Poincar\'e duality and Theorem \ref{TBconjecture}. 
\end{proof}

\section{Proof of the higher Chow formula (Theorem \ref{mainthm2})}\label{sec:proof}

In this section, we prove Theorem \ref{conjecturetheorem} (a),(d) and 
higher Chow formula (B) stated in Theorem \ref{mainthm2}.

\subsection{Proof of Theorem \ref{conjecturetheorem} (d)}\label{subsec:rewrite}

We would like to rewrite \'etale cohomology groups $H_\et^i(X,\hZ(2))$ for $i\neq 4,5$ in terms of 
higher Chow groups. 
These are calculated as follows: 

\begin{lemma}\label{tech}
We have the following isomorphisms$:$ 
$$H^0_\et(X,\hZ(2))=0,\ H^i_\et(X,\hZ(2)) \simeq H^{i-1}_\et(X,\Q/\Z(2)) \simeq CH^2(X,4-i)_\tors\ \ \ (i=1,2,3),$$
$$(H^i_\et(X,\hZ(2)))^*\simeq H^{9-i}_\et(X,\Q/\Z(2)) \simeq CH^2(X,i-6)_\tors\ \ (i=7,8,9),\ \ H_\et^i(X,\hZ(2))=0\ \ (i\geq 10),$$
where $*$ means the Pontryagin dual. Moreover $CH^2(X,i)_\tors=0$ for $i\geq 4$. 
\end{lemma}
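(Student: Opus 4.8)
The plan is to express each $H^i_\et(X,\hZ(2))$ through the comparison between étale motivic cohomology and the étale sheafified cycle complex $\Z(2)$, and then to invoke the Geisser--Levine style identification of étale motivic cohomology with Bloch's higher Chow groups in a suitable range. Concretely, for $i \leq 3$ the key input is that $H^i_\et(X,\Z(2))$ is a torsion group: indeed, $H^i_{\Zar}(X,\Z(2)) = CH^2(X, 4-i)$ vanishes rationally for $i < 4$ below the ``diagonal'' by the dimension estimates on Bloch's complex (the groups $z^2(X,j)$ are zero for $j$ large relative to $\dim X$ is not quite what is needed, but $CH^2(X,j)\otimes\Q$ vanishes for $j\geq 1$ here modulo the Beilinson--Soulé / Parshin-type vanishing that $\boldsymbol{TB}^{\boldsymbol 2}$ and finite-field hypotheses provide), so the cycle class map forces $H^i_\et(X,\Z(2))$ to be torsion for $i \leq 3$; hence it agrees with $H^{i-1}_\et(X,\Q/\Z(2))$ via the Bockstein sequence attached to $0 \to \Z(2) \to \Q(2) \to \Q/\Z(2) \to 0$ together with $H^i_\et(X,\Q(2)) = H^i_{\Zar}(X,\Q(2))\otimes\ldots = 0$ in that range. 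Then I would use the long exact sequence of Proposition \ref{kaita} (3), $H^i_\et(X,\hZ(2))\otimes\Q = 0$ for $i\neq 4,5$ by Theorem \ref{DGconsequence}, to get $H^i_\et(X,\hZ(2)) \simeq H^{i-1}_\et(X,\Q/\Z(2))$ for $i\leq 3$, and similarly $H^0_\et = 0$ from the vanishing of the relevant cycle groups in negative simplicial degree.

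Next, to pass from $H^{i-1}_\et(X,\Q/\Z(2))$ to $CH^2(X,4-i)_\tors$, I would compare the étale cycle complex with the Zariski one. The étale-to-Zariski comparison (Beilinson--Lichtenbaum in weight $2$, which is available unconditionally here since weight $2$ is within the known range via Merkurjev--Suslin / Geisser--Levine) gives $H^j_\et(X,\Z/m(2)) \simeq H^j_{\Zar}(X,\Z/m(2)) = CH^2(X, 4-j; \Z/m)$ for $j \leq 3$, and hence an exact sequence $0 \to CH^2(X,4-j)\otimes\Q/\Z \to H^j_\et(X,\Q/\Z(2)) \to CH^2(X,3-j)_\tors \to 0$. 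Since $CH^2(X,4-j)\otimes\Q/\Z = 0$ for $4-j \geq 1$ (these higher Chow groups are finitely generated torsion, by the argument below, so divisible-by-zero), one is left with $H^j_\et(X,\Q/\Z(2)) \simeq CH^2(X, 3-j)_\tors$; reindexing with $j = i-1$ yields the claimed $H^i_\et(X,\hZ(2)) \simeq CH^2(X,4-i)_\tors$ for $i = 1,2,3$.

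For the high-degree range $i = 7,8,9$ I would simply dualize: Milne's duality (Theorem \ref{Milneduality}) gives $(H^i_\et(X,\hZ(2)))^* \simeq H^{9-i}_\et(X,\Q/\Z(2))$, and then the previous paragraph applied with $9-i \in \{0,1,2\}$ identifies $H^{9-i}_\et(X,\Q/\Z(2))$ with $CH^2(X, 4-(9-i+1))_\tors = CH^2(X, i-6)_\tors$ once one checks the index bookkeeping (here $9-i \leq 2$ so the relevant $CH^2(X,\ast)\otimes\Q/\Z$ term again vanishes). The vanishing $H^i_\et(X,\hZ(2)) = 0$ for $i \geq 10$ follows from cohomological dimension: $\Z(2)$ is concentrated in degrees $\leq 4$ on $X_\et$, $X$ has étale cohomological dimension $\leq 2\dim X + 1 = 9$, so $H^i_\et(X,\Z/m(2)) = 0$ for $i > 9$, and passing to the limit kills $H^i_\et(X,\hZ(2))$.

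Finally, the last assertion $CH^2(X,i)_\tors = 0$ for $i \geq 4$: by the comparison above, $CH^2(X,i)_\tors$ injects into (a subquotient of) $H^{4-i}_\et(X,\Q/\Z(2))$, which is a cohomology group in strictly negative degree, hence zero; alternatively one notes $z^2(X,j)$ already computes $CH^2(X,j)$ via a complex whose higher Chow groups in degrees $i \geq 4$ are known to be uniquely divisible (rationally they vanish by Parshin/$\boldsymbol{TB}^{\boldsymbol 2}$-type input over $\F_q$, and the torsion is controlled by $H^{4-i}_\et$ which vanishes). The main obstacle I anticipate is making the étale-versus-Zariski comparison and the vanishing of the $CH^2(X,\ast)\otimes\Q/\Z$ (equivalently, finite generation of these higher Chow groups) fully rigorous without circularity — in particular ensuring that the input needed here is genuinely among the statements already available (Beilinson--Lichtenbaum in weight $2$, Geisser--Levine, and the finiteness consequences of $\boldsymbol{TB}^{\boldsymbol 2}(X)$) rather than something proved later in the paper; the degree/index bookkeeping between ``$z^2(X,j)$ in degree $-j$'', the shift by $-2n = -4$, and Chow-group indexing $CH^2(X,j) = H^{-j}(z^2(X,\bullet))$ must be tracked with care.
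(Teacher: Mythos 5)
Your skeleton is the paper's: the coefficient long exact sequence of Proposition \ref{kaita} (3) together with $H^i_\et(X,\hZ(2))\otimes\Q=0$ for $i\neq 4,5$ (Theorem \ref{DGconsequence} (2)) gives $H^i_\et(X,\hZ(2))\simeq H^{i-1}_\et(X,\Q/\Z(2))$ for $i\neq 4,5,6$; the Geisser--Levine/Merkur'ev--Suslin sequence $0\to CH^2(X,4-j)\otimes\Q/\Z\to H^j_\et(X,\Q/\Z(2))\to CH^2(X,3-j)_\tors\to 0$ ($j\le 3$) then identifies the right-hand sides; and Milne duality handles $i=7,8,9$. But the ``main obstacle'' you flag has a simple resolution that must \emph{not} go through finite generation of higher Chow groups (circular) or through $\boldsymbol{TB}^{\boldsymbol{2}}(X)$/Parshin-type input: this lemma is unconditional (it is exactly what proves Theorem \ref{conjecturetheorem} (d), which carries no hypotheses), and $CH^2(X,4-j)\otimes\Q/\Z$ vanishes for $j\le 3$ simply because $M\otimes\Q/\Z$ is divisible for \emph{any} abelian group $M$, it injects into $H^j_\et(X,\Q/\Z(2))$, and the latter is finite in this range by Theorem \ref{DGconsequence}; a divisible subgroup of a finite group is zero. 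Likewise your opening detour through the torsionness of $H^i_\et(X,\Z(2))$ concerns Bloch's étale-sheafified complex, a different object from $H^i_\et(X,\hZ(2))$, and is not needed anywhere in the argument.

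There is also an index slip in your proof of the last assertion: from the displayed sequence, $CH^2(X,i)_\tors$ is a quotient of $H^{3-i}_\et(X,\Q/\Z(2))$, not of $H^{4-i}_\et(X,\Q/\Z(2))$. This matters precisely at $i=4$: the degree $4-i=0$ is not strictly negative and $H^0_\et(X,\Q/\Z(2))$ is nonzero in general, whereas $3-i=-1$ does give the required vanishing. With these two corrections the proposal matches the paper's proof.
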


This proves Theorem \ref{conjecturetheorem} (d) by the result of Theorem \ref{DGconsequence} (2). 

In the proof of this lemma, 
we need the following facts and Theorem \ref{Milneduality}:

\begin{enumerate}
\item[(1)] We have the isomorphisms
$$H_\et^i(X,\hZ(2)) \simeq H_\et^{i-1}(X,\Q/\Z(2))\ \ \ {\rm for\ } i\neq 4,5,6.$$
This follows from the following long exact sequence:
$$\cdots \to H^i_\et(X,\hZ(2)) \to H^i_\et(X,\hZ(2))\otimes \Q \to H^i_\et(X,\Q/\Z(2)) 
\to H^{i+1}_\et(X,\hZ(2)) \to \cdots,$$
which is given in Proposition \ref{kaita} (3). 
\item[(2)] 
We have the isomorphisms
$$H_\et^{i-1}(X,\Q/\Z(2)) \simeq CH^2(X,4-i)_\tors\ \ {\rm for\ }i\leq 3.$$
This follows from the following exact sequence due to Geisser-Levine 
(Geisser-Levine \cite[Theorem 1.5]{GeisserLevine1},\cite{GeisserLevine2}) with Merkur'ev-Suslin's theorem (\cite{MS}) in our case:
$$0 \to CH^2(X,4-i)\otimes \Q/\Z \to H_\et^{i}(X,\Q/\Z(2)) \to CH^2(X,3-i)_\tors \to 0\ \ \ (i\leq 3),$$
and the fact that $CH^2(X,4-i)\otimes \Q/\Z$ is finite and divisible at the same time. 
\end{enumerate}

\begin{proof}[Proof of Lemma \ref{tech}]
The assertions of Lemma \ref{tech} follow from the above two facts (1),(2) and Theorem \ref{Milneduality}: 
$$H^0_\et(X,\hZ(2)) \underset{(1)}{\simeq} H^{-1}_\et(X,\Q/\Z(2))=0,$$
$$H^i_\et(X,\hZ(2)) \underset{(1)}{\simeq} H^{i-1}_\et(X,\Q/\Z(2)) \underset{(2)}{\simeq} CH^2(X,4-i)_\tors\ \ (i=1,2,3),$$
$$\left( H^{i}_\et(X,\hZ(2)) \right)^* \underset{{\rm Theorem}\ \ref{Milneduality}}{\simeq} H^{9-i}_\et(X,\Q/\Z(2)) \underset{(2)}{\simeq} CH^2(X,i-6)_\tors\ \ (i=7,8,9).$$
For a negative integer $i$, $H_\et^i(X,\Q/\Z(2))=0$. Therefore we see $CH^2(X,i)_\tors=0$ for $i\geq 4$ by the 
above fact (2). This completes the proof. 
\end{proof}

\subsection{Proof of Theorem \ref{conjecturetheorem} (a)}\label{subsec:ell}
By Theorem \ref{finiteness} (a), we know that $H_\ur^3(k(X),\Q_\ell/\Z_\ell(2))$ is finite for all prime numbers $\ell$. 
Thus the remaining task is to prove that it is $0$ for almost all $\ell$. 
As we have already discussed in the introduction, Milne, Kahn and Chambert-Loir showed the following 
general formula on $\zeta^*(X,2)$: 

\medskip
\noindent
$(\natural)$ \hfill 
$\displaystyle \zeta^*(X,2)=(-1)^{S(2)} \cdot q^{\chi(X,\O_X,2)} \cdot \underbrace{\prod_{i\neq 4,5} 
|H_\et^i(X,\hZ(2))|^{(-1)^i}}_{(\spadesuit)} \cdot \underbrace{\frac{|\Ker (\e^4)|}{|\Coker (\e^4)|}}_{(\clubsuit)},$ 
\hfill { } \\

\medskip
\noindent
where $\e^4 : H^4_\et(X,\hZ(2)) \to H^5_\et(X,\hZ(2))$ is the cup product with 
the canonical element $1\in \hZ \simeq H_\et^1(\Spec~\F_q,\hZ)$ explained in the introduction. 
Let $\ell$ be a prime number. We know that the $\ell$-adic part of $(\spadesuit)$ is $1$ for almost all $\ell$ by 
Theorem \ref{DGconsequence} (2). Therefore, we see from the above equality of rational numbers for $\zeta^*(X,2)$ that the $\ell$-adic part of $(\clubsuit)$ is $1$ for almost all $\ell$.  
In the below, we will relate the $\ell$-adic part of $(\clubsuit)$ to the unramified cohomology group 
$H_\ur^3(k(X),\Q_\ell/\Z_\ell(2))$ for all prime numbers $\ell$. 

We use a similar argument as in \cite[Appendix B]{SaitoSato}. We consider the following diagram:

$$\xyoption{curve}
\xymatrix{
CH^2(X)\otimes \Z_\ell \ar[rrr]^{\Theta_\ell\ \ \ \ \ \ \ \ \ \ } \ar[dd]_{\alpha_\ell} & & & \Hom_{\Z_\ell\text{-}\mod}(CH^2(X)\otimes \Z_\ell,\Z_\ell)\\
{ } & & & \Hom_{\Z_\ell\text{-}\mod}(H_\et^4(X,\Z_\ell(2)),\Z_\ell) \ar[u]_{\beta_{2,\ell}} \\
H^4_\et(X,\Z_\ell(2)) \ar[rrr]_{\e^4_\ell} & & & H^5_\et(X,\Z_\ell(2)) \ar@{->>}[u]_{\beta_{1,\ell}} \ar@/_6pc/[uu]_{\beta_\ell}
.}$$
Here $\e_\ell^4:H_\et^4(X,\Z_\ell(2)) \to H_\et^5(X,\Z_\ell(2))$ is the cup product with canonical element $1\in \Z_\ell \simeq H_\et^1(\Spec~\F_q,\Z_\ell)$, $\alpha_\ell:CH^2(X)\otimes \Z_\ell \to H^4_\et(X,\Z_\ell(2))$ is the codimension two cycle class map, 
$\beta_{1,\ell}$ is the map induced by the following pairing
$$H_\et^4(X,\Z_\ell(2)) \times H^5_\et(X,\Z_\ell(2)) \to H^9_\et(X,\Z_\ell(2)) \simeq \Z_\ell$$
due to Milne (\cite[Lemma 5.3]{Milne2}), and $\beta_{2,\ell}$ is the induced map 
by the cycle class map $\alpha_\ell$. We denote $\beta_\ell$ as a composition $\beta_{2,\ell} \circ \beta_{1,\ell}$. 
By Milne's result (\cite[Lemma 5.4]{Milne2}),  
this diagram commutes. We have the following facts:

\medskip
\begin{enumerate}
\item Assuming $\boldsymbol{TB}^{\boldsymbol{2}}(X)$, 
$\alpha_\ell$ is injective by Theorem \ref{TBconsequence} (2). 
\item Assuming $\boldsymbol{TB}^{\boldsymbol{2}}(X)$ and 
the finite generation of $CH^2(X)$, 
we have the finiteness of $\Coker (\alpha_\ell)$ and $|\Coker(\alpha_\ell)|=|H_\ur^3(k(X),\Q_\ell/\Z_\ell(2))|$  by Theorem \ref{finiteness} (a) (\cite[Proposition 4.3.5]{SaitoSato}).  
\item $\beta_{1,\ell}$ is surjective and $\Ker (\beta_{1,\ell})=H^5_\et(X,\Z_\ell(2))_\tors$ by a result of Milne (\cite[Lemma 5.3]{Milne2}). We remark that $H_\et^5(X,\Z_\ell(2))_\tors$ is the $\ell$-primary torsion part of $H_\et^5(X,\Z(2))$ because 
there is an isomorphism $H_\et^5(X,\Z(2)) \overset{\simeq}{\longrightarrow} H_\et^5(X,\hZ(2))_\tors$ 
under $\boldsymbol{TB}^{\boldsymbol{2}}(X)$ by Milne (\cite[p.95]{Milne2}).
\item Since we assume $\boldsymbol{TB}^{\boldsymbol{2}}(X)$ and the finite generation of $CH^2(X)$, $\beta_{2,\ell}$ is injective by the finiteness of $\Coker(\alpha_\ell)$ and Theorem \ref{conjecturetheorem} (a), 
and $\Ker(\beta_\ell)=\Ker(\beta_{1,\ell})$.  
\item By Theorem \ref{TBconsequence} (2), $\Ker(\Theta_\ell)=CH^2(X)_{\ell\text{-}\tors}$ and $|\Coker(\Theta_\ell)|=\left< R_1 \right>_\ell$. Here 
$CH^2(X)_{\ell\text{-}\tors}$ denotes the $\ell$-primary torsion part of $CH^2(X)$, and 
we denote $\left< N \right>_\ell:=\ell^{\ord_\ell(N)}$ for a natural number $N$. 
\end{enumerate}
We prove the following fact concerning the cokernel of $\beta_\ell$:

\begin{lemma}\label{sublemma1} Assume $\boldsymbol{TB}^{\boldsymbol{2}}(X)$ and 
that $CH^2(X)$ is finitely generated. Then 
we have the following equality for all $\ell$$:$
$$|\Coker(\beta_\ell)|=|\Coker(\beta_{2,\ell})|=\frac{|H_\ur^3(k(X),\Q_\ell/\Z_\ell(2))| \cdot | CH^2(X)_{\ell\text{-}\tors}|}{|H^4_\et(X,\Z_\ell(2))_\tors|}.$$
\end{lemma}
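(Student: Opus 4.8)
The plan is to compute $|\Coker(\beta_\ell)|$ by a direct diagram chase in the commutative triangle relating $\Theta_\ell$, $\e^4_\ell$, $\beta_{1,\ell}$ and $\beta_{2,\ell}$, using the five enumerated facts listed just before the lemma. First I would observe that since $\beta_\ell = \beta_{2,\ell}\circ\beta_{1,\ell}$ and $\beta_{1,\ell}$ is \emph{surjective} (fact (3)), we immediately have $\Im(\beta_\ell)=\Im(\beta_{2,\ell})$, hence $\Coker(\beta_\ell)=\Coker(\beta_{2,\ell})$ as the target $\Hom_{\Z_\ell}(CH^2(X)\otimes\Z_\ell,\Z_\ell)$ is the same; this settles the first equality in the statement for free.

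For the second equality I would analyze $\beta_{2,\ell}$, the map $\Hom_{\Z_\ell}(H^4_\et(X,\Z_\ell(2)),\Z_\ell)\to\Hom_{\Z_\ell}(CH^2(X)\otimes\Z_\ell,\Z_\ell)$ induced by precomposition with $\alpha_\ell$. By facts (1) and (2), $\alpha_\ell$ is injective with finite cokernel of order $|H^3_\ur(k(X),\Q_\ell/\Z_\ell(2))|$. Dualizing the short exact sequence $0\to CH^2(X)\otimes\Z_\ell \xrightarrow{\alpha_\ell} H^4_\et(X,\Z_\ell(2))\to \Coker(\alpha_\ell)\to 0$ and applying $\Hom_{\Z_\ell}(-,\Z_\ell)$, the long exact sequence reads
$$0 \to \Hom(\Coker(\alpha_\ell),\Z_\ell) \to \Hom(H^4_\et(X,\Z_\ell(2)),\Z_\ell) \xrightarrow{\beta_{2,\ell}} \Hom(CH^2(X)\otimes\Z_\ell,\Z_\ell) \to \Ext^1_{\Z_\ell}(\Coker(\alpha_\ell),\Z_\ell) \to \cdots.$$
Since $\Coker(\alpha_\ell)$ is finite, $\Hom(\Coker(\alpha_\ell),\Z_\ell)=0$ (so $\beta_{2,\ell}$ is injective, recovering fact (4)), and $\Ext^1_{\Z_\ell}(\Coker(\alpha_\ell),\Z_\ell)\simeq \Coker(\alpha_\ell)$ noncanonically. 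One must then check that the connecting map surjects onto this $\Ext^1$; this follows because $\Hom(H^4_\et(X,\Z_\ell(2)),\Z_\ell)$ is free over $\Z_\ell$ (it is the $\Z_\ell$-dual of a finitely generated module) while $\Ext^1$ of the next term $\Ext^1(H^4_\et(X,\Z_\ell(2)),\Z_\ell)=\Ext^1$ of a finitely generated module is itself, and the relevant portion of the sequence forces surjectivity onto $\Ext^1(\Coker(\alpha_\ell),\Z_\ell)$. Hence $|\Coker(\beta_{2,\ell})|=|\Ext^1_{\Z_\ell}(\Coker(\alpha_\ell),\Z_\ell)|=|\Coker(\alpha_\ell)|=|H^3_\ur(k(X),\Q_\ell/\Z_\ell(2))|$.

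But the claimed formula has an extra factor $|CH^2(X)_{\ell\text{-}\tors}|/|H^4_\et(X,\Z_\ell(2))_\tors|$, so the naive dualization above is not quite matching the target group in the lemma: the point is that in the lemma's triangle $\Theta_\ell$ has target $\Hom_{\Z_\ell}(CH^2(X)\otimes\Z_\ell,\Z_\ell)$, and one must reconcile $\Coker(\beta_{2,\ell})$ computed as a map \emph{into} this module with the torsion contributions. More carefully, I would instead dualize using the identification $CH^2(X)\otimes\Z_\ell \simeq (CH^2(X)/\text{tors})\otimes\Z_\ell \oplus CH^2(X)_{\ell\text{-}\tors}$, note $\Hom_{\Z_\ell}(CH^2(X)\otimes\Z_\ell,\Z_\ell)=\Hom_{\Z_\ell}((CH^2(X)/\text{tors})\otimes\Z_\ell,\Z_\ell)$ kills the torsion, and similarly $H^4_\et(X,\Z_\ell(2))$ decomposes into its free part and $H^4_\et(X,\Z_\ell(2))_\tors$. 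Then $\beta_{2,\ell}$ factors as the dual of the \emph{induced map on free quotients}, whose cokernel order picks up the torsion correction: comparing the snake-lemma exact sequences for $\alpha_\ell$ and for its free-quotient version yields exactly the factor $|CH^2(X)_{\ell\text{-}\tors}|\cdot|H^4_\et(X,\Z_\ell(2))_\tors|^{-1}\cdot|\Coker(\alpha_\ell)|$, since $\alpha_\ell$ sends $CH^2(X)_{\ell\text{-}\tors}$ injectively into $H^4_\et(X,\Z_\ell(2))_\tors$ (by Theorem \ref{TBconsequence} (2)). Putting these together with fact (2) gives the asserted equality.

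The main obstacle I anticipate is bookkeeping the torsion carefully: one has three potentially different torsion groups in play ($CH^2(X)_{\ell\text{-}\tors}$, $H^4_\et(X,\Z_\ell(2))_\tors$, and $\Coker(\alpha_\ell)$), and one must track precisely how $\alpha_\ell$ interacts with torsion and how dualizing over $\Z_\ell$ (which is not exact and introduces $\Ext^1$ terms) redistributes these orders. Everything else — the reduction of $\Coker(\beta_\ell)$ to $\Coker(\beta_{2,\ell})$ via surjectivity of $\beta_{1,\ell}$, and the finiteness statements — is formal given the enumerated facts and Theorem \ref{finiteness} (a).
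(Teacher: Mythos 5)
Your final argument is correct and lands on essentially the same computation as the paper: both proofs dualize the short exact sequence $0 \to CH^2(X)\otimes\Z_\ell \xrightarrow{\alpha_\ell} H^4_\et(X,\Z_\ell(2)) \to G \to 0$, where $G=\Coker(\alpha_\ell)$ is finite of order $|H_\ur^3(k(X),\Q_\ell/\Z_\ell(2))|$. The paper simply writes out the six-term $\Hom$--$\Ext^1$ exact sequence over $\Z_\ell$ (which terminates because $\Z_\ell$ has $\Ext$-dimension one), identifies $\Ext^1_{\Z_\ell}(M,\Z_\ell)\simeq\Hom(M_\tors,\Q_\ell/\Z_\ell)$ for finitely generated $M$, and reads off
$|\Coker(\beta_{2,\ell})| = |G|\cdot|CH^2(X)_{\ell\text{-}\tors}|\cdot|H^4_\et(X,\Z_\ell(2))_\tors|^{-1}$
from multiplicativity of orders in the resulting four-term exact sequence of finite groups. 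Your free-quotient reduction plus the snake lemma applied to the torsion/free decomposition computes exactly the same quantity (the order of $\Coker$ of the induced map on free quotients), using the injectivity of $\alpha_\ell$ on torsion from Theorem \ref{TBconsequence}~(2) in the same essential way; the two presentations are interchangeable, with the paper's version packaging the torsion bookkeeping into the $\Ext^1$ terms rather than into a separate snake-lemma diagram. Your reduction of $\Coker(\beta_\ell)$ to $\Coker(\beta_{2,\ell})$ via surjectivity of $\beta_{1,\ell}$ is also how the paper handles the first equality.

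One concrete error to flag, even though you caught its symptom yourself: in the first half of your second paragraph you assert that the connecting map $\Hom(CH^2(X)\otimes\Z_\ell,\Z_\ell)\to\Ext^1_{\Z_\ell}(G,\Z_\ell)$ is surjective, concluding $|\Coker(\beta_{2,\ell})|=|G|$. This is false in general: the sequence continues $\Ext^1(G,\Z_\ell)\to\Ext^1(H^4_\et(X,\Z_\ell(2)),\Z_\ell)\to\Ext^1(CH^2(X)\otimes\Z_\ell,\Z_\ell)\to 0$, and the first of these maps is the Pontryagin dual of the (surjective) map $H^4_\et(X,\Z_\ell(2))_\tors \to H^4_\et(X,\Z_\ell(2))_\tors/CH^2(X)_{\ell\text{-}\tors}\subseteq G$, which is nonzero whenever $|H^4_\et(X,\Z_\ell(2))_\tors|>|CH^2(X)_{\ell\text{-}\tors}|$. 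The freeness of $\Hom(H^4_\et(X,\Z_\ell(2)),\Z_\ell)$ is irrelevant to this surjectivity. You correctly observed that the resulting formula contradicts the statement and discarded it; the corrected computation you give afterwards is the valid one and is what should stand as the proof.
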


\noindent
We first finish the proof of Theorem 1 (a) admitting this lemma. 
By the above facts 1--5 and the assumptions in $(\star \star)$ and Theorem \ref{finiteness}, 
the maps $\Theta_\ell,\ \alpha_\ell,\ \beta_\ell$ and $\e^4_\ell$ have 
finite kernel and cokernel. Hence, we have the following equality of rational numbers: 

\medskip
\noindent
$(\flat)$ \hfill 
$\displaystyle \frac{|\Ker (\Theta_\ell)|}{|\Coker (\Theta_\ell)|}=
\frac{|\Ker(\alpha_\ell)|}{|\Coker(\alpha_\ell)|} \cdot \frac{|\Ker(\e^4_\ell)|}{|\Coker(\e^4_\ell)|} \cdot 
\frac{|\Ker(\beta_\ell)|}{|\Coker(\beta_\ell)|}.$ \hfill { } \\

\noindent
By Lemma \ref{sublemma1} and the above facts 1--5, 
we rewrite this equality $(\flat)$ as follows:
$$|H_\ur^3(k(X),\Q_\ell/\Z_\ell(2))|^2 = 
\underbrace{\frac{|\Ker(\e_\ell^4)|}{|\Coker(\e_\ell^4)|}}_{\ell\text{-}{\rm adic\ part\ of\ }(\clubsuit)} \!\!\! \cdot \ 
\frac{\left< R_1 \right>_\ell \cdot |H_\et^4(X,\Z_\ell(2))_\tors| \cdot |H_\et^5(X,\Z_\ell(2))_\tors|}{|CH^2(X)_{\ell\text{-}\tors}|^2}.$$
Since we assumed $\boldsymbol{TB}^{\boldsymbol{2}}(X)$ and the finite generation of $CH^2(X)$, both 
$\left< R_1 \right>_\ell$ and $|CH^2(X)_{\ell\text{-}\tors}|$ are $1$ for almost all $\ell$.  
$|H_\et^5(X,\Z_\ell(2))_\tors|$ is $1$ for almost all $\ell$ under $\boldsymbol{TB}^{\boldsymbol{2}}(X)$ by the above fact 3 and Theorem \ref{finiteness} (b). 
$|H_\et^4(X,\Z_\ell(2))_\tors|$ is also $1$ for almost all $\ell$ by Theorem \ref{DGconsequence} (3). 
Therefore, $|H_\ur^3(k(X),\Q_\ell/\Z_\ell(2))|^2$ is $1$ for almost all $\ell$, and so is 
$|H_\ur^3(k(X),\Q_\ell/\Z_\ell(2))|$. This proves Theorem \ref{conjecturetheorem} (a). 
Finally, we give a proof of the above Lemma \ref{sublemma1}:

\begin{proof}[Proof of Lemma \ref{sublemma1}]
We fix a prime number $\ell$. Since we assume $\boldsymbol{TB}^{\boldsymbol{2}}(X)$ and that $CH^2(X)$ is 
finitely generated, there is a following exact sequence
$$0\to CH^2(X)\otimes \Z_\ell \to H^4_\et(X,\Z_\ell(2)) \to G \to 0,$$
where $G$ is a group whose order is equal to $H_\ur^3(k(X),\Q_\ell/\Z_\ell(2))$ which is finite 
by our assumptions and Theorem \ref{finiteness} (a). Applying the $\Ext$-functor $\Ext^*_{\Z_\ell\text{-}\mod}(-,\Z_\ell)$ for the above sequence of $\Z_\ell$-modules, we get the following long exact sequence: 
$$0 \to \Hom_{\Z_\ell\text{-}\mod} (G,\Z_\ell) \to \Hom_{\Z_\ell\text{-}\mod} (H^4_\et(X,\Z_\ell(2)),\Z_\ell) \overset{\beta_{2,\ell}}{\longrightarrow} 
\Hom_{\Z_\ell\text{-}\mod} (CH^2(X) \otimes \Z_\ell,\Z_\ell)$$
$$\ \ \ \ \ \ \to \Ext_{\Z_\ell\text{-}\mod}^1(G,\Z_\ell) \to \Ext_{\Z_\ell\text{-}\mod}^1 (H^4_\et(X,\Z_\ell(2)),\Z_\ell) \to 
\Ext_{\Z_\ell\text{-}\mod}^1 (CH^2(X) \otimes \Z_\ell,\Z_\ell)\to 0.$$
We denote the map $\Hom_{\Z_\ell\text{-}\mod}(H_\et^4(X,\Z_\ell(2)),\Z_\ell) 
\to \Hom_{\Z_\ell\text{-}\mod}(CH^2(X)\otimes \Z_\ell, \Z_\ell)$ in the 
above sequence as $\beta_{2,\ell}$. 
This sequence of $\Ext$-groups breaks up at degree one, because 
$\Z_\ell$ has $\Ext$-dimension one.  
By the finiteness of $G$, $\Hom_{\Z_\ell\text{-}\mod}(G,\Z_\ell)=0$. 
Since the $\ell$-adic \'etale cohomology groups 
$H_\et^4(X,\Z_\ell(2))$ is finitely generated 
(Theorem \ref{DGconsequence} (1)), the quotient 
$H_\et^4(X,\Z_\ell(2))/H_\et^4(X,\Z_\ell(2))_\tors$ is free, 
{\em i.e.}, projective. For a projective $\Z_\ell$-module $M$, 
we have $\Ext^q_{\Z_\ell\text{-}\mod}(M,\Z_\ell)=0\ \ (q\geq 1)$, and have 
an isomorphism 
$$\Ext^1_{\Z_\ell\text{-}\mod}(M,\Z_\ell) \simeq \Hom(M_\tors,\Q_\ell/\Z_\ell).$$
Using this fact, one can rewrite $\Ext^1$-groups in the above long exact sequence 
as follows:

\medskip
$\displaystyle \Ext^1_{\Z_\ell\text{-}\mod}(G,\Z_\ell) \simeq \Hom_{\Z_\ell\text{-}\mod}(G,\Q_\ell/\Z_\ell),$

\medskip
$\displaystyle \Ext^1_{\Z_\ell\text{-}\mod}(H^4_\et(X,\Z_\ell(2)),\Z_\ell) 
\simeq \Hom_{\Z_\ell\text{-}\mod} (H^4_\et(X,\Z_\ell(2))_\tors,\Q_\ell/\Z_\ell),$

\medskip
$\displaystyle \Ext^1_{\Z_\ell\text{-}\mod}(CH^2(X)\otimes \Z_\ell,\Z_\ell) 
\simeq \Hom_{\Z_\ell\text{-}\mod}(CH^2(X)_{\ell\text{-}\tors},\Q_\ell/\Z_\ell).$

\medskip
\noindent
For a finite $\Z_\ell$-module $M$, 
we have $|\Hom_{\Z_\ell \text{-} \mod}(M,\Q_\ell/\Z_\ell)|=|M|$. 
Combining these facts, we finish the proof of Lemma \ref{sublemma1}, and that of Theorem 1 (a). 
\end{proof}

\subsection{Proof of the higher Chow formula} 
To obtain Theorem \ref{mainthm2}, 
we rewrite each term in Milne-Kahn-Chambert-Loir formula $(\natural)$ for $\zeta^*(X,2)$ 
in terms of higher Chow groups and unramified cohomology groups. 
By Lemma \ref{tech}, 
it remains to compute 
$H^6_\et(X,\hZ(2))$ and $(\clubsuit$). 
We have isomorphisms of finite groups 
$$H^6_\et(X,\hZ(2)) \underset{{\rm Theorem\ \ref{Milneduality}}}{\simeq} \left( H_\et^3(X,\Q/\Z(2)) \right)^* \simeq 
\left( H_\et^4(X,\hZ(2))_\tors \right)^*,$$
where the latter isomorphism follows from the 
following short exact sequence 
$$0 \to H^3_\et(X,\hZ(2))\otimes \Q/\Z \to H^3_\et(X,\Q/\Z(2)) \to H^4_\et(X,\hZ(2))_\tors \to 0$$and the finiteness of $H_\et^3(X,\Q/\Z(2))$ by Theorem \ref{DGconsequence} (3). 
$H_\et^4(X,\hZ(2))_\tors$ and $(\clubsuit)$ are obtained by Lemma \ref{sublemma1} and 
taking products with respect to all prime numbers $\ell$. Let $\beta_1:H_\et^5(X,\hZ(2)) \to \Hom_\cont(H_\et^4(X,\hZ(2)), \hZ)$ be the map induced by the pairing $H_\et^4(X,\hZ(2)) \times H_\et^5(X,\hZ(2)) \to H_\et^9(X,\hZ(2)) \simeq \hZ$ 
due to Milne (\cite[Lemma 5.3]{Milne2}), and $\beta_2$ is the induced map by the codimension two cycle class map $CH^2(X)\otimes \hZ \to H_\et^4(X,\hZ(2))$. We denote $\beta$ as a composition $\beta_2 \circ \beta_1$. 
By Milne's result (\cite[Lemma 5.3]{Milne2}), we see $\Coker(\beta)=\Coker(\beta_2)$. 
We have the following Key-Lemma:

\begin{lemma3}\label{keylemma}
Assume $\boldsymbol{TB}^{\boldsymbol{2}}(X)$ and that $CH^2(X)$ is finitely generated. 
Then $\beta$ and $\beta_2$ have finite cokernel, and the following equality holds$:$
$$|\Coker(\beta)|=|\Coker(\beta_2)|=\frac{|H_\ur^3(k(X),\Q/\Z(2))| \cdot |CH^2(X)_\tors|}{|H^4_\et(X,\hZ(2))_\tors|}.$$
\end{lemma3}

\begin{proof}
We have the following isomorphism:
$$\Coker(\beta)=\prod_{{\rm all\ primes\ }\ell} \Coker(\beta_{2,\ell}).$$
As we have seen in $\x$\ref{subsec:ell}, 
$|H_\ur^3(k(X),\Q_\ell/\Z_\ell(2))|$, $|CH^2(X)_\tors \otimes \Z_\ell|$ and 
$|H_\et^4(X,\Z_\ell(2))_\tors|$ are all $1$ for almost all $\ell$ under the assumptions of $\boldsymbol{TB}^{\boldsymbol{2}}(X)$ and the finite generation of $CH^2(X)$. 
Thus taking product of the equality in Lemma \ref{sublemma1} with respect to all prime numbers $\ell$ 
finishes our proof. 
\end{proof}

This completes the proof of our higher Chow formula.

\section{Comparison with Geisser's formula}\label{sec:comparison}

In this section, we compare our higher Chow formula (B) with Geisser's formula 
written in Weil-\'etale cohomology groups. 
Before starting our comparison arguments, 
we will recall basic properties of Weil-\'etale cohomology briefly. 

\subsection{Basics on Weil-\'etale motivic theory}\label{setup}\index{Weiletale@Weil-\'etale cohmology} 

Let $\Gamma = \Gal(\oF_q/\F_q)$ be the absolute Galois group 
of $\F_q$, and let $\Gamma_0\simeq \Z$ be 
the Weil subgroup of $\Gamma$ generated by the 
Frobenius element. Let $X$ be a smooth variety defined over $\F_q$, and 
let $n$ be a non-negative integer. 

Usual \'etale sheaves on $\bX$ arising from $X$ are equipped with $\Gamma$-action. 
A Weil-\'etale sheaf on $\bX$ is defined as an \'etale sheaf on $\bX$ equipped with $\Gamma_0$-action. 
If we denote the \'etale topos on $\bX$ by $\T_\et$, 
and the Weil-\'etale topos on $\overline{X}$ by $\T_\Wet$, then 
we have a 
morphism of topoi (cf. \cite[Expos\'e IV.3]{SGA4}): 
$$\gamma=(\gamma^*,\gamma_*):\ \xymatrix{
\T_\et \ar@<0.9ex>[r]^{\gamma^*}  & \T_\Wet \ar@<0.1ex>[l]^{\gamma_*}
},$$
where $\gamma^* : \T_\et \to \T_\Wet$ is the restriction functor. 

Using the \'etale motivic complex $\Z(n)$ defined in $\x$\ref{finitenessresult}, 
we define the Weil-\'etale motivic cohomology as the Weil-\'etale hypercohomology:
$$H_\Wet^i(X,\Z(n)) := \HH_\Wet^i(X,\gamma^* \Z(n)).$$
We also define $H_\Wet^i(X,A(n))$ for an abelian group $A$ for by 
tensoring to the \'etale motivic complex $\Z(n) \otimes A$. 
We are mainly concerned with case $A \in \{ \Z,\ \Q,\ \Z/m\Z,\ \Q/\Z \}$. 
By results of Geisser \cite[Theorem 7.1]{Geisser1}, Weil-\'etale motivic cohomology groups  are related with \'etale cohomology groups and higher Chow groups as follows:

\begin{theorem}[Geisser]\label{G-tech}
Let $X$ be a smooth variety defined over a finite field $\F_q$. 

\begin{enumerate}
\item[(1)] There is a long exact sequence 
$$ \cdots \to H_\et^i(X,\Z(n)) \to H_\Wet^i(X,\Z(n)) \to H_\et^{i-1}(X,\Q(n)) \to H_\et^{i+1}(X,\Z(n)) \to \cdots.$$

\item[(2)] For torsion coefficients, Weil-\'etale motivic cohomology is isomorphic to \'etale cohomology for all degrees, {\em i.e.,} we have the following for all $i$ and $n$$:$ 
$$H_\Wet^{i}(X,\Z/m\Z(n)) \simeq H_\et^i(X,\Z/m\Z(n)),$$
where $\Z/m\Z(n)$ is as in $\x$\ref{etale}.

\item[(3)] For rational coefficient, the following holds for all $i$ and $n$$:$ 
$$H_\Wet^i(X,\Q(n)) \simeq \bigg( CH^n(X,2n-i) \oplus CH^n(X,2n-i+1) \bigg) \otimes \Q.$$
\end{enumerate}
\end{theorem}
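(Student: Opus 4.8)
The plan is to follow Geisser (\cite{Geisser1}), organizing the argument around the identification $R\Gamma_\Wet(X,\Z(n)) \simeq R\Gamma(\Gamma_0, R\Gamma_\et(\overline{X},\Z(n)))$ coming from the structure of the Weil-\'etale topos, together with the Hochschild-Serre identification $R\Gamma_\et(X,\Z(n)) \simeq R\Gamma(\Gamma, R\Gamma_\et(\overline{X},\Z(n)))$; here $R\Gamma(\Gamma_0,-)$ is ordinary group cohomology of $\Gamma_0 \simeq \Z$, so that $R\Gamma(\Gamma_0,M)$ is the two-term complex $[M \xrightarrow{1-\Fr} M]$ in degrees $0,1$, whereas $R\Gamma(\Gamma,-)$ is Galois cohomology of $\Gamma = \hZ$ acting on discrete modules, and the same discussion applies with coefficients $\Z/m\Z(n)$ or $\Q(n)$. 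I would first prove (2): each $H^j_\et(\overline{X},\Z/m\Z(n))$ is a finite $\Gamma$-module, and for a finite module $N$ the restriction $H^i(\hZ,N) \to H^i(\Z,N)$ is an isomorphism (both sides are $N^{\Fr}$ in degree $0$, $N_{\Fr}$ in degree $1$, and $0$ in degrees $\geq 2$ since $\mathrm{cd}(\hZ)=1$); comparing the two descent spectral sequences built from $R\Gamma_\et(\overline{X},\Z/m\Z(n))$ then yields $H^i_\Wet(X,\Z/m\Z(n)) \simeq H^i_\et(X,\Z/m\Z(n))$ for all $i,n$, with the $p$-primary part $\Z/p^r\Z(n)=W_r\Omega^n_{X,\log}[-n]$ handled identically since these are again finite \'etale sheaves on $\overline{X}$.

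Next I would produce the distinguished triangle $R\Gamma_\et(X,\Z(n)) \to R\Gamma_\Wet(X,\Z(n)) \to R\Gamma_\et(X,\Q(n))[-1] \xrightarrow{+1}$ and read off (1) from its long exact cohomology sequence. Let $C$ be the cone of the first map. Applying (2) for all $m$ shows $C \otimes^{\mathbf{L}} \Z/m = 0$, so $C$ has uniquely divisible cohomology, whence $C \simeq C \otimes \Q \simeq \mathrm{cone}\bigl(R\Gamma(\Gamma, V) \to R\Gamma(\Gamma_0, V)\bigr)$ with $V := R\Gamma_\et(\overline{X},\Q(n))$. The $\Gamma$-action on $V$ is smooth — it is a filtered colimit over the finite subextensions of $\oF_q/\F_q$ — so $\Fr$ acts locally finitely, hence semisimply with root-of-unity eigenvalues; thus on every module in sight $1-\Fr$ is zero on $\Gamma$-invariants and invertible on a $\Gamma$-stable complement. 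Consequently taking $\Gamma$-invariants is exact, $R\Gamma(\Gamma_0,V) = [V \xrightarrow{1-\Fr} V] \simeq R\Gamma(\Gamma,V) \oplus R\Gamma(\Gamma,V)[-1]$, and the restriction map is the inclusion of the first summand. Hence $C \simeq R\Gamma(\Gamma,V)[-1] = R\Gamma_\et(X,\Q(n))[-1]$, which establishes the triangle and therefore (1).

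Finally, (3) is obtained by tensoring the triangle with $\Q$: since the decomposition $R\Gamma(\Gamma_0,V) \simeq R\Gamma(\Gamma,V) \oplus R\Gamma(\Gamma,V)[-1]$ above is split, the connecting map $H^{i-1}_\et(X,\Q(n)) \to H^{i+1}_\et(X,\Q(n))$ vanishes, so $H^i_\Wet(X,\Q(n)) \simeq H^i_\et(X,\Q(n)) \oplus H^{i-1}_\et(X,\Q(n))$; it then remains to identify $H^j_\et(X,\Q(n))$ with $CH^n(X,2n-j)\otimes\Q$, which is the rational comparison of the \'etale and Zariski motivic complexes (the cone of $\Z(n)_{\mathrm{Zar}} \to R\epsilon_*\Z(n)_\et$ has torsion cohomology by Geisser-Levine and Merkur'ev-Suslin, hence dies rationally) together with Bloch's identification $H^j_{\mathrm{Zar}}(X,\Z(n)) = CH^n(X,2n-j)$. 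The main obstacle is the foundational input — the identification of $R\Gamma_\Wet(X,-)$ with the $\Gamma_0$-equivariant \'etale cohomology of $\overline{X}$, and the fact that $H^j_\et(\overline{X},\Q(n))$ carries a smooth rather than an $\ell$-adic $\Gamma$-action, so that $R\Gamma(\Gamma,-)$ collapses to invariants on it — which is exactly the material developed in \cite{Geisser1}; granting that, the cone computation above is the bookkeeping that assembles (1)--(3).
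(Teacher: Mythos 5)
The paper does not prove this statement at all—it is quoted verbatim from Geisser \cite[Theorem 7.1]{Geisser1}—and your reconstruction follows exactly the argument of that source: the descent identification $R\Gamma_\Wet(X,-)\simeq R\Gamma(\Gamma_0,R\Gamma_\et(\overline{X},-))$ with $\Gamma_0\simeq\Z$, the comparison $H^i(\hZ,N)\simeq H^i(\Z,N)$ for torsion coefficients, the uniquely divisible cone giving the triangle for (1), and the splitting of $[V\xrightarrow{1-\Fr}V]$ via local finiteness of the smooth $\Gamma$-action for (3). The one point to adjust is in (2): for a smooth but non-proper $X$ the groups $H^j_\et(\overline{X},\Z/p^r\Z(n))$ need not be finite (Artin--Schreier already gives an infinite $H^1_\et(\A^1_{\oF_p},\Z/p)$), so you should invoke the comparison $H^i(\hZ,N)\simeq H^i(\Z,N)$ for arbitrary discrete \emph{torsion} $\Gamma$-modules rather than only finite ones; with that replacement the argument covers the stated generality, and in the projective case actually used in this paper your finiteness hypothesis does hold.
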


We will often use this theorem for $n=2$ case in the following subsections.

\subsection{Weil-\'etale cycle class map}\label{Weiletalecycle}\index{naive@(naive) Weil-\'etale cycle class map}

Throughout this paper, we will use a Weil-\'etale cycle class map 
defined as follows; we consider the following adjunction morphism of complexes: 
$$\Z(n) \to R\gamma_*\gamma^*\Z(n).$$
Applying the hypercohomology functor for this map, 
we get the following composite map: 
$$CH^n(X)\to H_\et^{2n}(X,\Z(n)) \to H_\Wet^{2n}(X,\Z(n)).$$ 
We denote this map by $\r_\Wet^n$, and call it {\em a Weil-\'etale cycle class map}.

\subsection{Two conjectures on Weil-\'etale motivic cohomology}\label{conjj}

There are two important conjectures on Weil-\'etale motivic cohomology. 
Let $X$ be a $d$-dimensional smooth projective variety defined over a finite 
field $\F_q$. 
\begin{enumerate}
\item ${\boldsymbol{L}^{\boldsymbol{n}}(X)}$: For every $i$, $H_\Wet^i(X,\Z(n))$ is finitely generated. 
\item ${\boldsymbol{K}^{\boldsymbol{n}}(X)}$: For every prime number $\ell$, Weil-\'etale motivic cohomology group is an integral model of the $\ell$-adic \'etale cohomology, 
{\em i.e.}, the following map induced by the isomorphism in Theorem \ref{G-tech} (2) 
is an isomorphism  
$$H_\Wet^i(X,\Z(n)) \otimes \Z_\ell \overset{\simeq}{\longrightarrow} H_\et^i(X,\Z_\ell(n)).$$
\end{enumerate}

\noindent
The first conjecture ${\boldsymbol{L}^{\boldsymbol{n}}(X)}$ is proposed by Lichtenbaum, 
and the second one ${\boldsymbol{K}^{\boldsymbol{n}}(X)}$ is proposed by Kahn (\cite[Conjecture 3.2]{Kahn4}). 
The second assumption in Theorem \ref{secondmainthm} in the introduction is nothing 
other than $\boldsymbol{L}^{\boldsymbol{2}}(X)$. 
By the work due to Geisser (\cite[Theorem 8.4]{Geisser1}), 
we have the following fact concerning the relations between these conjectures:

\begin{theorem}[Geisser]\label{Geisserconjecture}
The following implications hold$:$
$${\boldsymbol{K}^{\boldsymbol{n}}(X)}+{\boldsymbol{K}^{\boldsymbol{d-n}}(X)} \Rightarrow {\boldsymbol{L}^{\boldsymbol{n}}(X)} \Rightarrow {\boldsymbol{K}^{\boldsymbol{n}}(X)} \Rightarrow {\boldsymbol{TB}^{\boldsymbol{n}}(X)}.$$
Here ${\boldsymbol{TB}^{\boldsymbol{n}}(X)}$ denotes the Tate \& Beilinson conjecture stated in $\x$\ref{pre}. 
\end{theorem}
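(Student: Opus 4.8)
The statement is a chain of three implications, and the plan is to establish each one separately, working from the strongest hypothesis toward the weakest conclusion. The last arrow, $\boldsymbol{K}^{\boldsymbol{n}}(X) \Rightarrow \boldsymbol{TB}^{\boldsymbol{n}}(X)$, should be the easiest: I would unwind the definitions using Theorem \ref{G-tech} (3), which identifies $H_\Wet^{2n}(X,\Q(n))$ with $(CH^n(X,0) \oplus CH^n(X,1)) \otimes \Q = (CH^n(X) \oplus CH^n(X,1))\otimes\Q$, together with the fact that $\boldsymbol{K}^{\boldsymbol{n}}(X)$ forces $H_\Wet^{2n}(X,\Z(n)) \otimes \Z_\ell \simeq H_\et^{2n}(X,\Z_\ell(n))$. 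Tensoring with $\Q_\ell$ and comparing with the known description of $H_\et^{2n}(X,\Q_\ell(n))$ (via the Hochschild-Serre sequence and the Weil conjecture, as in the proof of Proposition \ref{TBconsequence} (1)) shows that the cycle class map is surjective and that the relevant dimension matches $\rho_n$; one also needs finite-dimensionality of $CH^n(X)\otimes\Q$, which falls out of the same comparison since $H_\Wet^i$ carries finitely generated $\Z_\ell$-cohomology after $\otimes\Z_\ell$. This recovers exactly the statement $\boldsymbol{T}^{\boldsymbol{n}}(X,\ell)+\boldsymbol{SS}^{\boldsymbol{n}}(X,\ell)$ for a single $\ell$, hence $\boldsymbol{TB}^{\boldsymbol{n}}(X)$ by Proposition \ref{TBconjecture} and Tate's theorem.

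For the middle arrow, $\boldsymbol{L}^{\boldsymbol{n}}(X) \Rightarrow \boldsymbol{K}^{\boldsymbol{n}}(X)$, the plan is to use the long exact sequence of Theorem \ref{G-tech} (1) relating $H_\et^i(X,\Z(n))$, $H_\Wet^i(X,\Z(n))$ and $H_\et^{i-1}(X,\Q(n))$, together with the torsion-comparison isomorphism $H_\Wet^i(X,\Z/m\Z(n)) \simeq H_\et^i(X,\Z/m\Z(n))$ of part (2). Tensoring the integral statement with $\Z_\ell$ and passing to the limit, the map $H_\Wet^i(X,\Z(n))\otimes\Z_\ell \to H_\et^i(X,\Z_\ell(n))$ is an isomorphism on torsion for free reasons; the issue is the free quotients, i.e.\ whether the ranks and the lattices agree. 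Finite generation of $H_\Wet^i(X,\Z(n))$ (this is exactly $\boldsymbol{L}^{\boldsymbol{n}}(X)$) is what lets one control the $\mathrm{lim}^1$ terms and conclude that $H_\Wet^i(X,\Z(n))\otimes\Z_\ell$ is already $\ell$-adically complete and finitely generated, so that comparing mod $\ell^\nu$ for all $\nu$ (via part (2)) upgrades to an isomorphism of finitely generated $\Z_\ell$-modules. I would follow Geisser's argument in \cite[Theorem 8.4]{Geisser1} essentially verbatim here.

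The first arrow, $\boldsymbol{K}^{\boldsymbol{n}}(X)+\boldsymbol{K}^{\boldsymbol{d-n}}(X) \Rightarrow \boldsymbol{L}^{\boldsymbol{n}}(X)$, is the one I expect to be the main obstacle, since it is where genuine input beyond formal manipulation enters. The strategy is to show that each $H_\Wet^i(X,\Z(n))$ is finitely generated by (a) bounding its rank using the rational description of Theorem \ref{G-tech} (3) together with $\boldsymbol{TB}^{\boldsymbol{n}}(X)$ (which we already get from $\boldsymbol{K}^{\boldsymbol{n}}(X)$ via the third arrow, hence finite-dimensionality of the relevant higher Chow groups $\otimes\Q$), and (b) bounding the torsion: here one invokes $\boldsymbol{K}^{\boldsymbol{n}}(X)$ to identify the $\ell$-adic completion with the finitely generated group $H_\et^i(X,\Z_\ell(n))$ (Theorem \ref{DGconsequence} (1)), so the torsion is nonzero for only finitely many $\ell$ and is finite at each. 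The role of the \emph{complementary} weight $\boldsymbol{K}^{\boldsymbol{d-n}}(X)$ is to supply a Poincaré-duality pairing on Weil-\'etale cohomology that pins down the remaining cohomology groups (the "middle" degrees $i = 2n, 2n+1$ and their duals) which are not directly controlled by the $\ell$-adic comparison in a single weight — this is the analogue of how the duality Theorem \ref{Milneduality} is used to handle $H_\et^5(X,\hZ(2))$ in the proof of Theorem \ref{finiteness} (b). Assembling these rank and torsion bounds degree by degree, using that $H_\Wet^i(X,\Z(n))$ vanishes outside a finite range, gives finite generation in every degree, which is $\boldsymbol{L}^{\boldsymbol{n}}(X)$. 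For all three implications I would simply cite Geisser's \cite[Theorem 8.4]{Geisser1}, since the statement here is a direct quotation of that result.
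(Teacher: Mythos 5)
Your proposal is correct and ultimately takes the same route as the paper: the paper offers no proof of this statement at all, but simply quotes it as \cite[Theorem 8.4]{Geisser1}, which is exactly where you land. Your additional sketches of the three implications are plausible reconstructions of Geisser's arguments (and consistent with Theorem \ref{G-tech} and the duality input used elsewhere in the paper), but they are not needed here since the result is cited rather than reproved.
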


Consequently, if $\dim X=4$, then 
${\boldsymbol{L}^{\boldsymbol{2}}(X)}$ is equivalent to ${\boldsymbol{K}^{\boldsymbol{2}}(X)}$, 
and they imply $\boldsymbol{TB}^{\boldsymbol{2}}(X)$.  
In the rest of this section, we are mainly concerned with $4$-dimensional varieties $X$ satisfying $\boldsymbol{L}^{\boldsymbol{2}}(X)$.

\subsection{Weight two calculations}

Geisser calculated Weil-\'etale motivic cohomology in the case of weight $0$ and $1$ in \cite[Proposition 7.4]{Geisser1}. 
For our purpose, we need to calculate the weight $2$ case in terms of 
higher Chow groups. 
Throughout this subsection, let $X$ be a $4$-dimensional smooth projective variety 
over $\F_q$. 

\begin{lemma}\label{WetLemma}
There are isomorphisms for $i\geq 6$$:$ 
$$H_\Wet^{i}(X,\Q/\Z(2)) \underset{{\rm (a)}}{\simeq} H_\Wet^{i+1}(X,\Z(2)) \underset{{\rm (b)}}{\simeq} H_\et^{i+1}(X,\Z(2)) \underset{{\rm (c)}}{\simeq} H^i_\et(X,\Q/\Z(2)).$$
\end{lemma}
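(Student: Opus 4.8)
The plan is to establish the three isomorphisms (a), (b), (c) for $i \geq 6$ one at a time, each coming from a long exact sequence combined with a vanishing or finiteness statement already available in the excerpt.

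First I would treat (b), the comparison $H_\Wet^{i+1}(X,\Z(2)) \simeq H_\et^{i+1}(X,\Z(2))$ for $i \geq 6$, i.e. in degrees $\geq 7$. The tool is the long exact sequence of Theorem \ref{G-tech} (1),
$$\cdots \to H_\et^{j}(X,\Z(2)) \to H_\Wet^{j}(X,\Z(2)) \to H_\et^{j-1}(X,\Q(2)) \to H_\et^{j+1}(X,\Z(2)) \to \cdots,$$
so it suffices to show $H_\et^{j-1}(X,\Q(2)) = 0$ and $H_\et^{j}(X,\Q(2)) = 0$ for $j \geq 7$. Since $\Q(2) = \Z(2) \otimes \Q$, the group $H_\et^{j}(X,\Q(2))$ is, by the argument already used for higher Chow groups in $\x$\ref{subsec:rewrite} (Geisser--Levine plus the comparison of étale and Zariski motivic cohomology rationally), a direct summand of $(CH^2(X,4-j) \oplus CH^2(X,5-j)) \otimes \Q$; for $j \geq 6$ all these higher Chow groups vanish because $CH^2(X,i) = 0$ for $i < 0$. (Alternatively one invokes Theorem \ref{G-tech} (3) directly to see $H_\Wet^{j}(X,\Q(2)) = 0$ for $j \geq 6$, which combined with the rational version of the long exact sequence gives the same conclusion.) This yields (b).

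Next I would do (a), the isomorphism $H_\Wet^{i}(X,\Q/\Z(2)) \simeq H_\Wet^{i+1}(X,\Z(2))$ for $i \geq 6$. This comes from the Bockstein-type long exact sequence associated to $0 \to \Z(2) \to \Q(2) \to \Q/\Z(2) \to 0$ in the Weil-étale topology,
$$\cdots \to H_\Wet^{i}(X,\Q(2)) \to H_\Wet^{i}(X,\Q/\Z(2)) \to H_\Wet^{i+1}(X,\Z(2)) \to H_\Wet^{i+1}(X,\Q(2)) \to \cdots,$$
together with the vanishing $H_\Wet^{i}(X,\Q(2)) = H_\Wet^{i+1}(X,\Q(2)) = 0$ for $i \geq 6$ from Theorem \ref{G-tech} (3) and $CH^2(X,i) = 0$ for $i<0$. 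Finally, (c), the isomorphism $H_\et^{i+1}(X,\Z(2)) \simeq H_\et^{i}(X,\Q/\Z(2))$ for $i \geq 6$, is obtained the same way from $0 \to \Z(2) \to \Q(2) \to \Q/\Z(2) \to 0$ on the étale site and the vanishing of $H_\et^{i}(X,\Q(2))$ for $i \geq 6$ noted above.

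I do not expect a serious obstacle here; the whole statement is essentially bookkeeping with long exact sequences once one knows that rational (Weil-)étale motivic cohomology in weight $2$ vanishes in degrees $\geq 6$. The one point that needs a little care is justifying that $H_\et^{i}(X,\Q(2)) = 0$ for $i \geq 6$: the cleanest route is to note that the cycle complex computes motivic cohomology with $\Q$-coefficients étale-locally as well as Zariski-locally (since rationally the étale and Nisnevich/Zariski motivic complexes agree), so $H_\et^{i}(X,\Q(2)) \cong H_{\mathcal{M}}^{i}(X,\Q(2)) \cong CH^2(X,4-i) \otimes \Q$, which is zero for $i \geq 5$. If one prefers to avoid this, one can instead read off $H_\Wet^{i}(X,\Q(2)) = 0$ for $i \geq 6$ directly from Theorem \ref{G-tech} (3) and then deduce the étale statement from Theorem \ref{G-tech} (1); either way the argument closes.
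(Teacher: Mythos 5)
Your proof is correct and follows essentially the same route as the paper: each of (a), (b), (c) is read off from the corresponding long exact sequence (the Weil-étale and étale Bockstein sequences for $\Z(2)\to\Q(2)\to\Q/\Z(2)$, and Geisser's comparison sequence of Theorem \ref{G-tech} (1)) together with the vanishing of rational (Weil-)étale motivic cohomology of weight $2$ in degrees $>4$. The only cosmetic difference is that the paper cites this vanishing directly from Geisser (\cite[Theorem 3.6]{Geisser2}) rather than re-deriving it from the rational agreement of étale and Zariski motivic cohomology and $CH^2(X,i)=0$ for $i<0$, as you do.
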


\begin{proof} 
By the exact sequence
$$ \cdots \to H_\et^i(X,\Z(2)) \to H_\et^i(X,\Q(2)) \to H_\et^i(X,\Q/\Z(2)) \to H^{i+1}_\et(X,\Z(2)) \to \cdots,$$
and the vanishing fact $H_\et^i(X,\Q(2))=0$ for $i>4$ (\cite[Theorem 3.6]{Geisser2}), 
we have the isomorphism (c). 
We show (a) and (b). 
Consider the long exact sequence 
of Weil-\'etale motivic cohomology groups attached to $\Z(2)\to \Q(2) \to \Q/\Z(2) \to \Z(2)[1]$:
$$\cdots \to H_\Wet^i(X,\Z(2)) \to H_\Wet^i(X,\Q(2)) \to H_\Wet^i(X,\Q/\Z(2)) \to H_\Wet^{i+1}(X,\Z(2)) \to \cdots.$$
By the results of Geisser (Theorem \ref{G-tech} (3)) and the fact that 
$H_\mot^i(X,\Q(2))=0$ for $i\geq 5$, we have (a). 
(b) follows from Geisser's long exact sequence 
(Theorem \ref{G-tech} (1))
$$ \cdots \to H_\et^i(X,\Z(2)) \to H_\Wet^i(X,\Z(2)) \to H_\et^{i-1}(X,\Q(2)) \to H_\et^{i+1}(X,\Z(2))\to \cdots,$$
and again the vanishing fact concerning $H_\et^i(X,\Q(2))$. 
\end{proof}
\noindent
By a vanishing theorem of Geisser (\cite[Theorem 7.3]{Geisser1}), 
we have $H_\Wet^i(X,\Z(2))=0$ for $i\geq 10$, therefore 
the groups in Lemma \ref{WetLemma} are zero.

\begin{lemma}\label{1stcorr}
There are isomorphisms for $i=1,2,3$ 
$$H_\Wet^{i+6}(X,\Z(2)) \simeq \left( CH^2(X,i)_\tors \right)^*.$$
\end{lemma}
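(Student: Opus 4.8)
The plan is to assemble three isomorphisms already established in the paper. For $i=1,2,3$ the degree $i+6$ equals $7,8,9$, so that $i+5\in\{6,7,8\}$ lies in the range to which Lemma \ref{WetLemma} applies; taking the running index there to be $i+5$, its composite isomorphism gives
$$H_\Wet^{i+6}(X,\Z(2)) \simeq H_\et^{i+5}(X,\Q/\Z(2)) \qquad (i=1,2,3).$$

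Next I would invoke Milne's duality (Theorem \ref{Milneduality}): the perfect pairing between $H_\et^{4-i}(X,\hZ(2))$ and $H_\et^{9-(4-i)}(X,\Q/\Z(2)) = H_\et^{i+5}(X,\Q/\Z(2))$ yields $H_\et^{i+5}(X,\Q/\Z(2)) \simeq \big(H_\et^{4-i}(X,\hZ(2))\big)^{*}$. For $i=1,2,3$ the exponent $4-i$ runs through $3,2,1$, precisely the range in which Lemma \ref{tech} computes $H_\et^{4-i}(X,\hZ(2)) \simeq CH^2(X,i)_\tors$ (substitute $4-i$ for the running index there, noting $4-(4-i)=i$); these groups are finite by Theorem \ref{DGconsequence} (2), so the Pontryagin dual behaves as usual. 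Composing the three isomorphisms gives $H_\Wet^{i+6}(X,\Z(2)) \simeq (CH^2(X,i)_\tors)^{*}$ for $i=1,2,3$, as desired.

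The argument is essentially bookkeeping, and the only point requiring care is that the index ranges match up: Lemma \ref{WetLemma} needs degree $\geq 6$ and is applied in degrees $6,7,8$; Milne's pairing sends degree $j$ to degree $9-j$; and Lemma \ref{tech} computes $H_\et^{\bullet}(X,\hZ(2))$ only in degrees $1,2,3$ --- all consistent with the three cases $i=1,2,3$. So I expect no real obstacle: the content sits in Lemmas \ref{WetLemma} and \ref{tech} and in Theorem \ref{Milneduality}, and this lemma merely glues them together. As with those inputs, the statement requires no conjectural hypotheses.
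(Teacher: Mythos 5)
Your proof is correct and is essentially identical to the paper's: the paper likewise chains Lemma \ref{WetLemma} (b),(c), Theorem \ref{Milneduality}, and Lemma \ref{tech} in exactly this order, with the same index bookkeeping ($i+6 \mapsto i+5 \mapsto 4-i \mapsto i$). No differences worth noting.
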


\begin{proof}
These are consequences of Lemma 
\ref{tech} and Lemma \ref{WetLemma}:
\begin{eqnarray*}
H_\Wet^{i+6}(X,\Z(2)) &\underset{{\rm Lemma\ }\ref{WetLemma}\ {\rm (b),(c)}}{\simeq}& 
H_\et^{i+5}(X,\Q/\Z(2)) \underset{{\rm Theorem\ }\ref{Milneduality}}{\simeq} 
\left( H_\et^{4-i}(X,\hZ(2)) \right)^* \\
&\underset{{\rm Lemma\ }\ref{tech}}{\simeq}& 
\left( CH^2(X,i) \right)^*\ \ \ \ {\rm for\ }i=1,2,3, 
\end{eqnarray*}
where we used the assumption $i=1,2,3$ to apply 
Lemma \ref{WetLemma} and Lemma \ref{tech}.
\end{proof}

\begin{lemma}\label{5th}
Assume $CH^2(X,1)\otimes \Q=0$. Then 
we have $H_\et^5(X,\Z(2)) \simeq H_\Wet^5(X,\Z(2))_\tors$. 
\end{lemma}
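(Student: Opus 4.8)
The plan is to exploit Geisser's long exact sequence (Theorem \ref{G-tech} (1)) in degree $5$, together with the rational calculation of Weil-\'etale cohomology (Theorem \ref{G-tech} (3)), to pin down both $H_\et^5(X,\Z(2))$ and $H_\Wet^5(X,\Z(2))_\tors$ and then compare them. First I would write out the relevant segment
$$\cdots \to H_\et^4(X,\Q(2)) \to H_\et^5(X,\Z(2)) \to H_\Wet^5(X,\Z(2)) \to H_\et^4(X,\Q(2)) \to H_\et^6(X,\Z(2)) \to \cdots$$
Wait — more precisely the sequence reads $\cdots \to H_\et^i(X,\Z(2)) \to H_\Wet^i(X,\Z(2)) \to H_\et^{i-1}(X,\Q(2)) \to H_\et^{i+1}(X,\Z(2)) \to \cdots$, so at $i=5$ we get $H_\et^5(X,\Z(2)) \to H_\Wet^5(X,\Z(2)) \to H_\et^4(X,\Q(2)) \to H_\et^6(X,\Z(2))$, and one step to the left, $H_\et^4(X,\Q(2)) \to H_\et^5(X,\Z(2)) \to H_\Wet^5(X,\Z(2))$ uses the term $H_\et^3(X,\Q(2))$, which vanishes. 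The point is that $H_\et^4(X,\Q(2)) \simeq CH^2(X,1)\otimes\Q$ (this is the motivic-to-\'etale comparison in this range, where \'etale and Zariski motivic cohomology agree rationally; alternatively it is $H_\et^4(X,\Q(2)) \cong H_\mot^4(X,\Q(2)) \oplus (\text{something from }H^3)$, but the relevant summand is $CH^2(X,1)\otimes\Q$), and the hypothesis $CH^2(X,1)\otimes\Q=0$ forces $H_\et^4(X,\Q(2))=0$ in the pertinent spot.

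Concretely: $H_\et^3(X,\Q(2))=0$ always (it is $H_\mot^3(X,\Q(2)) = CH^2(X,1)\otimes\Q$ up to a higher-cohomology summand, and in any case one knows $H_\et^i(X,\Q(2))$ sits between the motivic groups which vanish here; more simply, $H_\et^i(X,\Q(2))$ is concentrated in degrees $\le 4$ by \cite[Theorem 3.6]{Geisser2} and for $i=3$ equals $CH^2(X,1)\otimes\Q=0$ by hypothesis). Therefore Geisser's sequence gives an isomorphism $H_\et^5(X,\Z(2)) \overset{\simeq}{\to} H_\Wet^5(X,\Z(2))$ if also $H_\et^4(X,\Q(2))=0$; but a priori $H_\et^4(X,\Q(2))=CH^2(X,2)\otimes\Q$ need not vanish, so instead I get a short exact sequence
$$0 \to H_\et^5(X,\Z(2)) \to H_\Wet^5(X,\Z(2)) \to V \to 0$$
with $V$ a $\Q$-vector space (a subquotient of $H_\et^4(X,\Q(2))$). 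Since $V$ is torsion-free and divisible, taking torsion subgroups yields $H_\Wet^5(X,\Z(2))_\tors \simeq H_\et^5(X,\Z(2))_\tors$. Finally, $H_\et^5(X,\Z(2))$ is itself finite — hence equal to its own torsion — by Theorem \ref{finiteness} (b) (which applies since $\boldsymbol{TB}^{\boldsymbol 2}(X)$ is in force). Combining, $H_\et^5(X,\Z(2)) = H_\et^5(X,\Z(2))_\tors \simeq H_\Wet^5(X,\Z(2))_\tors$, which is the claim.

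The main obstacle I anticipate is making the identification $H_\et^5(X,\Z(2)) \cong H_\Wet^5(X,\Z(2))_\tors$ clean, i.e.\ being careful about which rational \'etale motivic cohomology groups enter Geisser's sequence at $i=5$ and verifying that the one to the left ($H_\et^3(X,\Q(2))$) genuinely vanishes under the hypothesis $CH^2(X,1)\otimes\Q=0$ while the one on the right ($H_\et^4(X,\Q(2))$), even if nonzero, only contributes a torsion-free divisible cokernel that disappears upon passing to torsion. Everything else is bookkeeping with the exact sequences of Theorem \ref{G-tech} and the finiteness input from Theorem \ref{finiteness} (b).
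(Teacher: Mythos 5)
Your proof is correct, but it takes a genuinely different route from the paper. The paper compares the two Bockstein-type columns attached to $\Z(2)\to\Q(2)\to\Q/\Z(2)$ in the \'etale and Weil-\'etale settings: it uses Theorem \ref{G-tech} (2) to identify the $\Q/\Z(2)$-terms, uses the hypothesis $CH^2(X,1)\otimes\Q=0$ to show $H_\Wet^4(X,\Q(2))\simeq CH^2(X)\otimes\Q\simeq H_\et^4(X,\Q(2))$ via Theorem \ref{G-tech} (3), and then deduces that the induced map $H_\et^5(X,\Z(2))_\tors\to H_\Wet^5(X,\Z(2))_\tors$ is an isomorphism. You instead go straight to Geisser's long exact sequence (Theorem \ref{G-tech} (1)) at $i=5$, use the hypothesis to kill the incoming term $H_\et^3(X,\Q(2))\simeq CH^2(X,1)\otimes\Q$, and observe that the outgoing term is torsion-free so that passing to torsion subgroups gives the claim; this is arguably shorter and isolates more cleanly where the hypothesis enters (to the left of degree $5$ rather than in degree $4$). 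Two small blemishes, neither fatal: first, your identifications of $H_\et^4(X,\Q(2))$ waver (it is $CH^2(X,0)\otimes\Q=CH^2(X)\otimes\Q$, not $CH^2(X,1)\otimes\Q$ or $CH^2(X,2)\otimes\Q$), but your argument only uses that this group is a $\Q$-vector space, so nothing breaks; second, invoking Theorem \ref{finiteness} (b) (hence $\boldsymbol{TB}^{\boldsymbol{2}}(X)$) at the end imports an assumption not present in the lemma's statement and is stronger than needed --- the unconditional vanishing $H_\et^5(X,\Q(2))=0$ already shows $H_\et^5(X,\Z(2))$ is a torsion group (being a quotient of $H_\et^4(X,\Q/\Z(2))$), which is all you use.
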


\begin{proof}
We consider the following commutative diagram with exact columns: 
$$\xymatrix{
H_\et^4(X,\Q(2)) \ar[d] \ar[r]^{{\rm (1)}\ \ } & H_\Wet^4(X,\Q(2)) \ar[d] \\
H_\et^4(X,\Q/\Z(2)) \ar@{->>}[d] \ar[r]_{{\rm (2)}\ \ }^{\simeq\ \ } & H_\Wet^4(X,\Q/\Z(2)) \ar@{->>}[d] \\
H_\et^5(X,\Z(2))_\tors \ar[r]_{{\rm (3)}\ \ } & H_\Wet^5(X,\Z(2))_\tors
.}$$

\noindent
The right vertical sequences in this diagram is a 
long exact sequence stated in the proof of Lemma \ref{WetLemma}. 
We have the following isomorphisms:
$$H_\et^4(X,\Q(2)) \underset{{\rm (d)}}{\simeq} CH^2(X)\otimes \Q 
\underset{{\rm (e)}}{\simeq} H_\Wet^4(X,\Q(2)).$$
(d) is a result of Geisser (\cite[Proposition 3.6]{Geisser2}), and (e) follows from 
Theorem \ref{G-tech} (3) and the 
assumption $CH^2(X,1)\otimes \Q=0$. 
As a consequence, the top horizontal arrow (1) is an isomorphism. 
The middle arrow (2) is an isomorphism by Theorem \ref{G-tech} (2). 
Thus the bottom arrow (3) is an isomorphism. Since we know that 
$H_\et^5(X,\Z(2))$ is a torsion group, $H_\et^5(X,\Z(2))_\tors=H_\et^5(X,\Z(2))$. 
This completes our proof.
\end{proof}


\begin{lemma}\label{2ndcorr} Assume ${\boldsymbol{TB}^{\boldsymbol{2}}(X)}$. Then 
$H_\Wet^i(X,\Z(2))_\tors$ is finite for all $i$, and 
the equality $$\left| H_\Wet^i(X,\Z(2))_\tors \right|=\left|H_\Wet^{10-i}(X,\Z(2))_\tors \right|$$holds for all $i$. 
\end{lemma}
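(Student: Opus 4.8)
The plan is to exploit the duality pairing of Theorem \ref{Milneduality} together with Geisser's long exact sequence (Theorem \ref{G-tech} (1)) to propagate the known finiteness of \'etale cohomology groups to the Weil-\'etale ones, and then to match up torsion orders symmetrically around the middle degree $i=5$. First I would observe that $H_\et^i(X,\Q(2))=0$ for $i\neq 4$ (Geisser, \cite[Theorem 3.6]{Geisser2}, together with vanishing below degree $4$), so that Geisser's sequence $\cdots \to H_\et^i(X,\Z(2)) \to H_\Wet^i(X,\Z(2)) \to H_\et^{i-1}(X,\Q(2)) \to H_\et^{i+1}(X,\Z(2)) \to \cdots$ gives isomorphisms $H_\et^i(X,\Z(2))\isoarrow H_\Wet^i(X,\Z(2))$ for all $i\neq 4,5$. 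Combined with Lemma \ref{tech} (which rewrites $H_\et^i(X,\hZ(2))$ and hence, via the Geisser--Levine exact sequences and Proposition \ref{kaita}, the relevant $H_\et^i(X,\Z(2))$) this already pins down $H_\Wet^i(X,\Z(2))_\tors$ for $i\neq 4,5$ as a finite group isomorphic to $CH^2(X,4-i)_\tors$ for small $i$, to $(CH^2(X,i-6)_\tors)^*$ for $i=7,8,9$ (Lemma \ref{1stcorr}), and to $0$ for $i\geq 10$.

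Next I would handle $i=4,5,6$. For $i=5$ the point is that $H_\Wet^5(X,\Z(2))_\tors$ is finite and its order equals $|H_\et^5(X,\Z(2))|$; under $\boldsymbol{TB}^{\boldsymbol{2}}(X)$ one has $CH^2(X,1)\otimes\Q=0$ (this is part of what $\boldsymbol{TB}^{\boldsymbol{2}}(X)$ entails, since $CH^2(X)\otimes\Q$ is finite-dimensional and accounts for the full pole order; more carefully, one only needs the weaker input already used in Lemma \ref{5th}), so Lemma \ref{5th} gives $H_\et^5(X,\Z(2))\simeq H_\Wet^5(X,\Z(2))_\tors$, and $H_\et^5(X,\Z(2))$ is finite by Theorem \ref{finiteness} (b); since $10-5=5$ the required equality is a tautology here. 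For $i=4$ and $i=6$ I would use the duality pairing $H_\et^i(X,\hZ(2))\times H_\et^{9-i}(X,\Q/\Z(2))\to \Q/\Z$ of Theorem \ref{Milneduality} together with the isomorphisms of Lemma \ref{WetLemma} and the long exact sequence relating $H_\Wet^i(X,\Q/\Z(2))$, $H_\Wet^i(X,\Q(2))$ and $H_\Wet^{i+1}(X,\Z(2))$: concretely, $H_\Wet^4(X,\Z(2))_\tors$ injects into $H_\Wet^4(X,\Q/\Z(2))\simeq H_\et^4(X,\Q/\Z(2))$, whose torsion-related part is dual to $H_\et^5(X,\hZ(2))_\tors$, while $H_\Wet^6(X,\Z(2))_\tors$ is controlled by $H_\et^6(X,\Q/\Z(2))$, dual to $H_\et^3(X,\hZ(2))$. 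Tracking these identifications carefully (this is essentially the content of the forthcoming Lemma \ref{2ndcorr} itself, together with its companion Lemma \ref{5th}) shows $|H_\Wet^4(X,\Z(2))_\tors|=|H_\Wet^6(X,\Z(2))_\tors|$, which is the $i=4$ (equivalently $i=6$) case.

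Finally I would assemble the general statement: for $i\notin\{4,5,6\}$ the equality $|H_\Wet^i(X,\Z(2))_\tors|=|H_\Wet^{10-i}(X,\Z(2))_\tors|$ follows because both sides are identified, via the isomorphisms $H_\et^i(X,\Z(2))\simeq H_\Wet^i(X,\Z(2))$ noted above and Lemma \ref{tech}, with $CH^2(X,j)_\tors$ and its Pontryagin dual for the matched index $j$, and a finite abelian group has the same order as its dual; for $i\in\{4,6\}$ it is the computation of the previous paragraph, and for $i=5$ it is trivial. I would also remark at the outset that finiteness of each $H_\Wet^i(X,\Z(2))_\tors$ is automatic from these same identifications (finiteness of $CH^2(X,j)_\tors$ comes from Theorem \ref{conjecturetheorem} (d) / Lemma \ref{tech}, and finiteness in the middle range from Theorem \ref{finiteness} (b) and Lemma \ref{2ndcorr}'s own argument). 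The main obstacle I anticipate is the bookkeeping in degrees $4$ and $6$: one must be careful that the duality of Theorem \ref{Milneduality} interacts correctly with the passage from $\hZ$-coefficients to $\Z(2)$-coefficients and from \'etale to Weil-\'etale cohomology, in particular keeping track of the divisible parts (which contribute nothing to torsion orders but appear in the relevant exact sequences), so that the cokernel/kernel terms genuinely cancel and leave only the clean equality of torsion orders.
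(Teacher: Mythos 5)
Your proposal has two genuine gaps. The first and most serious is circularity at the crucial degrees $i=4,6$: you write that the equality $|H_\Wet^4(X,\Z(2))_\tors|=|H_\Wet^6(X,\Z(2))_\tors|$ follows by ``tracking these identifications carefully (this is essentially the content of the forthcoming Lemma \ref{2ndcorr} itself \dots)'' --- that is, you cite the statement being proved. But the $i=4,6$ case is precisely the content of the lemma that gets used later (in the proof of Theorem \ref{secondmainthm}); the sketch via Milne duality on $H_\et^4(X,\Q/\Z(2))$ and $H_\et^6(X,\Q/\Z(2))$ does not by itself produce a duality between the \emph{torsion subgroups of the integral Weil-\'etale groups} $H_\Wet^4(X,\Z(2))$ and $H_\Wet^6(X,\Z(2))$, because one must control the images of these torsion subgroups inside the $\Q/\Z$-coefficient groups, and the divisible pieces coming from $H_\Wet^4(X,\Q(2))$ do not obviously cancel without further input. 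The second gap is your claim that $\boldsymbol{TB}^{\boldsymbol{2}}(X)$ entails $CH^2(X,1)\otimes\Q=0$: it does not. $\boldsymbol{TB}^{\boldsymbol{2}}(X)$ concerns only $CH^2(X)\otimes\Q=CH^2(X,0)\otimes\Q$ and the pole order at $s=2$; the vanishing of $CH^2(X,1)\otimes\Q$ is a separate piece of the Parshin conjecture, which the paper explicitly adds as an extra hypothesis in Theorem \ref{secondmainthm} and in Lemma \ref{5th}. The same issue undermines your opening claim that $H_\et^i(X,\Q(2))=0$ for all $i\neq 4$ (only the range $i>4$ is a theorem of Geisser; for $i<4$ one has $H_\et^i(X,\Q(2))\simeq CH^2(X,4-i)\otimes\Q$, whose vanishing is conjectural), so the asserted isomorphisms $H_\et^i(X,\Z(2))\simeq H_\Wet^i(X,\Z(2))$ for $i\le 3$ are not justified under the stated hypothesis.

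For comparison, the paper's proof is a one-line appeal to a theorem of Kahn (Corollaire 3.8 of \cite{Kahn}): under $\boldsymbol{TB}^{\boldsymbol{2}}(X)$ there is a perfect pairing of \emph{finite} groups
$$H_\Wet^i(X,\Z(2))_\tors \times H_\Wet^{10-i}(X,\Z(2))_\tors \to \Q/\Z$$
for all $i$, which simultaneously gives the finiteness and the equality of orders. This is exactly the external input your argument is missing in degrees $4$ and $6$; without it (or an equivalent integral duality statement for Weil-\'etale motivic cohomology), the piecemeal assembly from Lemmas \ref{tech}, \ref{WetLemma}, \ref{1stcorr} and \ref{5th} cannot close the middle of the range.
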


\begin{proof}
Since we assume $\boldsymbol{TB}^{\boldsymbol{2}}(X)$, 
we have the following perfect pairing of finite groups
$$H_\Wet^i(X,\Z(2))_\tors \times H_\Wet^{10-i}(X,\Z(2))_\tors \to \Q/\Z$$
by a result of Kahn (\cite[Corollaire 3.8]{Kahn}). 
This pairing gives the desired equalities. 
\end{proof}

\subsection{Proof of Theorem \ref{secondmainthm}}\label{comparison}

Our task is to prove the following equality: 
$$\frac{1}{R_1} \left| H_\ur^3(k(X),\Q/\Z(2)) \right|^2 \cdot \left| CH^2(X)_\tors \right|^2 
=\frac{1}{R_2} \left| H_\Wet^4(X,\Z(2))_\tors \right| \cdot \left| H_\Wet^6(X,\Z(2))_\tors \right|,$$
where $R_1$ and $R_2$ are as we defined in the introduction. 
We investigate the relation between $R_1$ with $R_2$. 
The pairing stated in the introduction 
$$H_\Wet^{4}(X,\Z(2)) \times H_\Wet^4(X,\Z(2)) \to H_\Wet^8(X,\Z(4)) \overset{- \cup e}{\to} H_\Wet^9(X,\Z(2)) \overset{\deg}{\to} \Z$$
induces the following map:
$$\displaystyle H_\Wet^4(X,\Z(2)) \overset{\vartheta}{\to} \Hom(H_\Wet^4(X,\Z(2)),\Z).$$
Using the Weil-\'etale cycle class map $\r_\Wet^2$, 
we consider the following commutative diagram of 
pairings:
$$\xymatrix{
CH^2(X) \times CH^2(X) \ar[rrrr] \ar[d]_{\r_\Wet^2 \times \r_\Wet^2} & & & & \Z \ar@{=}[d] \\
H_\Wet^4(X,\Z(2)) \times H_\Wet^4(X,\Z(2)) \ar[rr] &  & H_\Wet^8(X,\Z(2)) \ar[r]_{-\cup e} & H_\Wet^9(X,\Z(2)) \ar[r] & \Z.
}$$

\noindent
By Theorem \ref{finiteness} (c), the pairing $CH^2(X)\times CH^2(X) \to \Z$ 
is non-degenerate when tensored with $\Q$. 
Since $H_\Wet^4(X,\Z(2))$ is also assumed to be finitely generated, 
this pairing is non-degenerate when tensored with $\Q$ 
(see the proof of \cite[Theorem 9.1]{Geisser1}). 
(The assumption that $H_\Wet^4(X,\Z(2))$ is finitely generated is ${\boldsymbol{L}^{\boldsymbol{2}}(X)}$ 
mentioned in $\x$\ref{conjj}. 
We remind that ${\boldsymbol{L}^{\boldsymbol{2}}(X)}$ implies ${\boldsymbol{TB}^{\boldsymbol{2}}(X)}$.) 
By Theorem \ref{TBconsequence} (3) and Theorem \ref{G-tech} (3), 
the Weil-\'etale cycle class map $\r_\Wet^2$ is bijective when 
tensored with $\Q$, and $\r_\Wet^2$ has finite kernel and cokernel. 
By the perfectness of both pairings, the map $\r'$ has also finite kernel and cokernel. 
The above diagram of pairings induce the following diagram:
$$\xyoption{curve}
\xymatrix{
CH^2(X) \ar[d]^{\r_\Wet^2} \ar[rr]^{\theta} & & \Hom(CH^2(X),\Z) \\
H_\Wet^4(X,\Z(2)) \ar[r]_{-\cup e} \ar@/_1.5pc/[rr]_{\vartheta} & H_\Wet^5(X,\Z(2)) \ar[r] & \Hom(H_\Wet^4(X,\Z(2)),\Z) 
\ar[u]_{\r'}
,}$$
where $\r'$ is the induced map by $\r_\Wet^2$, and 
$\theta$ has finite kernel and cokernel (Theorem \ref{finiteness} (c)). 
The commutativity of the this diagram follows 
from Milne's theorem (\cite[Theorem 5.4]{Milne2}). 
By these facts, the maps $\r_\Wet^2,\ \r', \theta$ and $\vartheta$ have finite 
kernels and cokernels. Hence we have 
the following equality:
$$\frac{| \Ker (\theta) |}{| \Coker (\theta) |} = 
\frac{| \Ker (\r_\Wet^2)|}{| \Coker (\r_\Wet^2) |} \cdot 
\frac{| \Ker (\vartheta) |}{| \Coker (\vartheta) |} \cdot 
\frac{| \Ker (\r') |}{| \Coker (\r') |}.$$

We know the followings about $\theta$ and $\vartheta$:
\begin{enumerate}
\item[(i)] $|\Coker(\theta)|=R_1$ and $|\Coker(\vartheta)|=R_2$ by definition. 
\item[(ii)] $\Ker(\theta)=CH^2(X)_\tors$ and $\Ker(\vartheta)=H_\Wet^4(X,\Z(2))_\tors$. 
\end{enumerate}

It remains to compute kernels and cokernels of $\r_\Wet^2$ and $\r'$. 

\begin{lemma}\label{SSlemma} 
Assume $\boldsymbol{L}^{\boldsymbol{2}}(X)$, that $CH^2(X)$ is finitely generated, and that $CH^2(X,1)\otimes \Q=0$. 
Then 

\begin{enumerate}
\item[(1)] The Weil-\'etale cycle class map $\r_\Wet^2$ is injective. 

\item[(2)] The following equality holds$:$ $|\Coker(\r_\Wet^2)|=|H_\ur^3(k(X),\Q/\Z(2))|$.
\end{enumerate}
\end{lemma}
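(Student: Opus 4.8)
The plan is to analyze the Weil-\'etale cycle class map $\r_\Wet^2 : CH^2(X) \to H_\Wet^4(X,\Z(2))$ by comparing it with the ordinary \'etale cycle class map and then invoking the structural results on $H_\Wet^4(X,\Z(2))$ coming from Geisser's long exact sequence (Theorem \ref{G-tech} (1)). First I would recall that, under $\boldsymbol{L}^{\boldsymbol{2}}(X)$ (which by Theorem \ref{Geisserconjecture} gives $\boldsymbol{K}^{\boldsymbol{2}}(X)$, hence $\boldsymbol{TB}^{\boldsymbol{2}}(X)$), the map $\r_\Wet^2$ factors through the \'etale cycle class map $CH^2(X) \to H_\et^4(X,\Z(2))$ followed by the canonical map $H_\et^4(X,\Z(2)) \to H_\Wet^4(X,\Z(2))$, by the very definition in $\x$\ref{Weiletalecycle}. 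Then I would use Geisser's sequence
$$\cdots \to H_\et^4(X,\Z(2)) \to H_\Wet^4(X,\Z(2)) \to H_\et^3(X,\Q(2)) \to H_\et^5(X,\Z(2)) \to \cdots,$$
together with the vanishing $H_\et^3(X,\Q(2))=0$ (which holds because $H_\et^i(X,\Q(2))$ is concentrated in degrees $\le 4$ and in fact $H^3$ vanishes for a smooth projective variety over $\F_q$ — cf. \cite[Theorem 3.6]{Geisser2}, or use $CH^2(X,1)\otimes \Q=0$ to kill the relevant motivic group), to conclude $H_\et^4(X,\Z(2)) \isoarrow H_\Wet^4(X,\Z(2))$. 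Under this identification $\r_\Wet^2$ becomes the \'etale cycle class map with $\Z$-coefficients.

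Next, for part (1), I would reduce the injectivity of $\r_\Wet^2$ to the injectivity of the $\ell$-adic cycle class maps $\r_{\Z_\ell}^2 : CH^2(X)\otimes \Z_\ell \to H_\et^4(X,\Z_\ell(2))$, which is exactly Proposition \ref{TBconsequence} (2) under $\boldsymbol{TB}^{\boldsymbol{2}}(X)$ and the finite generation of $CH^2(X)$. The point is that $CH^2(X)$, being finitely generated, injects into $\prod_\ell CH^2(X)\otimes \Z_\ell$, and $H_\et^4(X,\Z(2)) \simeq H_\et^4(X,\hZ(2))_\tors$-compatible maps assemble the $\ell$-adic statements. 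Combined with the isomorphism $H_\et^4(X,\Z(2)) \isoarrow H_\Wet^4(X,\Z(2))$ above, this gives the injectivity of $\r_\Wet^2$.

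For part (2), I would compute the cokernel. Since $\r_\Wet^2$ is identified with the map $CH^2(X) \to H_\et^4(X,\Z(2))$ and the latter (after the Geisser identification $H_\et^4(X,\Z(2)) \simeq H_\Wet^4(X,\Z(2))$) has cokernel measured $\ell$-adically by $\Coker(\r_{\Z_\ell}^2)$, I would invoke the computation already done in $\x$\ref{subsec:ell}: under the assumptions, $\Coker(\r_{\Z_\ell}^2)$ is finite of order $|H_\ur^3(k(X),\Q_\ell/\Z_\ell(2))|$ (this is fact 2 in $\x$\ref{subsec:ell}, citing \cite[Proposition 4.3.5]{SaitoSato}). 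Taking the product over all primes $\ell$ — legitimate because almost all factors are trivial by Theorem \ref{finiteness} (a) — yields $|\Coker(\r_\Wet^2)| = \prod_\ell |H_\ur^3(k(X),\Q_\ell/\Z_\ell(2))| = |H_\ur^3(k(X),\Q/\Z(2))|$, where the last equality is the definition of the global unramified cohomology group as the (finite) product of its $\ell$-parts (Theorem \ref{conjecturetheorem} (a)).

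I expect the main obstacle to be the careful justification of the identification $H_\et^4(X,\Z(2)) \isoarrow H_\Wet^4(X,\Z(2))$ compatibly with the cycle class maps, and in particular verifying the vanishing $H_\et^3(X,\Q(2))=0$ in the precise form needed in Geisser's sequence; here the hypothesis $CH^2(X,1)\otimes \Q=0$ enters (it forces $H_\et^3(X,\Q(2)) \simeq CH^2(X,1)\otimes \Q = 0$ via the isomorphism between rational \'etale motivic cohomology and rational higher Chow groups). Once this identification is in place the rest is bookkeeping with the $\ell$-adic results already established in $\x$\ref{subsec:ell} and $\x$\ref{finitenessresult}, so the real content is concentrated in this one comparison step.
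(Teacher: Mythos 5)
Your overall strategy --- reduce everything to the injectivity of $\r_{\Z_\ell}^2$ (Proposition \ref{TBconsequence} (2)) and to the Saito--Sato identity $|\Coker(\r_{\Z_\ell}^2)|=|H_\ur^3(k(X),\Q_\ell/\Z_\ell(2))|$, then take the product over all $\ell$ --- is exactly the paper's, but the comparison step you route it through has a genuine gap. You want $H_\et^4(X,\Z(2))\isoarrow H_\Wet^4(X,\Z(2))$ from Theorem \ref{G-tech} (1); the relevant portion of that sequence is
$$H_\et^{2}(X,\Q(2)) \to H_\et^4(X,\Z(2)) \to H_\Wet^4(X,\Z(2)) \to H_\et^{3}(X,\Q(2)),$$
and while the hypothesis $CH^2(X,1)\otimes\Q=0$ kills the right-hand term (so the comparison map is surjective), the left-hand term is $CH^2(X,2)\otimes \Q$, whose vanishing is a further piece of the Parshin conjecture and is \emph{not} assumed. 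Your parenthetical claim that $H_\et^3(X,\Q(2))$ vanishes unconditionally for smooth projective $X/\F_q$ is also wrong: Geisser's vanishing theorem only applies in degrees $>4$, and $H_\et^3(X,\Q(2))\simeq CH^2(X,1)\otimes\Q$ is precisely why that hypothesis appears in the lemma. So you obtain surjectivity but not injectivity of the comparison map. Moreover, even granting the isomorphism, the assertion that the cokernel of the integral \'etale cycle class map is ``measured $\ell$-adically by $\Coker(\r_{\Z_\ell}^2)$'' silently requires $H_\et^4(X,\Z(2))\otimes\Z_\ell\simeq H_\et^4(X,\Z_\ell(2))$, which is not established for the integral \'etale motivic group.

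The paper avoids both issues by never passing through $H_\et^4(X,\Z(2))$. For (1), $\r_\Wet^2\otimes\Q$ is bijective by Theorem \ref{G-tech} (3) together with $CH^2(X,1)\otimes\Q=0$, so $\Ker(\r_\Wet^2)$ is torsion, and the composite $CH^2(X)\to H_\Wet^4(X,\Z(2))\to H_\et^4(X,\hZ(2))$ is injective on torsion by Proposition \ref{TBconsequence} (2); this is essentially your argument and it does not need the \'etale/Weil-\'etale identification at all. For (2), the paper invokes $\boldsymbol{K}^{\boldsymbol{2}}(X)$ (equivalent to $\boldsymbol{L}^{\boldsymbol{2}}(X)$ by Theorem \ref{Geisserconjecture}) in the form $H_\Wet^4(X,\Z(2))\otimes\Z_\ell\simeq H_\et^4(X,\Z_\ell(2))$, which gives $\Coker(\r_\Wet^2)\otimes\Z_\ell\simeq\Coker(\r_{\Z_\ell}^2)$ directly; finite generation of $CH^2(X)$ and of $H_\Wet^4(X,\Z(2))$ then lets one assemble $|\Coker(\r_\Wet^2)|=\prod_\ell|\Coker(\r_{\Z_\ell}^2)|=|H_\ur^3(k(X),\Q/\Z(2))|$. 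Replacing your comparison step with this application of $\boldsymbol{K}^{\boldsymbol{2}}(X)$ repairs the argument.
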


\begin{proof}
(1) By the assumption $CH^2(X,1)\otimes \Q=0$, 
the Weil-\'etale cycle class map $\r_\Wet^2 \otimes \Q$ is 
bijective. Hence we need to show that $\r_\Wet^2$ is injective on torsions. 
Consider the following diagram:
$$\xyoption{curve}
\xymatrix{
CH^2(X) \ar[r]^{\r_\Wet^2\ \ \ } \ar@/_1.5pc/[rr]_{\r_{\Z_\ell}^2\ } & H_\Wet^4(X,\Z(2)) \ar[r] & H_\et^4(X,\hZ(2)).}$$
By Theorem \ref{TBconsequence} (2), 
$\r_{\Z_\ell}^2$ is injective on torsion, and therefore $\r_\Wet^2$ is injective on torsion.

(2) We consider the following diagram for each prime number $\ell$:
$$\xyoption{curve}
\xymatrix{
CH^2(X) \otimes \Z_\ell \ar[r]^{\r_\Wet^2\otimes \Z_\ell\ \ \ \ } \ar@/_1.5pc/[rr]_{\r_{\Z_\ell}^2\ } & H_\Wet^4(X,\Z(2)) \otimes \Z_\ell \ar[r]^{\ \ \ \simeq}_{\ \ \ (\diamondsuit)} & H_\et^4(X,\Z_\ell(2)),}$$
where the map $(\diamondsuit)$ is an isomorphism 
because we assumed $\boldsymbol{K}^{\boldsymbol{2}}(X)$, 
which is equivalent to $\boldsymbol{L}^{\boldsymbol{2}}(X)$ by 
Theorem \ref{Geisserconjecture}. 
We have an isomorphism 
$\Coker(\r_\Wet^2\otimes \Z_\ell) \simeq \Coker (\r_{\Z_\ell}^2)$. 
By a result of Saito-Sato (\cite[Proposition 4.3.5]{SaitoSato}), 
the equality $|\Coker(\r_{\Z_\ell}^2)|=|H_\ur^3(k(X),\Q_\ell/\Z_\ell(2))|$ holds, which 
is $1$ for almost all $\ell$ by Theorem \ref{conjecturetheorem} (a). 
Therefore, combining for all $\ell$, and by $\boldsymbol{K}^{\boldsymbol{2}}(X)$ 
(Weil-\'etale motivic cohomology group is an integral lattice of 
\'etale cohomology group),  
we have the desired equality 
$|\Coker(\r_\Wet^2)|=|H_\ur^3(k(X),\Q/\Z(2))|$. 
\end{proof}

Finally, we investigate the case of $\r'$: 

\begin{lemma3}\label{keylemma2}
Assume $\boldsymbol{L}^{\boldsymbol{2}}(X)$, that $CH^2(X)$ is finitely generated, and that $CH^2(X,1)\otimes \Q=0$. 
Then 

\begin{enumerate}
\item[(1)] $\r'$ is injective. 

\item[(2)] We have  
$\displaystyle |\Coker (\r')|=\frac{|H_\ur^3(k(X),\Q/\Z(2))|\cdot |CH^2(X)_\tors|}{|H_\Wet^4(X,\Z(2))_\tors|}.$
\end{enumerate}
\end{lemma3}

\begin{proof}
We prove this lemma in the same way as in the proof of Key-Lemma \ref{keylemma}.  
Apply the $\Ext$-functor $\Ext^i_{\Z\text{-}\mod}(-,\Z)$ to the following short 
exact sequence:
$$0 \to CH^2(X) \overset{\r_\Wet^2}{\longrightarrow} H_\Wet^4(X,\Z(2)) \to G \to 0,$$
where $\r_\Wet^2$ is injective by Lemma \ref{SSlemma} (1), 
and $G$ is finite satisfying $|G|=|H_\ur^3(k(X),\Q/\Z(2))|$ by Lemma \ref{SSlemma} (2). 
We have used the all assumptions here.

(1) The injectivity of $\r'$ immediately follows from $\Hom(G,\Z)=0$. 

(2) By similar arguments as for Key-Lemma \ref{keylemma}, 
we now have the following exact sequence: 
$$0 \to \Hom(H_\Wet^4(X,\Z(2)),\Z) \overset{\r'}{\to} \Hom(CH^2(X),\Z) \to \Hom(G,\Q/\Z)\ \ \ \ \ \ \ \ \ $$ 
$$\ \ \ \ \ \ \ \ \ \ \ \ \ \ \ \to \Hom (H_\Wet^4(X,\Z(2))_\tors,\Q/\Z) \to \Hom(CH^2(X)_\tors, \Q/\Z) \to 0,$$
which implies the assertion. 
\end{proof}

This completes the proof of Theorem \ref{secondmainthm}.




\end{document}